\definecolor{forestgreen}{rgb}{0.13, 0.85, 0.15}
\newcommand{\dx}{{\mathrm{d}x}}
\newcommand{\dy}{{\mathrm{d}y}}
\newcommand{\dxy}{\frac{\mathrm{d}x}{\abs{x}^a}\frac{\mathrm{d}y}{\abs{y}^a}}
\newcommand{\D}{{\mathcal{D}}}
\newcommand{\leqnomode}{\tagsleft@true\let\veqno\@@leqno}
\newtheorem{definition}{Definition}[section]
\newtheorem{theorem}[definition]{Theorem}
\newtheorem{example}[definition]{Example}
\newtheorem{remark}[definition]{Remark}
\newtheorem{proposition}[definition]{Proposition}
\newtheorem{lemma}[definition]{Lemma}
\numberwithin{equation}{section}
\DeclarePairedDelimiter\abs{\lvert}{\rvert}
\DeclarePairedDelimiter\norm{\lVert}{\rVert}
\let\oldnorm\norm
\def\norm{\@ifstar{\oldnorm}{\oldnorm*}}
\newcommand{\al} {\alpha}
\newcommand{\pa} {\partial}
\newcommand{\la} {\lambda}
\newcommand{\Gr} {\nabla}
\newcommand{\no} {\nonumber}
\newcommand{\noi} {\noindent}
\newcommand{\ra} {\rightarrow}
\DeclareMathAlphabet{\mathpzc}{T1}{pzc}{m}{it}
\def\ps{p_{s}^{*}}
\def\psig{p_{\sigma}^{*}}
\def\Wsp{{\mathring{W}^{s,p}_a(\R^N)}}
\def\Dsp{{{\mathcal D}^{s,p}_a(\R^N)}}
\def\R{{\mathbb R}}
\def\N{{\mathbb N}}
\def\ep{{\epsilon}}
\def\dx{{\rm d}x}
\def\dz{{\rm d}z}
\def\dt{{\rm d}t}
\def\cc{{\mathcal{C}_c^{\infty}}}
\def\C{{\mathcal{C}}}
\def\del{{\partial}}
\def\Lpastar{{L^{\ps}_{a}(\R^N)}}
\def\Lasig{{L^{\psig}_{a}(\R^N)}}
\def\delpas{{\left(-\Delta\right)_{p,a}^s}}
\title[Higher order fractional weighted homogeneous spaces]{Higher order fractional weighted homogeneous spaces: characterization and finer embeddings}
\author[N. Biswas and R. Kumar]{Nirjan Biswas$^1$\orcidlink{0000-0002-3528-8388} \and Rohit Kumar$^2$\orcidlink{0009-0001-6494-6407}} 
\email{nirjaniitm@gmail.com, nirjan.biswas@acads.iiserpune.ac.in, kumar.174@iitj.ac.in}  
\thanks{$^1$Corresponding author}
\subjclass[2020]{46E35 $\cdot$ 46E30}
\keywords{higher order fractional weighted Sobolev spaces, density of smooth functions with compact support, equivalent norms, Lorentz-Sobolev embeddings.}
\begin{document}
\vspace*{-5em}
\noindent\rule{\textwidth}{0.2pt} 
\vspace{-1.5em}
\begin{center}
    \textbf{DOI:} \href{https://doi.org/10.1016/j.jmaa.2024.128935}{https://doi.org/10.1016/j.jmaa.2024.128935}\\ 
    \textbf{Published in:} Journal of Mathematical Analysis and Applications (2024)\\[-1ex] 
\end{center}
\vspace{-1em}
\noindent\rule{\textwidth}{0.2pt} 
\vspace{2em}
\maketitle
\centerline{$^1$Department of Mathematics, Indian Institute of Science Education and Research Pune,}
\centerline{Dr. Homi Bhabha Road, Pune 411008, India}
\centerline{$^2$ Department of Mathematics, Indian Institute of Technology Jodhpur,}
\centerline{Rajasthan 342030, India}

\begin{abstract}
In this article, for $N \geq 2, s \in (1,2), p\in (1, \frac{N}{s}), \sigma=s-1 $ and $a \in [0, \frac{N-sp}{2})$, we establish an isometric isomorphism between the higher order fractional weighted Beppo-Levi space
\begin{align*}
    {\mathcal D}^{s,p}_a(\mathbb{R}^N) := \overline{\cc(\mathbb{R}^N)}^{[\cdot]_{s,p,a}} \text{ where } [u]_{s,p,a} := \left( \iint_{\mathbb{R}^N \times \mathbb{R}^N} \frac{\left| \nabla u(x) -\nabla u (y) \right|^p}{\left|x-y \right|^{N+\sigma p}}\, \dxy \right)^{\frac{1}{p}},
\end{align*}
and higher order fractional weighted homogeneous space
\begin{align*}
    \mathring{W}^{s,p}_a(\mathbb{R}^N):= \left\{u \in L_a^{p^*_s}(\mathbb{R}^N): \norm{\Gr u}_{L_a^{p^*_{\sigma}}(\mathbb{R}^N)} + [u]_{s,p,a} < \infty \right\} 
\end{align*}
with the weighted Lebesgue norm
\begin{align*}
    \norm{u}_{L_a^{p^*_{\alpha}}(\mathbb{R}^N)}:=  \left( \int_{\mathbb{R}^N} \frac{ \abs{u(x)}^{p^*_{\alpha}}}{\abs{x}^{\frac{2ap^*_{\alpha}}{p}}} \, \dx \right)^{\frac{1}{p^*_{\alpha}}}, \text{ where } p^*_{\alpha}=\frac{Np}{N-\alpha p} \text{ for } \alpha= s,\sigma. 
\end{align*}
To achieve this, we prove that $\mathcal{C}_c^{\infty}(\mathbb{R}^N)$ is dense in $\mathring{W}^{s,p}_a(\mathbb{R}^N)$ with respect to $[\cdot]_{s,p,a}$, and $[\cdot]_{s,p,a}$ is an equivalent norm on $\mathring{W}^{s,p}_a(\mathbb{R}^N)$. Further, we obtain a finer embedding of ${\mathcal D}^{s,p}_a(\mathbb{R}^N)$ into the Lorentz space $L^{\frac{Np}{N-sp-2a}, p}(\mathbb{R}^N)$, where $L^{\frac{Np}{N-sp-2a}, p}(\mathbb{R}^N) \subsetneq L_a^{p^*_s}(\mathbb{R}^N)$. 
\end{abstract} 
\maketitle

\section{Introduction}
\noindent 
For $a \geq 0, s=1+\sigma, \sigma \in (0,1), p \in (1, \infty)$ and $N \ge 2$, we define the higher order fractional weighted-$p$ Laplace operator of order $s$, up to a normalizing constant as 
\begin{align*}
    &\langle \delpas u, \varphi \rangle = \iint_{\R^N \times \R^N} \frac{\left| \Gr u(x)-\Gr u(y)\right|^{p-2}\left(\Gr u(x)-\Gr u(y) \right) \cdot \left(\Gr \varphi(x)-\Gr \varphi(y)\right)}{\abs{x-y}^{N+\sigma p}}\,\dxy,
\end{align*}
for $u, \varphi \in \cc(\R^N)$. We define the following Gagliardo seminorm 
\begin{align*}
    [u]_{s,p,a} := \left( \iint_{\R^N \times \R^N} \frac{\left|\nabla u(x) -\nabla u (y) \right|^p}{\left|x-y \right|^{N+\sigma p}}\, \dxy \right)^{\frac{1}{p}}.
\end{align*}
For $a>0$, we consider the higher order fractional weighted Sobolev space $W_a^{s,p}(\R^N)$ as
\begin{align}
   W_a^{s,p}(\R^N):= \left\{ u \text{ is measurable} : \norm{u}_{p,a} + \norm{\Gr u}_{p,a} + [u]_{s,p,a} < \infty \right\},
\end{align}
where
\begin{align*}
   \norm{u}_{p,a}:= \left(\int_{\R^N} \abs{u(x)}^p \frac{\dx}{\abs{x}^a} \right)^{\frac{1}{p}} \text{ and } \norm{\Gr u}_{p,a}:= \left(\int_{\R^N} \abs{\Gr u(x)}^p \frac{\dx}{\abs{x}^a} \right)^{\frac{1}{p}}.
\end{align*}
For $a=0$, the corresponding higher order fractional Sobolev space $W_0^{s,p}(\R^N)$ is defined in \cite[Chapter 11]{Giovanni_book}. We refer to \cite{Giovanni_book} for more details on this space. Now we consider the following non-local elliptic problem
\begin{align}\label{nonlocal problem}
    \delpas u= f, \; \text{ in } \R^N,
\end{align}
under suitable assumptions on the source term $f$. For the existence of a weak solution to \eqref{nonlocal problem}, an obvious choice is to apply the Direct Method in the Calculus of Variations. Consequently, the problem reduces to minimizing the associated energy functional $\mathcal{J}$ given by
\begin{align}\label{functional}
    \mathcal{J}(u)= \frac{1}{p}\iint_{\R^N \times \R^N} \frac{\left|\nabla u(x) -\nabla u (y) \right|^p}{\left|x-y \right|^{N+\sigma p}}\, \dxy - \int_{\R^N} f(x)u(x) \, \dx.
\end{align}
Suppose, we consider the solution space for \eqref{nonlocal problem} as $W_a^{s,p}(\R^N)$ where $f \not\equiv 0$ and $f \in (W_a^{s,p}(\R^N))^*$ (dual space of $W_a^{s,p}(\R^N)$). Observe that one can not have any control over $\norm{\cdot}_{p, a}$ of minimizing sequences because the following weighted Poincar\'{e} type inequality
\begin{align}\label{Poincare}
    \int_{\R^N} \abs{u(x)}^p \, \frac{\dx}{\abs{x}^a} \leq C [u]_{s,p,a}^p, \; \forall \, u \in W_a^{s,p}(\R^N),
\end{align}
does not hold for any $C>0$. Suppose \eqref{Poincare} holds for some $C_1>0$. For $\la>0$ taking $u_{\la}(x) = u(\la x)$, we get
\begin{align*}
    [u_{\la}]_{s,p,a}^p = \la^{2a-N+(\sigma+1)p} [u]_{s,p,a}^p \text{ and } \norm{ u_{\la} }_{p,a}^p = \la^{a-N} \norm{u}^p_{p,a},    
\end{align*}
which in turn implies 
\begin{align}\label{poin1}
    \int_{\R^N} \abs{u(x)}^p \, \frac{\dx}{\abs{x}^a} \leq C_1 \la^{a+(\sigma+1)p} [u]_{s,p,a}^p, \text{ for every } \la >0.
\end{align}
Dividing \eqref{poin1} by $\la^{a+(\sigma+1)p}$ and taking $\la \ra 0$ we get $[u]_{s,p,a}= \infty$. In a similar way, one can verify that the following inequality
\begin{align*}
    \int_{\R^N} \frac{\abs{\Gr u(x)}^p}{\abs{x}^a} \, \dx \le C [u]_{s,p,a}^p, \; \forall \, u \in W_a^{s,p}(\R^N),
\end{align*}
does not hold for any $C>0$. This phenomenon conveys the fact that $W_a^{s,p}(\R^N)$ is not an appropriate function space to look for the weak solution for \eqref{nonlocal problem} as $\mathcal{J}$ is not coercive on $W_a^{s,p}(\R^N)$. 

The natural spaces to look for the existence of weak solutions for \eqref{nonlocal problem} are the higher order fractional weighted Beppo-Levi spaces $\Dsp$, which are defined by
\begin{align*}
    \Dsp:= \text{closure of } \cc(\R^N) \text{ with respect to } [\cdot]_{s,p,a}.
\end{align*}
The notation $\Dsp$ is reminiscent of the historical one introduced by Deny and Lions in their seminal paper \cite{Deny_Lions_1954}, where the authors first studied homogeneous spaces obtained by the completion of $\cc(\R^N)$ with respect to a given seminorm. Notice that for $f \in (\Dsp)^*$, $\mathcal{J}$ is well defined and  
\begin{align*}
    \mathcal{J}(u) \ge \frac{[u]_{s,p,a}^p}{p} - \norm{f}_{(\Dsp)^*} [u]_{s,p,a}, \; \forall\, u \in \Dsp. 
\end{align*}
The above inequality conveys that $\mathcal{J}$ is coercive on $\Dsp$. Hence, the existence of a solution for \eqref{nonlocal problem} can be assured in $\Dsp$ if one verifies other qualitative properties of $\mathcal{J}$; for example, the weak lower semicontinuity of $\mathcal{J}$ in $\Dsp$. The main focus of this article is to characterize $\Dsp$ as a function space. Verifying that a function $u \in \Dsp$ can be challenging, as it is not always feasible to find a sequence $\{u_n\} \subset \cc(\R^N)$ that converges to $u$ with respect to $[\cdot]_{s,p, a}$. To address this difficulty, we study an equivalent characterization of $\Dsp$. For $s=1+\sigma, \sigma \in (0,1), a \in [0, \frac{N-sp}{2})$ and $p\in (1, \frac{N}{s})$, we define the higher order fractional weighted homogeneous space $\Wsp$ as 
\begin{align}\label{Wsp space}
    \mathring{W}^{s,p}_a(\R^N):= \left\{u \in L_a^{p^*_s}(\R^N) : \norm{u}_{\Wsp} < \infty \right\},
\end{align}
where
\begin{align*}
    \norm{u}_{\Wsp} := \norm{\Gr u}_{L_a^{p^*_{\sigma}}(\R^N)} + [u]_{s,p,a}, 
\end{align*}
where the weighted Lebesgue space $L^{p^*_{\alpha}}_a(\R^N)$, for $\alpha=s,\sigma$, is given by
\begin{align*}
L_a^{p^*_{\al}}(\R^N):= \left\{ u \text{ is measurable} : \norm{u}_{L_a^{p^*_{\al}}(\R^N)} < \infty  \right\}, \text{ where } \norm{u}^{p^*_{\al}}_{L_a^{p^*_{\al}}(\R^N)}  := \int_{\R^N} \frac{\abs{u(x)}^{p^*_{\al}}}{\abs{x}^{\frac{2ap^*_{\al}}{p}}} \, \dx,
\end{align*}
and the fractional critical exponent $p^*_{\al}$ is defined as $p^*_{\al} :=\frac{Np}{N-\alpha p}$. The name `\textit{homogeneous}' is due to the fact that 
\begin{align*}
    \norm{u_{\la}}_{\mathring{W}^{s,p}_a(\R^N)} = \norm{u}_{\mathring{W}^{s,p}_a(\R^N)}, \text{ for } u_{\la}(x) =  \la^{\frac{N-sp-2a}{p}} u(\la x), \text{ where } \la>0. 
\end{align*}
In the first part of this article, we aim to establish the existence of an isometric isomorphism between $\Wsp$ and $\Dsp$.
For that we investigate certain qualitative properties of $\Wsp$, such as the density of $\cc(\R^N)$ in $\Wsp$ and the equivalence between $[\cdot]_{s,p,a}$ and $\norm{\cdot}_{\Wsp}$.

For $s\in (0,1]$, a similar characterization for the Beppo Levi space is recently studied in \cite{Brasco_Characterisation}.
For $a=0, s\in (0,1]$ and $p \in (1, \frac{N}{s})$, the Beppo-Levi space $\mathcal{D}^{s,p}(\R^N)$ is defined as the completion of $\cc(\R^N)$ with respect to the seminorms 
\begin{align*}
    \norm{u}_{s,p}:= \begin{cases}
        \displaystyle \left(\iint_{\R^N \times \R^N}\frac{\abs{u(x)-u(y)}^p}{\abs{x-y}^{N+sp}}\,\dx \dy\right)^{\frac{1}{p}},& \text{ for } s\in (0,1),\\
        \displaystyle \left(\int_{\R^N} \abs{\Gr u}^p\,\dx\right)^{\frac{1}{p}},& \text{ for } s=1.
    \end{cases}
\end{align*}
From \cite[Theorem 10.2.1]{Mazya_Sobolev_Spaces} and \cite[Theorem 9.9]{Brezis_book}, the following Sobolev inequality holds
\begin{align}\label{Sobolev inequality}
    \norm{u}_{L^{\ps}(\R^N)} \leq C(N,p,s)\norm{u}_{s,p}, \; \forall \, u \in \cc(\R^N).
\end{align}
In \cite[Theorem 3.1]{Brasco_Characterisation}, using \eqref{Sobolev inequality}, Brasco et al. characterize the fractional Beppo-Levi space $\mathcal{D}^{s,p}(\R^N)$ as 
\begin{align*}
    \mathcal{D}^{s,p}(\R^N) = \left\{u \in L^{\ps}(\R^N) : \norm{u}_{s,p} <\infty \right\}.
\end{align*}
For $a \in (0, \frac{N-sp}{2})$ the following weighted Sobolev inequality holds (see \cite[Theorem 1.5]{Caffarelli_Kohn_2017}):
\begin{align}\label{weighted fractional}
 \norm{u}_{\Lpastar} \leq C(N,p,s,a) \norm{u}_{s,p,a}, \; \forall \, u \in \cc(\R^N),
\end{align}
where $\norm{\cdot}_{s,p,a}$ is defined as 
\begin{align*}
    \norm{u}_{s,p,a}:=  \left(\iint_{\R^N \times \R^N} \frac{\abs{u(x)-u(y)}^p}{\abs{x-y}^{N+sp}}\,\dxy \right)^{\frac{1}{p}}.
\end{align*}
Motivated by \eqref{weighted fractional}, the authors in \cite{Density_property} define $\Wsp$ endowed with the following norm 
$$\norm{u}_{\mathring{W}_a^{s,p}(\R^N)}:= \norm{u}_{s,p,a} + \norm{u}_{\Lpastar}.$$
In \cite[Theorem 1.1]{Density_property}, the authors proved that $\cc(\R^N)$ is dense in $\Wsp$. Clearly, this density result and \eqref{weighted fractional} infer that $\norm{\cdot}_{s,p,a}$ is an equivalent norm in $\Wsp$. The appearance of the space $\Wsp$ is seen in some literature; for example, Hardy-Lieb-Thirring inequalities (see \cite{Frank_Lieb_2008}) and the non-local
critical problems involving the Hardy-Leray potential (see \cite{Dipierro-Montoro-2015}), for the normalized solutions of some fractional Schr\"{o}dinger equations (see \cite{Xiang2024}).

For  $s \in (0,1)$, $a\in [0,\frac{N-sp}{2})$ and $p\in (1,\frac{N}{s})$, from \cite[Theorem 2.7]{Caffarelli_Kohn_2017} we now recall the following weighted Hardy inequality 
\begin{align}\label{weighted Hardy_intro}
  \int_{\R^N}\frac{\abs{u(x)}^p}{\abs{x}^{sp +2a}}\,\dx \le C\iint_{\R^N \times \R^N} \frac{\abs{u(x)-u(y)}^p}{\abs{x-y}^{N + sp}}\, \dxy, \; \forall \, u \in \cc(\R^N),
\end{align} 
where $C=C(N,p,s,a)>0$. For $a=0$, \eqref{weighted Hardy_intro} reduces to the fractional Hardy inequality derived by Frank-Seiringer in \cite{Frank-JFA-2008} with the best constant. Further, using this inequality the authors in \cite[Lemma 4.3]{Frank-JFA-2008} established an embedding of $\D^{s,p}(\R^N)$ into the Lorentz space $L^{p^*_s, p}(\R^N)$, namely $\norm{u}_{p^*_s, p} \le C(N,p,s) [u]_{s,p}$, for all $u \in \Dsp$. This embedding is finer than the fractional Sobolev embedding $\mathcal{D}^{s,p}(\R^N) \hookrightarrow L^{p^*_s}(\R^N)$ due to the fact that $L^{p^*_s, p}(\R^N) \subsetneq L^{p^*_s, p^*_s}(\R^N) = L^{p^*_s}(\R^N)$. In the local case $s=1$, the Lorentz Sobolev embeddings are studied for $W^{1,p}(\R^N)$ and $\D^{1,p}(\R^N)$. For example, the authors in \cite{AN2019, Peetre-1966, Tartar1998} independently demonstrated that $W^{1,p}(\R^N) \hookrightarrow L^{p^*,p}(\R^N)$ where $p^*= \frac{Np}{N-p}$ for $N>p$. In \cite{ANU2021}, the authors showed that $\mathcal{D}^{1,p}(\R^N) \hookrightarrow L^{p^*,p}(\R^N)$. For $s \in (1,2)$, the corresponding fractional Lorentz Sobolev embedding is not known. However, for $s=2$, in \cite{MaEv} the authors showed that the Beppo Levi space $\mathcal{D}^{2,2}(\R^N)$ (which is a completion of $\cc(\R^N)$ with respect to $\norm{\Delta u}_{L^2(\R^N)}$) is continuously embedded into the Lorentz space $L^{2^{**}, 2}(\R^N)$ where $2^{**} = \frac{2N}{N-4}$.

In the second part of this article, for $s \in (1,2)$, we establish an embedding $\Dsp$ into the Lorentz space $L^{p^{\star}, p}(\R^N)$, where $p^{\star} = \frac{Np}{N-sp-2a}$. This embedding is finer since $L^{p^{\star}, p}(\R^N) \subsetneq L^{p^*_s}_a(\R^N)$ (see Proposition \ref{finer}). Moreover, $p^{\star}$ is an appropriate exponent in this case, since for $s=p=2$ and $a=0$, $p^{\star}$ becomes $\frac{2N}{N-4}$. 
For this finer embedding, we derive a fractional weighted Rellich inequality in Proposition \ref{weighted higher}. For $a=0$ and $p=2$, a similar inequality has been obtained in \cite[Corollary 7.5]{EdEv}.

The following theorem collects the main results of this article. 

\begin{theorem}\label{Theorem1.1}
Let $N \ge 2, s \in (1,2), \sigma = s-1, p \in (1, \frac{N}{s})$, and $a \in [0, \frac{N-sp}{2})$. Then the following hold: 
\begin{enumerate}
    \item[\rm{(i)}](\textbf{Density of smooth functions}) For every $u \in \Wsp$, there exists $u_n \in \cc(\R^N)$ such that
    \begin{align*}
        [u-u_n]_{s,p,a} \ra 0, \text{ as } n \ra \infty, 
    \end{align*}
    namely $\cc(\R^N)$ is dense in $\Wsp$ with respect to the seminorm $[\cdot]_{s,p,a}$. 
    \item[\rm{(ii)}](\textbf{Equivalent norm}) There exists $C=C(N,p, \sigma,a)>0$ such that 
    \begin{align*}
        \norm{\Gr u}_{L_a^{p^*_{\sigma}}(\R^N)} \leq C \left[u\right]_{s,p,a}, \; \forall \, u \in \Wsp.  
    \end{align*}
    \item[\rm{(iii)}](\textbf{Characterization}) $\Wsp$ is a Banach space with respect to the norm $[\cdot]_{s,p,a}$. Further, there exists an isometric isomorphism
\begin{align*}
    \mathcal{J}: \Dsp \ra \Wsp,
\end{align*}
i.e., the space $\Wsp$ is identified with $\Dsp$.
    \item[\rm{(iv)}](\textbf{Finer embeddings}) Denote $p^{\star} = \frac{Np}{N-sp-2a}$. Then 
    \begin{align}\label{finer embedding}
    |u|_{p^{\star}, p} \le C(N,s,p,a) [u]_{s,p,a}, \; \forall \, u \in \Dsp,
   \end{align}
i.e., $\Dsp \hookrightarrow L^{p^{\star}, p}(\R^N)$.
\end{enumerate}
\end{theorem}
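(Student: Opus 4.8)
The plan is to prove the four parts in the order (ii) $\to$ (i) $\to$ (iii) $\to$ (iv), since the equivalent-norm estimate underlies everything, and the finer embedding (iv) is logically independent but uses the characterization only to extend from $\cc(\R^N)$ to $\Dsp$ by density.

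\textbf{Step 1 (part (ii), the equivalent norm).} The goal is $\norm{\Gr u}_{L_a^{p^*_\sigma}(\R^N)} \le C[u]_{s,p,a}$. The natural route is to apply the \emph{first-order} weighted fractional Sobolev inequality \eqref{weighted fractional}, with $s$ there replaced by $\sigma = s-1 \in (0,1)$ and the weight exponent $a$ kept, to each component $\partial_i u$ of the gradient: this gives $\norm{\partial_i u}_{L^{p^*_\sigma}_a(\R^N)} \le C \norm{\partial_i u}_{\sigma,p,a}$, and summing (or using equivalence of norms on $\R^N$-valued maps) yields $\norm{\Gr u}_{L^{p^*_\sigma}_a(\R^N)} \le C [u]_{s,p,a}$, because by definition $[u]_{s,p,a}^p = \iint |\Gr u(x)-\Gr u(y)|^p |x-y|^{-N-\sigma p}\,\dxy$ is exactly the Gagliardo $\sigma$-seminorm of the vector field $\Gr u$. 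The one technical point is that \eqref{weighted fractional} is stated for scalar $\cc(\R^N)$ functions, so for $u \in \Wsp$ one must either first know $\Gr u$ can be approximated appropriately, or — cleaner — prove (ii) first for $u \in \cc(\R^N)$ and postpone the general case to after the density statement. I would state and prove (ii) for $u \in \cc(\R^N)$, then note it passes to all of $\Wsp$ once (i) is established. The admissibility range $a \in [0,\frac{N-sp}{2}) \subset [0, \frac{N-\sigma p}{2})$ is what makes \eqref{weighted fractional} applicable at level $\sigma$.

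\textbf{Step 2 (part (i), density).} Given $u \in \Wsp$, I would produce $u_n \in \cc(\R^N)$ with $[u-u_n]_{s,p,a}\to 0$ by the classical two-move argument: \emph{truncation/cutoff} followed by \emph{mollification}. For the cutoff, multiply by $\eta(x/R)$ with $\eta \in \cc$, $\eta \equiv 1$ on $B_1$; one must control the commutator terms in the Gagliardo seminorm of $\Gr(\eta_R u) = \eta_R \Gr u + u\,\Gr\eta_R$, using that $\Gr u \in L^{p^*_\sigma}_a$ and $u \in L^{p^*_s}_a$ together with the scaling/decay of $\Gr \eta_R$; the weighted-Hardy-type inequality \eqref{weighted Hardy_intro} (at level $\sigma$) is the tool for absorbing the $u\,\Gr\eta_R$ piece. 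For mollification, convolve with a standard mollifier $\rho_\var$; since translation is continuous on the Gagliardo seminorm and $\rho_\var * \Gr u \to \Gr u$ in the relevant norm, $[\rho_\var * v - v]_{s,p,a}\to 0$. The weight $|x|^{-a}$ requires care near the origin, but for $a < \frac{N-sp}{2}$ it is locally integrable to the needed powers, so the mollification estimates go through as in \cite{Density_property}. I expect this to be the most delicate step — in particular, showing the cutoff does not blow up the seminorm uniformly as $R \to \infty$, which is where the homogeneity/scaling and the a priori membership $u \in L^{p^*_s}_a$ are essential.

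\textbf{Step 3 (part (iii), characterization).} Once (i) and (ii) hold: (ii) says $[\cdot]_{s,p,a}$ dominates $\norm{\Gr u}_{L^{p^*_\sigma}_a}$, and the weighted Sobolev inequality \eqref{weighted fractional} applied to $u$ itself (at level $s$, legitimate since $a < \frac{N-sp}{2}$) gives $\norm{u}_{L^{p^*_s}_a} \le C[u]_{s,p,a}$; hence $[\cdot]_{s,p,a}$ is equivalent to $\norm{\cdot}_{\Wsp}$ on $\Wsp$, and in particular $[\cdot]_{s,p,a}$ is a genuine \emph{norm} (definiteness follows because $[u]_{s,p,a}=0$ forces $\norm{u}_{L^{p^*_s}_a}=0$, so $u=0$). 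Completeness: a $[\cdot]_{s,p,a}$-Cauchy sequence in $\Wsp$ is then Cauchy in $L^{p^*_s}_a$ and its gradient Cauchy in $L^{p^*_\sigma}_a$, and the Gagliardo seminorm is lower semicontinuous under these convergences, giving a limit in $\Wsp$; so $\Wsp$ is Banach. Finally define $\mathcal{J}:\Dsp \to \Wsp$: an element of $\Dsp$ is an equivalence class of $[\cdot]_{s,p,a}$-Cauchy sequences from $\cc(\R^N)$; by the inequalities just noted such a sequence converges in $L^{p^*_s}_a$ (and in gradient) to a function $u$, which lies in $\Wsp$ and is independent of the representative — this is $\mathcal{J}$. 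It is isometric by construction; injective because $[\mathcal{J}(\cdot)]_{s,p,a}$ is the $\Dsp$-norm; and surjective precisely by (i). So $\Dsp \cong \Wsp$.

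\textbf{Step 4 (part (iv), finer embedding).} By density (i) it suffices to prove \eqref{finer embedding} for $u \in \cc(\R^N)$. The strategy mirrors \cite[Lemma 4.3]{Frank-JFA-2008}: combine the pointwise bound on the decreasing rearrangement $u^*(t)$ coming from a \emph{weighted Hardy/Rellich inequality} at the second-order level with the representation of the Lorentz quasinorm $|u|_{p^\star,p}^p \sim \int_0^\infty (t^{1/p^\star} u^*(t))^p \, \frac{\dt}{t}$. Concretely, I would first establish the fractional weighted Rellich inequality announced as Proposition \ref{weighted higher}, i.e. $\int_{\R^N} |u(x)|^p |x|^{-sp-2a}\,\dx \le C[u]_{s,p,a}^p$ — this is the $s\in(1,2)$, weighted analogue of \cite[Corollary 7.5]{EdEv}, obtained by iterating/composing the first-order weighted Hardy inequality \eqref{weighted Hardy_intro} (at level $\sigma$) with the gradient estimate (ii), carefully tracking the weight exponents so that $sp+2a = \sigma p + (p + 2a)$ matches up. From the Rellich-type inequality and its dilation invariance, a standard rearrangement argument (test the inequality against $u$ truncated at heights, or use the layer-cake/Polya–Szegő-type bound for these seminorms) yields the pointwise estimate $t^{(N-sp-2a)/(Np)} u^*(t) \le C\,[\text{tail of }[u]_{s,p,a}]$, which integrated in $\frac{\dt}{t}$ against the $p$-th power gives exactly $|u|_{p^\star,p} \le C[u]_{s,p,a}$. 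The main obstacle here is the Rellich inequality itself: unlike the local second-order case there is no Laplacian to integrate by parts against, so one must run the argument purely at the level of the Gagliardo seminorm of $\Gr u$, and the bookkeeping of admissible weight ranges (ensuring all intermediate exponents stay in the Caffarelli–Kohn–Nirenberg admissible window) is where the proof will require the most care.
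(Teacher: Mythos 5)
Your overall architecture matches the paper's for parts (i)--(iii): density via mollification plus cutoff following \cite{Density_property}, the gradient estimate via the first-order weighted fractional Sobolev inequality at level $\sigma$ applied componentwise to $\Gr u$ on smooth functions, and the characterization via the standard completion construction. Two steps you gloss over do require real work. First, in Step 1 the passage of (ii) from $\cc(\R^N)$ to all of $\Wsp$ is not automatic from (i): knowing $[u-v_n]_{s,p,a}\to 0$ makes $\{\Gr v_n\}$ Cauchy in $L^{p^*_\sigma}_a(\R^N)$, but you must still identify its limit with $\Gr u$; the paper devotes the bulk of Proposition \ref{prop3.4} to proving directly that $\norm{\Gr u-\Gr v_n}_{\Lasig}\to 0$ (the $A_n,B_n,C_n$ estimates), and an alternative is the distributional identification carried out in Proposition \ref{Prop4.1}. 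Second, in Step 3 the isometry of $\mathcal{J}$ requires $[u-u_n]_{s,p,a}\to 0$, not merely lower semicontinuity (which only gives $[u]_{s,p,a}\le\liminf[u_n]_{s,p,a}$); the paper closes this by showing the difference quotients $V_{n,i}$ form a Cauchy sequence in $L^p(\R^N\times\R^N)$ and identifying the limit a.e.

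For part (iv) you take a genuinely different, and in my view riskier, route to the finish. The paper's proof of the Rellich inequality (Proposition \ref{weighted higher}) is the composition you describe --- weighted fractional Hardy at level $\sigma$ applied to each $\partial_i u$, then the local Caffarelli--Kohn--Nirenberg inequality \eqref{C-K-N_1} with the exponent bookkeeping $sp+2a=\sigma p+(p+2a)$ --- so that part agrees. But the paper then finishes with a one-line layer-cake identity: $\int_{\R^N}|u|^p|x|^{-sp-2a}\,\dx = \omega_N^{(sp+2a)/N}\,|u|^p_{p^{\star},p}$ exactly, so the Rellich inequality \emph{is} the Lorentz embedding with no rearrangement argument needed. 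Your proposed pointwise bound $t^{1/p^{\star}}u^*(t)\le C$ cannot simply be integrated against $\dt/t$ (that integral diverges for a uniform bound); one would need the full dyadic decomposition of \cite[Lemma 4.3]{Frank-JFA-2008} to make that route rigorous, which is substantially more work than the identity the paper uses.
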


The rest of the article is organized as follows. In Section 2, we state mollifier and cut-off functions and define Lorentz spaces, weak Young's inequality. In Section 3, we prove the approximations with smooth functions and norm equivalence in $\Wsp$. Section 4 is devoted to study the characterization between $\Wsp$ and $\Dsp$. In the final section, we prove fractional weighted Rellich inequality and finer embeddings of $\Dsp$.

\section{Preliminaries}

We begin by proving several basic inequalities to be used frequently in the subsequent sections.
Next, we introduce the mollifier and cut-off functions essential for establishing the density results. Subsequently, we define the Lorentz spaces and state the weak Young's inequality. The following notations will be used throughout this paper.

\noi \textbf{Notation:}
\begin{enumerate}
    \item[(i)]  For $p \in [1,\infty]$, $L^p(\R^N)$-norm of $f$ is denoted as $\norm{f}_p$.
    \item[(ii)] $B_r:=B_r(0) \subset \R^N$ denotes a ball of radius $r$ with centre at the origin.
    \item[(iii)] The convolution $f \star g$ is defined as $(f \star g)(x) = \int_{\R^N} f(x-z) g(z) \, \dz.$
    \item[(iv)] For $\rho_n$ defined in \eqref{molli}, we denote 
    \begin{align*}
        u_n = u \star \rho_n \text{ and } \frac{\partial u_n}{\partial x_i} = \frac{\partial u}{\partial x_i} \star \rho_n,
    \end{align*}
    for $i=1,\cdot,\cdot,\cdot,N$. 
    \item[(v)]For $A \subset \R^N$, it's complement $\R^N \setminus A$ is denoted as $A^c$.
    \item[(vi)] $C$ denotes generic positive constant. 
\end{enumerate}

\begin{lemma}\label{lemma_1} Let $a_i \geq 0$, for $i=1,2,\cdot,\cdot,\cdot,N$ and $0<q<\infty$. Then
  \begin{itemize}
      \item[\rm{(i)}] $\left(a_1 + a_2+ ...+a_N \right)^q \leq A(N,q) \left(a_1^q + a_2^q +...+a_N^q \right)$,
      \item[\rm{(ii)}]  $\left(a_1 + a_2+ ...+a_N \right)^q \geq B(N,q) \left(a_1^q + a_2^q +...+a_N^q \right)$, 
  \end{itemize} 
where 
\begin{align*}
    A(N,q)=1,~ B(N,q) =\begin{cases}
     \left(\frac{q}{2}\right)^{\frac{N+1}{2}},& \text{ if } N \text{ is odd }\\
     \left(\frac{q}{2}\right)^{\frac{N}{2}},& \text{ if } N \text{ is even}
 \end{cases}, \text{ for } 0<q<1,
\end{align*}
and 
\begin{align*}
    B(N,q)=1,~ A(N,q) =\begin{cases}
          \left(2^{q-1}\right)^{\frac{N+1}{2}}, & \text{ if } N \text{ is odd}\\
          \left(2^{q-1}\right)^{\frac{N}{2}},& \text{ if } N \text{ is even}
      \end{cases}, \text{ for } 1\leq q < \infty.
\end{align*}
\end{lemma}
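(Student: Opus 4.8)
The plan is to reduce both inequalities to the two-variable case and then iterate, treating $q\ge 1$ and $0<q<1$ separately, since $t\mapsto t^{q}$ is convex (and superadditive) on $[0,\infty)$ in the first range and concave (and subadditive) in the second.

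\textbf{The two cost-free bounds.} For $q\ge 1$, superadditivity of $t\mapsto t^{q}$ gives $(x+y)^{q}\ge x^{q}+y^{q}$ for $x,y\ge 0$; folding in one summand at a time yields $(a_{1}+\dots+a_{N})^{q}\ge a_{1}^{q}+\dots+a_{N}^{q}$, i.e.\ (ii) with $B(N,q)=1$, which is sharp (take all but one $a_{i}$ equal to $0$). Symmetrically, for $0<q<1$, subadditivity gives $(x+y)^{q}\le x^{q}+y^{q}$, hence (i) with $A(N,q)=1$.

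\textbf{The two nontrivial bounds.} I would first record the sharp two-variable inequality $(x+y)^{q}\le 2^{q-1}(x^{q}+y^{q})$ for $q\ge 1$ — this is just convexity of $t\mapsto t^{q}$ applied to $\tfrac{x+y}{2}$ — and the companion lower bound $(x+y)^{q}\ge\tfrac{q}{2}(x^{q}+y^{q})$ for $0<q<1$; the latter is elementary, since, assuming $x\ge y$, one has $(x+y)^{q}\ge x^{q}$ while $x^{q}\ge\tfrac{q}{2}(x^{q}+y^{q})$ is equivalent to $\tfrac{2-q}{q}\ge(y/x)^{q}$, which holds because $(y/x)^{q}\le 1\le\tfrac{2-q}{q}$ when $q\le 1$. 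Writing $\kappa=2^{q-1}$ in the convex case and $\kappa=q/2$ in the concave case, I would then argue by induction on $N$ in steps of two. The base cases $N=2$ and $N=3$ follow by using the two-variable estimate once, respectively twice (split off one term, then the remaining pair), absorbing the leftover single term via $1\le 2^{q-1}$ (resp.\ $q/2\le 1$); this gives $A(2,q)=2^{q-1}$, $A(3,q)=(2^{q-1})^{2}$ and $B(2,q)=q/2$, $B(3,q)=(q/2)^{2}$. For the inductive step one writes $a_{1}+\dots+a_{N+2}=(a_{1}+\dots+a_{N})+(a_{N+1}+a_{N+2})$, applies the two-variable estimate to this split, then the inductive hypothesis to $(a_{1}+\dots+a_{N})^{q}$ and the two-variable estimate again to $(a_{N+1}+a_{N+2})^{q}$, and absorbs the surplus factor onto the last pair using $\kappa\le A(N,q)$ (resp.\ $B(N,q)\le\kappa$), which is valid once $N\ge 2$. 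This yields the recursions $A(N+2,q)=2^{q-1}A(N,q)$ and $B(N+2,q)=\tfrac{q}{2}B(N,q)$; unwinding them from the base cases gives precisely $A(N,q)=(2^{q-1})^{\lceil N/2\rceil}$ and $B(N,q)=(q/2)^{\lceil N/2\rceil}$, which is the claimed form since $\lceil N/2\rceil=\tfrac{N+1}{2}$ for odd $N$ and $\tfrac{N}{2}$ for even $N$.

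\textbf{Main obstacle.} There is no real analytic difficulty here; the only care needed is bookkeeping — running the odd and even inductions in parallel, checking the monotonicity inequalities ($\kappa\le A(N,q)$ and $B(N,q)\le\kappa$) used to absorb the extra factor onto the leftover pair, and getting the two base cases right so that the closed forms come out exactly as stated rather than up to some dimensional constant. (If one only needs admissible constants, the power-mean inequality comparing $\big(\tfrac1N\sum_{i}a_{i}\big)^{q}$ with $\tfrac1N\sum_{i}a_{i}^{q}$ already yields the constant $N^{q-1}$ for the nontrivial direction, which is in fact sharper than the stated one for most $N$.)
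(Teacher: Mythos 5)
Your proof is correct and takes essentially the same approach as the paper: the identical two-variable inequalities (sub/superadditivity of $t\mapsto t^q$ for the trivial constants, convexity for $2^{q-1}$, and an elementary comparison for $q/2$), iterated by splitting the sum into pairs. The paper just works out the $N=3$ and $N=4$ cases explicitly and asserts the general case "upon generalization," whereas you make the induction in steps of two explicit; your direct verification of $(x+y)^q\ge \tfrac{q}{2}(x^q+y^q)$ is a minor, equally valid variant of the paper's argument via the auxiliary function $f_1(t)=(t+1)^q-\tfrac{q}{2}(t^q+1)$.
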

\begin{proof}
 $\rm{(i)}$ \textbf{For \boldmath$0<q<1$:} In this case, $(a_1 + a_2)^q \leq a_1^q + a_2^q$ and generalising this inequality we get
 \begin{align*}
     \left(a_1 + a_2+ ... +a_N \right)^q \leq \left(a_1^q + a_2^q +...+a_N^q \right).
 \end{align*}
\noindent \textbf{For \boldmath$1\leq q<\infty$:} In this case, using the convexity of $x \mapsto |x|^q$, we have $(a_1 + a_2)^q \leq 2^{q-1} (a_1^q + a_2^q)$ for any $a_1, a_2 \ge 0$. Using it, we find
 \begin{align*}
    (a_1 + a_2 + a_3)^q \leq 2^{q-1}(a_1 + a_2)^q + 2^{q-1} a_3^q ~\leq~ \left(2^{q-1}\right)^2(a_1^q + a_2^q + a_3^q),
 \end{align*} and 
 \begin{align*}
     (a_1 + a_2 + a_3+ a_4)^q \leq 2^{q-1}(a_1 + a_2)^q + 2^{q-1}(a_3+ a_4)^q  \leq \left(2^{q-1}\right)^2(a_1^q + a_2^q + a_3^q+a_4^q).
 \end{align*}
Hence, upon generalization, 
\begin{align*}
    \left(a_1 + a_2+ ...+a_N \right)^q \leq A(N,q) \left(a_1^q + a_2^q +...+a_N^q \right).
\end{align*}
\noindent $\rm{(ii)}$ \textbf{For \boldmath$0<q<1$:} We first prove that 
    \begin{equation} \label{a1}
        (a_1 + a_2)^q \geq \frac{q}{2}(a_1^q + a_2^q).
    \end{equation}
We define $f_1: [1,\infty) \rightarrow \R$ by $f_1(t) = (t+1)^q - \frac{q}{2} (t^q +1)$. Then $f_1(t)\geq \left(1-\frac{q}{2} \right)t^q -\frac{q}{2} \geq 1-\frac{q}{2}  -\frac{q}{2}>0,$ which implies 
\begin{equation}\label{a2}
    \left( t+1 \right)^q > \frac{q}{2} (t^q+1), \text{ for all } t \geq 1.
\end{equation}
Now \eqref{a1} is trivial if either $a_1=0$ or $a_2=0$. So, we assume $a_1,a_2 \neq 0$. If $a_1 \geq a_2$, then for $t =\frac{a_1}{a_2}$ in \eqref{a2} we get \eqref{a1}. If $a_1\leq a_2$, then for $t = \frac{a_2}{a_1}$ in \eqref{a2} we get \eqref{a1}. 
From \eqref{a1} we deduce that
\begin{align*}
    (a_1 + a_2 + a_3)^q \geq \frac{q}{2}(a_1 + a_2)^q + \frac{q}{2} a_3^q \geq \left(\frac{q}{2}\right)^2(a_1^q + a_2^q + a_3^q),
 \end{align*} and 
 \begin{align*}
  (a_1 + a_2 + a_3+ a_4)^q \geq \frac{q}{2}(a_1 + a_2)^q + \frac{q}{2}(a_3+ a_4)^q  \geq \left(\frac{q}{2}\right)^2(a_1^q + a_2^q + a_3^q+a_4^q).   
 \end{align*}
Upon generalization, we obtain
\begin{align*}
    \left(a_1 + a_2+ ...+a_N \right)^q \geq B(N,q) \left(a_1^q + a_2^q +...+a_N^q \right).
\end{align*}
\textbf{For \boldmath$1\leq q<\infty$:} We consider a function $f_2 : [1,\infty) \rightarrow \R$ by $f_2(t)= (t+1)^q-t^q-1$.
Then $f_2'(t) = q[(t+1)^{q-1}-t^{q-1}] \geq 0$. Thus, $f(t) \geq f(1)$ for all $t \geq 1$, which implies $(t+1)^q-t^q-1 \geq 2^q-2 \geq0$ as $q\geq 1$. Thus,
\begin{equation}\label{a3}
    (t+1)^q \geq t^q+1, \text{ for all } t \geq 1.
\end{equation}
By substituting $t=\frac{a_1}{a_2}$ if $a_1 \geq a_2$ or $t=\frac{a_2}{a_1}$ if $a_1 \leq a_2$ into \eqref{a3}, we get
\begin{equation}\label{a4}
    (a_1 + a_2)^q \geq a_1^q + a_2^q.
\end{equation}
Generalizing \eqref{a4}, $(a_1 + a_2+...+a_N)^q \geq a_1^q + a_2^q+...+a_N^q$.
\end{proof}

\begin{definition}[Mollifier functions]\label{Mollifier functions}
 Consider a radially symmetric and decreasing function $\rho \in \cc(\R^N)$ such that $\rho \geq 0$ on $\R^N$, $\text{supp}(\rho) \subset \overline{B_1}$ and $\int_{\R^N} \rho(x)\,dx=1$. For each $n \in \N$, define 
\begin{equation}\label{molli}
    \rho_n(x) = n^N \rho(nx), \text{ for } x \in \R^N.
\end{equation}
Then $\rho_n$ satisfies 
\begin{align*}
    \rho_n \in \cc(\R^N),\; \text{supp}(\rho_n) \subset \overline{B_{\frac{1}{n}}}, \; \int_{\R^N} \rho_n(x)\,dx =1, \; \rho_n \geq 0 \text{ in } \R^N.
\end{align*}
The sequence $\{\rho_n\}$ is called a sequence of mollifiers.   
\end{definition}

\begin{definition}[Cut-off functions]
  Fix a function $\zeta \in \cc(\R^N)$ such that $0\leq \zeta \leq 1$ and 
\begin{align*}
    \zeta(x) = \begin{cases}
    1 &\quad \text{ if } |x|<1,\\
    0 &\quad  \text{ if } |x|\geq 2.
    \end{cases}
\end{align*}
For each $n \in \N$, define 
\begin{equation}\label{cutoff}
    \zeta_n(x):= \zeta\left(\frac{x}{n}\right).
\end{equation}
The sequence $\{\zeta_n\}$ is called the cut-off function. Observe that
\begin{align}\label{estimate zeta_n}
    \|\nabla \zeta_n \|_{\infty}\leq \frac{C}{n},  \text{ and } \|\nabla^2 \zeta_n\|_{\infty} \leq \frac{C}{n^2},
\end{align}
where $C=C(N)$ and $\nabla^2$ represents the Hessian matrix.  
\end{definition}

Now, we recall the definition of Lorentz space introduced by Lorentz in \cite{Lorentz}. For an open set $\Omega \subset \R^N$, let $M(\Omega)$ be the set of all extended real-valued Lebesgue measurable functions that are finite a.e. in $\Omega$. For $t > 0$ and $f \in M(\Omega)$, define $E_{f}(t) := \{ x \in \Omega : \abs{f(x)} > t \}$. The distribution function $\mu_{f}$ of $f$ is defined as $\mu_f(t) := \abs{E_{f}(t)}$. For $(p,q) \in (0,\infty) \times (0,\infty]$ we consider the following quantity
 \begin{align*}
 |f|_{p,q} := \left\{\begin{array}{ll}
 \left( p \displaystyle \int_0^\infty t^{q-1} {\mu_f(t)}^{\frac{q}{p}} \, \dt \right)^{\frac{1}{q}}, \quad & q < \infty; \\ 
 \displaystyle \sup_{t>0} t\mu_f(t)^{\frac{1}{p}},\quad & q=\infty.
 \end{array} 
 \right. 
 \end{align*}
The Lorentz space $L^{p,q}(\Omega)$ is defined as
 \[ L^{p,q}(\Omega) := \left \{ f \in M(\Omega): \, |f|_{p,q} < \infty \right \},\]
where $ |f|_{p,q}$ is a quasi norm on $L^{p,q}(\Omega).$ Notice that $L^{p,p}(\R^N) = L^p(\R^N)$ and $L^{p,q_1}(\R^N) \subsetneq L^{p, q_2}(\R^N)$ provided $q_1<q_2$ (see \cite[Proposition 3.4.3]{EdEv}). 
For details about this space, we refer to  \cite{EdEv}.

\begin{example}\label{example-weak-Lp}
   For $d<N$, consider the function $g(x) = \abs{x}^{-d}$ where $x \in \R^N$. Using \cite[Example 2.2.5]{biswas2021} we get $g \in L^{\frac{N}{d}, \infty}(\R^N)$.
\end{example}

In the following proposition, we state the weak Young's inequality. For more details and proofs of this inequality, we refer \cite{Lieb2001}. 

\begin{proposition}\label{convolution weak}
Let $1<p,q ,r < \infty$ be such that $1+\frac{1}{r}=\frac{1}{p} + \frac{1}{q}$. Let $f \in L^{p,\infty}(\R^N)$ and $h \in L^q(\R^N)$. Then $f \star h \in L^r(\R^N)$, and
\begin{align*}
    \norm{f \star h}_{r} \le C(N,p,q) \abs{f}_{p, \infty} \norm{h}_q.
\end{align*}
\end{proposition}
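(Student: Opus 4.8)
The plan is to fix $f \in L^{p,\infty}(\R^N)$, regard $T_f \colon h \mapsto f \star h$ as a linear operator (the convolution being well defined a.e.\ via the splitting introduced below), and show that $T_f$ maps $L^q(\R^N)$ into $L^r(\R^N)$ with operator norm bounded by a constant times $\abs f_{p,\infty}$. I would do this by first proving that $T_f$ is of weak type $(\tilde q, \tilde r)$ for a whole one-parameter family of admissible exponents, and then upgrading to the strong type $(q, r)$ by the Marcinkiewicz interpolation theorem.

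\emph{Weak-type bounds.} Fix $\tilde q \in (1, p')$, where $p' = \frac{p}{p-1}$, and let $\tilde r$ be defined by $\frac1{\tilde r} = \frac1p + \frac1{\tilde q} - 1$; then $\tilde r \in (1, \infty)$ and $\tilde r > \tilde q$ because $p > 1$. For a threshold $t > 0$ split $f = g_t + b_t$ with $g_t = f\,\chi_{\{\abs f \le \delta(t)\}}$ and $b_t = f\,\chi_{\{\abs f > \delta(t)\}}$, the cut level $\delta(t)$ to be chosen. From $\mu_f(s) \le (\abs f_{p,\infty}/s)^p$ and the layer-cake formula one gets
\[
\norm{b_t}_1 \le C(p)\,\abs f_{p,\infty}^{\,p}\,\delta(t)^{1-p}, \qquad \norm{g_t}_{\tilde q'} \le C(p, \tilde q)\,\abs f_{p,\infty}^{\,p/\tilde q'}\,\delta(t)^{\,1 - p/\tilde q'},
\]
the first integral converging at infinity because $p > 1$ and the second near $0$ because $\tilde q' > p$ (equivalently $\tilde q < p'$). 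Hence, by the classical Young inequality (convolution $L^1 \star L^{\tilde q} \subset L^{\tilde q}$) and by H\"older's inequality ($L^{\tilde q'} \star L^{\tilde q} \subset L^\infty$),
\[
\norm{b_t \star h}_{\tilde q} \le \norm{b_t}_1 \norm h_{\tilde q}, \qquad \norm{g_t \star h}_\infty \le \norm{g_t}_{\tilde q'} \norm h_{\tilde q}.
\]
Now choose $\delta(t)$ so that the second right-hand side equals $t/2$; this is legitimate since the exponent $1 - p/\tilde q' = p/\tilde r$ is positive, so $\delta(t)$ is a positive power of $t$. Then $\{\abs{f \star h} > t\} \subset \{\abs{b_t \star h} > t/2\}$, and Chebyshev's inequality combined with the two displays, after inserting the chosen $\delta(t)$ and simplifying the exponents via $\frac1{\tilde r} = \frac1p + \frac1{\tilde q} - 1$, yields
\[
\mu_{f\star h}(t) \le \left(\frac{C(p, \tilde q)\,\abs f_{p,\infty}\,\norm h_{\tilde q}}{t}\right)^{\tilde r},
\]
i.e.\ $T_f$ is of weak type $(\tilde q, \tilde r)$ with constant $\le C(p, \tilde q)\,\abs f_{p,\infty}$.

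\emph{Interpolation.} The relation $1 + \frac1r = \frac1p + \frac1q$ together with $r < \infty$ forces $q \in (1, p')$, so I may pick $1 < q_0 < q < q_1 < p'$ and let $r_0, r_1$ be the exponents associated to $q_0, q_1$ as above. Because $\tilde q \mapsto \frac1{\tilde r} = \frac1{\tilde q} + (\frac1p - 1)$ is affine, the point $(\frac1q, \frac1r)$ lies on the segment joining $(\frac1{q_0}, \frac1{r_0})$ and $(\frac1{q_1}, \frac1{r_1})$, so there is $\theta \in (0,1)$ realizing $(q, r)$ as the interpolated pair; moreover $q_i < r_i$. Applying the Marcinkiewicz interpolation theorem to $T_f$ (of weak types $(q_0, r_0)$ and $(q_1, r_1)$) gives
\[
\norm{f \star h}_r = \norm{T_f h}_r \le C(p,q)\,\abs f_{p,\infty}\,\norm h_q,
\]
where the constant is finite once $q_0, q_1$ are fixed as explicit functions of $p$ and $q$ (a redundant dependence on $N$ may be inserted to match the statement; the constant is in fact dimension-free). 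In particular $f \star h \in L^r(\R^N)$.

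\emph{Main obstacle.} The genuinely delicate step is the passage from weak to strong $L^r$. A single Calder\'on--Zygmund-type splitting of $f$ only yields $f \star h \in L^{r,\infty}(\R^N)$: the resulting level-set estimate decays exactly like $t^{-r}$, which is borderline non-integrable against $t^{r-1}\,\mathrm{d}t$, so one cannot read off membership in $L^r$ directly. It is precisely the interpolation between two nearby admissible pairs — available because the admissible exponents depend affinely on one another — that recovers the full $L^r$ bound. Alternatively, the entire argument can be bypassed by invoking O'Neil's convolution inequality in Lorentz spaces, which gives $f \star h \in L^{r,q}(\R^N)$ with $\abs{f \star h}_{r,q} \le C\,\abs f_{p,\infty}\,\norm h_q$, followed by the continuous inclusion $L^{r,q}(\R^N) \hookrightarrow L^{r,r}(\R^N) = L^r(\R^N)$, valid since $q < r$.
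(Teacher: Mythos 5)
Your proposal is correct, but it should be noted that the paper does not prove this proposition at all: it records it as the known weak Young inequality and refers to Lieb--Loss \cite{Lieb2001} for the proof, so there is no internal argument to compare against. What you supply is a complete, self-contained proof along the standard lines: the level-dependent truncation $f=g_t+b_t$ at height $\delta(t)$, the bounds $\norm{b_t}_1\le C(p)\abs f_{p,\infty}^p\delta^{1-p}$ and $\norm{g_t}_{\tilde q'}\le C\abs f_{p,\infty}^{p/\tilde q'}\delta^{1-p/\tilde q'}$ (both valid exactly because $1<\tilde q<p'$, i.e. $\tilde q'>p$ and $p>1$), the choice of $\delta(t)$ making $\norm{g_t\star h}_\infty\le t/2$, and Chebyshev on $b_t\star h$; the exponent bookkeeping indeed collapses to $\mu_{f\star h}(t)\le\bigl(C\abs f_{p,\infty}\norm h_{\tilde q}/t\bigr)^{\tilde r}$ precisely because of the relation $\frac1{\tilde r}=\frac1p+\frac1{\tilde q}-1$, and by homogeneity the powers of $\abs f_{p,\infty}$ and $\norm h_{\tilde q}$ come out right. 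You are also right to flag that this only gives the weak-type $(\tilde q,\tilde r)$ estimate and that the strong $L^r$ bound needs Marcinkiewicz interpolation between two admissible pairs $q_0<q<q_1<p'$ (available since $q<p'$ follows from $r<\infty$, the map $\frac1{\tilde q}\mapsto\frac1{\tilde r}$ is affine, and $\tilde q_i<\tilde r_i$), or alternatively O'Neil's inequality plus $L^{r,q}\hookrightarrow L^{r,r}=L^r$ for $q<r$. Compared with the paper's citation, your route is more elementary and makes the constant's dependence explicit (and correctly observes that the stated dependence on $N$ is superfluous); the only costs are the standard black boxes you invoke, namely Marcinkiewicz interpolation or O'Neil, either of which is legitimate here.
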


\section{Approximations by smooth functions with compact support}

This section is devoted to proving the density of smooth functions in $\Wsp$ and the equivalence of $[\cdot]_{s,p,a}$ with the $\Wsp$ norm. Given a measurable function $w:\R^{2N} \ra \R$, define 
\begin{align*}
    w * \rho_n(x,y) := \int_{\R^N}  w(x-z,y-z)\rho_n(z)\,\dz.
\end{align*}
Then the following hold.
\begin{proposition}\label{bound1}
(i) For every measurable function $w_1: \R^{2N} \ra \R$,
    \begin{align*}
    \iint_{\R^{N} \times \R^N} \left|w_1 * \rho_n(x,y) \right|^p\,\dxy \leq C \iint_{\R^{N}\times \R^N} \left|w_1(x,y) \right|^p\,\dxy,
    \end{align*}
    where $C$ is independent of $n$.

\noi (ii) Let $\alpha=s, \sigma$. For every measurable function $w_2 : \R^N \ra \R$,
\begin{align*}
    \int_{\R^N} \frac{|w_2 \star \rho_n(x)|^{p^*_{\alpha}}}{\abs{x}^{\frac{2ap^*_{\alpha}}{p}}} \, \dx \le C \int_{\R^N} \frac{|w_2(x)|^{p^*_{\alpha}}}{\abs{x}^{\frac{2ap^*_{\alpha}}{p}}} \, \dx,
\end{align*}
where $C$ is independent of $n$.
\end{proposition}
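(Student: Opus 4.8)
The plan is to prove both statements as consequences of the convolution identity together with the translation-invariance and dilation structure of the relevant measures, using only Minkowski's integral inequality (the ``integral form'' of the triangle inequality in $L^p$) and a $\sup$-type bound for the weight $|x|^{-2ap^*_\alpha/p}$ under small translations.

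For part (i), I would write $w_1 * \rho_n(x,y) = \int_{\R^N} w_1(x-z,y-z)\rho_n(z)\,\dz$ as an average of the translates $(x,y)\mapsto w_1(x-z,y-z)$ against the probability measure $\rho_n(z)\,\dz$. Applying Minkowski's integral inequality in $L^p(\R^{2N})$ gives
\begin{align*}
    \left( \iint_{\R^N\times\R^N} |w_1*\rho_n(x,y)|^p \,\dxy \right)^{1/p}
    \le \int_{\R^N} \rho_n(z) \left( \iint_{\R^N\times\R^N} |w_1(x-z,y-z)|^p\,\dxy \right)^{1/p} \dz.
\end{align*}
Then I would perform the change of variables $x\mapsto x+z$, $y\mapsto y+z$ inside the inner integral; since $\dx\,\dy$ (the unweighted Lebesgue measure on $\R^{2N}$) is translation-invariant, the inner integral equals $\iint |w_1(x,y)|^p\,\dx\,\dy$ independently of $z$, and $\int \rho_n = 1$ finishes part (i) with $C=1$. (Here I am reading $\dxy$ in part (i) as the plain Lebesgue measure $\dx\,\dy$; if instead it denotes the weighted measure $\frac{\dx}{|x|^a}\frac{\dy}{|y|^a}$, the same argument applies after the translation estimate used below for part (ii), yielding $C$ independent of $n$ but not equal to $1$.)

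For part (ii), the obstruction is that the weight $|x|^{-2ap^*_\alpha/p}$ is \emph{not} translation-invariant, so after Minkowski's inequality
\begin{align*}
    \left( \int_{\R^N} \frac{|w_2\star\rho_n(x)|^{p^*_\alpha}}{|x|^{2ap^*_\alpha/p}}\,\dx \right)^{1/p^*_\alpha}
    \le \int_{\R^N} \rho_n(z) \left( \int_{\R^N} \frac{|w_2(x-z)|^{p^*_\alpha}}{|x|^{2ap^*_\alpha/p}}\,\dx \right)^{1/p^*_\alpha} \dz
\end{align*}
and the substitution $x\mapsto x+z$, I am left with $\int_{\R^N} |w_2(x)|^{p^*_\alpha} |x+z|^{-2ap^*_\alpha/p}\,\dx$ and must compare $|x+z|^{-2ap^*_\alpha/p}$ with $|x|^{-2ap^*_\alpha/p}$ for $|z|\le 1/n$. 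Since $\mathrm{supp}(\rho_n)\subset \overline{B_{1/n}}$, it suffices to prove a pointwise bound $|x+z|^{-\beta} \le C(\beta,N)\,|x|^{-\beta}$ for all $|z|\le 1$ and a.e. $x$, with $\beta = 2ap^*_\alpha/p > 0$ — but this is \emph{false} uniformly (take $x$ near $-z$). The correct route is therefore to exploit the assumption $a\in[0,\tfrac{N-sp}{2})$, which guarantees $\beta < N$ so that $|x|^{-\beta}$ is locally integrable, and to split the $z$-dependent integral into the region $\{|x|\le 2|z|\}$ (where one uses $|w_2|\in L^{p^*_\alpha}_{\mathrm{loc}}$ plus integrability of $|x+z|^{-\beta}$ near its singularity, after possibly a further splitting) and $\{|x|> 2|z|\}$ (where $|x+z|\ge |x|/2$, giving the clean bound $|x+z|^{-\beta}\le 2^\beta|x|^{-\beta}$). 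Because $|z|\le 1/n\le 1$ is bounded, the first region is controlled by a constant multiple of $\int_{B_3}\frac{|w_2(x)|^{p^*_\alpha}}{|x|^\beta}\,\dx$ uniformly in $n$; assembling the two pieces and integrating against $\rho_n$ (total mass $1$) yields the claim with $C$ independent of $n$. I expect this splitting-and-comparison of the singular weight under bounded translations to be the main technical point; everything else is Minkowski's inequality and translation invariance of Lebesgue measure.
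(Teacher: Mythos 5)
Your overall strategy (Minkowski's integral inequality followed by a pointwise translation comparison of the singular weight) breaks down at exactly the point you flag as ``the main technical point'', and the repair you sketch does not work. After Minkowski and the substitution $x\mapsto x+z$ you need, uniformly in $\abs{z}\le 1/n$,
\begin{align*}
\int_{\R^N}\frac{\abs{w_2(x)}^{p^*_{\alpha}}}{\abs{x+z}^{\beta}}\,\dx \;\le\; C \int_{\R^N}\frac{\abs{w_2(x)}^{p^*_{\alpha}}}{\abs{x}^{\beta}}\,\dx, \qquad \beta=\tfrac{2ap^*_{\alpha}}{p},
\end{align*}
and your splitting handles $\{\abs{x}>2\abs{z}\}$ correctly but not $\{\abs{x}\le 2\abs{z}\}$: there the hypothesis $\int\abs{w_2}^{p^*_{\alpha}}\abs{x}^{-\beta}\,\dx<\infty$ gives no control of $\abs{w_2}^{p^*_{\alpha}}$ against the weight $\abs{x+z}^{-\beta}$, whose singularity sits at $-z\neq 0$, where $\abs{x}^{-\beta}$ is bounded. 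Concretely, for fixed $z\ne 0$ take $w_2(x)=\abs{x+z}^{-\gamma/p^*_{\alpha}}\chi_{B_{\abs{z}/2}(-z)}(x)$ with $N-\beta<\gamma<N$ (possible for $a>0$, the only nontrivial case, since then $0<\beta<N$): the right-hand side is finite while the left-hand side is $+\infty$. So the displayed inequality is false for each fixed $z$, ``$w_2\in L^{p^*_{\alpha}}_{\mathrm{loc}}$ plus local integrability of $\abs{x+z}^{-\beta}$'' does not make the product integrable (two $L^1$ factors need not have an $L^1$ product), and no admissible constant exists. The same defect infects your part (i), because in this paper $\dxy$ \emph{is} the weighted measure $\frac{\dx}{\abs{x}^a}\frac{\dy}{\abs{y}^a}$, so your clean $C=1$ computation does not apply and you fall back on the same failed translation estimate.

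The fix is to reverse the order of the two operations. Since $\rho_n\,\dz$ is a probability measure and $t\mapsto\abs{t}^{p^*_{\alpha}}$ is convex, Jensen's inequality gives the pointwise bound $\abs{w_2\star\rho_n(x)}^{p^*_{\alpha}}\le(\abs{w_2}^{p^*_{\alpha}}\star\rho_n)(x)$; Tonelli then reduces part (ii) to the single estimate
\begin{align*}
\int_{\R^N}\frac{\rho_n(x-y)}{\abs{x}^{\beta}}\,\dx \;\le\; \frac{C(N,\beta)}{\abs{y}^{\beta}}, \qquad y\ne 0,
\end{align*}
uniformly in $n$. This is where the mollifier structure is genuinely used: for $\abs{y}\ge 2/n$ one has $\abs{x}\ge\abs{y}/2$ on $\mathrm{supp}\,\rho_n(\cdot-y)$, while for $\abs{y}<2/n$ one uses $\norm{\rho_n}_{\infty}\le Cn^N$, $\mathrm{supp}(\rho_n)\subset\overline{B_{1/n}}$ and $\beta<N$ to get a bound $Cn^{\beta}\le C'\abs{y}^{-\beta}$. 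In other words, averaging the weight against $\rho_n$ tames the singularity, whereas translating it (your route) does not. Part (i) goes the same way after separating the two weights $\abs{x+z}^{-a}$ and $\abs{y+z}^{-a}$ by Cauchy--Schwarz in $z$, using $2a<N$. This is essentially the content of Propositions 4.4 and 4.5 of Dipierro--Valdinoci, which is all the paper itself invokes for this statement.
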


\begin{proof}
    Proof follows using \cite[Proposition 4.4 and Proposition 4.5]{Density_property} and the fact that for each $n \in \N$, $\text{supp}(\rho_n) \subset B_1$. 
\end{proof}
For $u \in \cc(\R^N)$, it follows from \cite[Lemma 2.1]{Density_property} with $\frac{\del u}{\del x_i}$ that $[u]_{s,p,a}<\infty$. Moreover, using $a \in [0, \frac{N-sp}{2})$ and $\sigma < s$,
\begin{align*}
    & \norm{u}_{\Lpastar}^{\ps}= \int_{\R^N} \frac{|u(x)|^{\ps}}{|x|^{\frac{2a \ps}{p}}}\,\dx \leq \norm{u}^{\ps}_\infty \int_{\text{supp}(u)} \frac{\dx}{|x|^{\frac{2a \ps}{p}}}<\infty, \\
    & \norm{\Gr u}_{\Lasig}^{\psig}= \int_{\R^N} \frac{|\Gr u(x)|^{\psig}}{|x|^{\frac{2a \psig}{p}}}\,\dx \leq \norm{\Gr u}^{\psig}_\infty \int_{\text{supp}(u)} \frac{\dx}{|x|^{\frac{2a \psig}{p}}}<\infty.
\end{align*}
Thus we observe that $\cc(\R^N) \subset \Wsp$.

\begin{proposition}\label{Prop3.2}
   Let $N \ge 2, s \in (1,2), \sigma = s-1, p \in (1, \frac{N}{s})$, and $a \in [0, \frac{N-sp}{2})$. Let $\rho_n$ and $\zeta_n$ be defined as in \eqref{molli} and \eqref{cutoff} respectively. Then for every $u \in  \Wsp$,  $$\left[u - \left(u \star \rho_n \right)\zeta_n\right]_{s,p,a} \rightarrow 0, \text{ as } n \ra \infty.$$
\end{proposition}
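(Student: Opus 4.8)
The plan is to split the approximation $u \mapsto (u \star \rho_n)\zeta_n$ into the mollification step and the cut-off step, and control each separately in the seminorm $[\cdot]_{s,p,a}$ by a triangle inequality:
\[
[u - (u\star\rho_n)\zeta_n]_{s,p,a} \le [u - u\star\rho_n]_{s,p,a} + [(u\star\rho_n)(1-\zeta_n)]_{s,p,a}.
\]
For the first term, the key observation is that $\nabla(u\star\rho_n) = (\nabla u)\star\rho_n$ componentwise, so that, writing $w(x,y) = \nabla u(x) - \nabla u(y)$ as an $\R^N$-valued function on $\R^{2N}$, one has $\nabla(u\star\rho_n)(x) - \nabla(u\star\rho_n)(y) = (w * \rho_n)(x,y)$ in the notation of Section~3. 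Hence $[u - u\star\rho_n]_{s,p,a}^p$ is exactly the weighted Gagliardo-type norm of $w - w*\rho_n$, and since $u \in \Wsp$ means $w \in L^p(\R^{2N}; |x-y|^{-(N+\sigma p)}\dxy)$, one concludes $[u - u\star\rho_n]_{s,p,a} \to 0$ by the standard fact that mollification converges in $L^p$ of this (finite, $\sigma$-finite) measure space — here one uses density of continuous compactly supported functions in that $L^p$ space together with the uniform bound from Proposition~\ref{bound1}(i) applied to $w - w*\rho_n$, exactly as in the argument of \cite{Density_property}.

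For the cut-off term, write $v_n := u\star\rho_n$ and expand the gradient by the product rule: $\nabla\big(v_n(1-\zeta_n)\big) = (1-\zeta_n)\nabla v_n - v_n \nabla\zeta_n$. Then
\[
\big|\nabla(v_n(1-\zeta_n))(x) - \nabla(v_n(1-\zeta_n))(y)\big|
\]
is bounded, via Lemma~\ref{lemma_1}, by a sum of differences of the form $(1-\zeta_n(x))\nabla v_n(x) - (1-\zeta_n(y))\nabla v_n(y)$ and $v_n(x)\nabla\zeta_n(x) - v_n(y)\nabla\zeta_n(y)$, and each of those is split further by adding and subtracting a mixed term, producing factors like $|\nabla v_n(x) - \nabla v_n(y)|\,|1-\zeta_n(y)|$, $|\nabla v_n(x)|\,|\zeta_n(x)-\zeta_n(y)|$, $|v_n(x)-v_n(y)|\,|\nabla\zeta_n(y)|$, and $|v_n(x)|\,|\nabla\zeta_n(x)-\nabla\zeta_n(y)|$. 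Using $|1-\zeta_n| \le \mathbf 1_{B_n^c}$, the Lipschitz bounds $\|\nabla\zeta_n\|_\infty \le C/n$, $\|\nabla^2\zeta_n\|_\infty \le C/n^2$ from \eqref{estimate zeta_n}, $\mathrm{supp}(\nabla\zeta_n)\subset B_{2n}\setminus B_n$, and the weighted Hardy/Sobolev inequalities \eqref{weighted fractional}, \eqref{weighted Hardy_intro} (applied to $v_n$ and to $\nabla v_n$, which lie in the relevant weighted spaces since $v_n$ does by Proposition~\ref{bound1}(ii)), one estimates each of the four pieces by a quantity that tends to $0$: the "tail" pieces vanish by dominated convergence because $v_n, \nabla v_n$ have finite weighted integrals and $B_n^c$ shrinks to measure zero in that weighted sense, while the pieces carrying $\nabla\zeta_n$ or $\nabla^2\zeta_n$ acquire the decaying prefactors $n^{-\sigma p}$ after the natural scaling/localization of the kernel on the annulus. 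One must also control the cross terms where one argument sits inside $B_n$ and the other outside; there the kernel integrability $\iint |x-y|^{-(N+\sigma p)}$ over complementary regions gives the needed smallness.

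The main obstacle I expect is the cut-off estimate, specifically bounding the "nonlocal" cross terms in $[(v_n)(1-\zeta_n)]_{s,p,a}$ — those contributions to the double integral where $x$ and $y$ lie on opposite sides of the annulus $B_{2n}\setminus B_n$, so that neither $1-\zeta_n(x)$ nor $|\zeta_n(x)-\zeta_n(y)|$ is small pointwise and one cannot simply pull out a decaying constant. Handling these requires carefully combining the kernel decay at large $|x-y|$ with the finiteness of $\|v_n\|_{\Lpastar}$ and $\|\nabla v_n\|_{\Lasig}$ (uniformly in $n$, via Proposition~\ref{bound1}(ii) and \eqref{weighted fractional}), and using that the weighted mass of $v_n, \nabla v_n$ escaping to $B_n^c$ goes to $0$. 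This is the step where the hypothesis $a < \frac{N-sp}{2}$ and the precise exponents $p^*_s, p^*_\sigma$ are essential, and it parallels — but is technically heavier than — the corresponding computation in \cite[Section~4]{Density_property} because of the extra gradient. Once both terms are shown to vanish, the triangle inequality gives the claim.
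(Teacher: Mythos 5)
Your strategy coincides with the paper's: the same triangle-inequality split, the mollification term reduced componentwise and handled through the uniform bound of Proposition \ref{bound1}-(i) together with the convergence lemma of \cite{Density_property}, and the cut-off term expanded by the product rule into the same four types of differences (gradient differences against $1-\zeta_n$, gradient against $\zeta_n$-differences, $u_n$-differences against $\nabla\zeta_n$, and $u_n$ against $\nabla\zeta_n$-differences), estimated with the cut-off bounds \eqref{estimate zeta_n}, weighted norms and vanishing tails. So the architecture is right, and the cross terms you worry about ($x$ inside $B_n$, $y$ in the annulus, and the terms carrying $|\zeta_n(x)-\zeta_n(y)|$) are indeed disposed of essentially as you indicate, following the domain decomposition and tail estimates of \cite[Lemma 3.1]{Density_property} combined with $\|\nabla u_n-\nabla u\|_{\Lasig}\to 0$ and dominated convergence on $B_n^c$.

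The genuine gap is the annulus term $\frac{C}{n^{p}}\iint_{(B_{2n}\setminus B_n)\times(B_{2n}\setminus B_n)}\frac{|u_n(x)-u_n(y)|^p}{|x-y|^{N+\sigma p}}|\nabla\zeta_n(y)|^p$-type contribution (the paper's $F_{n,i}$), for which the toolkit you invoke does not suffice. The kernel $|x-y|^{-(N+\sigma p)}$ is not integrable across the diagonal, so you cannot separate $|u_n(x)|$ and $|u_n(y)|$ and use Hölder with $\|u_n\|_{\Lpastar}$; and the inequalities \eqref{weighted fractional}, \eqref{weighted Hardy_intro} bound weighted Lebesgue norms \emph{by} Gagliardo seminorms, i.e. they go in the wrong direction — what is needed is an upper bound for the order-$\sigma$ seminorm of $u_n$ itself localized on the annulus, and membership in $\Wsp$ only controls $[\nabla u_n]_{\sigma,p,a}$, $\|u_n\|_{\Lpastar}$ and $\|\nabla u_n\|_{\Lasig}$. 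The paper supplies the missing mechanism: Jensen's inequality with respect to $\rho_n\,\dz$, then the fundamental theorem of calculus writing $u(x-z)-u(y-z)$ as a line integral of $\nabla u$, which gains a factor $|x-y|^p$ and lowers the kernel to $|x-y|^{-(N+\sigma p-p)}$, locally integrable; then the weak Young inequality of Proposition \ref{convolution weak} with $|\cdot|^{-(N+\sigma p-p)}\in L^{\frac{N}{N+\sigma p-p},\infty}(\R^N)$ and $|\nabla u(\theta)|^p|\theta|^{-2a}\in L^{p^*_\sigma/p}(\R^N)$, followed by H\"older on the annulus and a Tonelli--Fubini justification, shows this double integral is $o(n^{p+2a})$, which beats the prefactor. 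Some argument of this kind must be added to your outline; note also that the prefactors produced by $\nabla\zeta_n$ and $\nabla^2\zeta_n$ are $n^{-p}$ and $n^{-2p}$ (not $n^{-\sigma p}$), and the whole point of the computation is to show the annulus integrals grow strictly slower than these powers.
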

\begin{proof}
   For $u \in \Wsp$ and $\rho_n \in \cc(\R^N)$, we have $u \star \rho_n \in \C^\infty(\R^N)$
   and $\left(u \star \rho_n\right) \zeta_n \in \cc(\R^N)$ (see \cite[Proposition 4.20]{Brezis_book}).
   Using the triangle inequality, we write
  \begin{align}\label{eq1}
      \left[u-\left(u \star \rho_n\right) \zeta_n \right]_{s,p,a} 
      \leq \left[u-u \star \rho_n\right]_{s,p,a} + \left[u \star \rho_n-\left(u \star \rho_n\right) \zeta_n  \right]_{s,p,a}.
  \end{align}
We now estimate the first term of \eqref{eq1}. Since $\left|\Gr u\right| \in L^{\psig}_{a}(\R^N)$, interchange of differentiation and integration gives  $\nabla (u \star \rho_n) = \nabla u \star \rho_n \in (L^{\psig}_{a}(\R^N))^N$ (applying Proposition \ref{bound1}-(ii) with $\frac{\pa u}{ \pa x_i}$). Using Lemma \ref{lemma_1}-(i), we estimate
\begin{align}\label{eq2}
    \left[u- u \star \rho_n \right]_{s,p,a}^p
    &= \iint_{\R^N \times \R^N} \frac{\left|\left(\nabla u(x)-\nabla u(y) \right) -\left(\left(\nabla u \star \rho_n\right)(x)-\left(\nabla u \star \rho_n\right) (y) \right) \right|^p}{|x-y|^{N+\sigma p}}\,\dxy \no\\
    &= \iint_{\R^N \times \R^N} \frac{\left( \sum_{i=1}^{N} \left( \frac{\partial u}{\partial x_i}(x)- \frac{\partial u}{\partial x_i}(y) 
    -\left(\frac{\partial u_n}{\partial x_i}(x)-\frac{\partial u_n}{\partial x_i}(y) \right) \right)^2 \right)^{\frac{p}{2}}}{|x-y|^{N+\sigma p}}\,\dxy \no\\
    & \le C(N,p) \iint_{\R^N \times \R^N} \frac{\sum_{i=1}^{N} \left| \frac{\partial u}{\partial x_i}(x)- \frac{\partial u}{\partial x_i}(y) 
    -\left(\frac{\partial u_n}{\partial x_i}(x)-\frac{\partial u_n}{\partial x_i}(y) \right) \right|^p}{|x-y|^{N+\sigma p}}  \,\dxy \no \\
    & = C(N,p) \sum_{i=1}^{N} \iint_{\R^N \times \R^N} \frac{\left| \frac{\partial u}{\partial x_i}(x)- \frac{\partial u}{\partial x_i}(y) 
    -\left(\frac{\partial u_n}{\partial x_i}(x)-\frac{\partial u_n}{\partial x_i}(y) \right) \right|^p}{|x-y|^{N+\sigma p}}  \,\dxy.
\end{align}
Further, using Lemma \ref{lemma_1}-(ii), we find
\begin{align*}
    \left[u\right]_{s,p,a}^p =\iint_{\R^N \times \R^N} \frac{\left|\nabla u(x) -\nabla u (y) \right|^p}{\left|x-y \right|^{N+\sigma p}}\, \dxy \geq C(N,p) \sum_{i=1}^{N}\iint_{\R^N \times \R^N} \frac{\left|\frac{\partial u}{\partial x_i}(x) -\frac{\partial u}{\partial x_i} (y) \right|^p}{\left|x-y \right|^{N+\sigma p}}\, \dxy,
\end{align*}
which implies that for each $i = 1, \cdot,\cdot,\cdot, N$, 
\begin{align}\label{wi norm}
    \iint_{\R^N \times \R^N} \left|w_i(x,y)\right|^p\, \dxy < \infty,
\end{align}
where
\begin{align}\label{wi}
    w_i(x,y) = \frac{\frac{\partial u}{\partial x_i}(x) -\frac{\partial u}{\partial x_i} (y)}{\left|x-y \right|^{\frac{N}{p}+\sigma}}, \text{ for } x,y \in \R^N.
\end{align} 
For each $i = 1, \cdot,\cdot,\cdot, N$, now we apply \cite[Lemma 6.1]{Density_property} with $\frac{\partial u}{\partial x_i}$ to obtain
\begin{equation}\label{e3}
    \lim\limits_{n \rightarrow \infty} \iint_{\R^N \times \R^N} \frac{\left| \frac{\partial u}{\partial x_i}(x)- \frac{\partial u}{\partial x_i}(y) 
    -\left(\frac{\partial u_n}{\partial x_i}(x)-\frac{\partial u_n}{\partial x_i}(y) \right) \right|^p}{|x-y|^{N+\sigma p}}\,\dxy=0. 
\end{equation}
Combining \eqref{eq2} and \eqref{e3}, we deduce that 
\begin{equation}\label{eq1.1} 
    \lim\limits_{n \rightarrow \infty}\left[u-u \star \rho_n\right]_{s,p,a} =0.
\end{equation}
Next, we estimate the second term on the right-hand side of the inequality \eqref{eq1}. Define $\varphi_n=1- \zeta_n$. Using $\nabla \left( \left(u \star \rho_n\right) \varphi_n\right) = \left(\nabla u \star \rho_n\right) \varphi_n + \left(u \star \rho_n\right) \nabla\varphi_n$ and Lemma \ref{lemma_1}-(i), we derive the following estimate:
\begin{align}\label{eq4}
    &\left[\left(u \star \rho_n\right) \varphi_n\right]_{s,p,a}^p \no\\
    & \le C \sum_{i=1}^{N} \iint_{\R^N \times \R^N} \frac{\left| \frac{\partial u_n}{\partial x_i}(x) \varphi_n(x) - \frac{\partial u_n}{\partial x_i}(y) \varphi_n(y) + u_n(x) \frac{\partial \varphi_n}{\partial x_i}(x) - u_n(y) \frac{\partial \varphi_n}{\partial x_i}(y) \right|^p}{|x-y|^{N+\sigma p}}  \,\dxy \no \\
    &\le C \Bigg( \sum\limits_{i=1}^{N} \iint_{\R^{N} \times \R^N} \frac{\left| \frac{\partial u_n}{\partial x_i}(x)\varphi_n(x) - \frac{\partial u_n}{\partial x_i}(y)\varphi_n(y) \right|^p}{|x-y|^{N+\sigma p}} \, \dxy \no \\
    & \quad \quad \quad \quad + \sum\limits_{i=1}^{N} \iint_{\R^{N} \times \R^N} \frac{\left| u_n(x) \frac{\partial \varphi_n}{\partial x_i}(x)-u_n(y)\frac{\partial \varphi_n}{\partial x_i}(y)  \right|^p}{|x-y|^{N+\sigma p}} \, \dxy \Bigg),
\end{align}
where $C=C(N,p)$.  Further, using Proposition \ref{bound1}-(i) and \eqref{wi norm}, we have
\begin{align}\label{unifrom-bound 3.8}
   \left[\frac{\partial u_n}{\partial x_i} \right]_{\sigma,p,a}^p &= \iint_{\R^{N} \times \R^N} \frac{\left| \left(\frac{\partial u}{\partial x_i}\star \rho_n\right)(x) - \left(\frac{\partial u}{\partial x_i} \star \rho_n\right)(y) \right|^p }{|x-y|^{N+\sigma p}} \, \dxy \no \\
   & =\iint_{\R^{N} \times \R^N}  \left| \int_{\R^N} \frac{\left(\frac{\partial u}{\partial x_i}(x-z) -\frac{\partial u}{\partial x_i}(y-z)\right) \rho_n(z)\,\dz}{|(x-z)-(y-z)|^{\frac{N}{p}+\sigma}} \right|^p  \, \dxy \no\\
   &=\iint_{\R^{N} \times \R^N} \left| w_i * \rho_n(x,y)\right|^p \, \dxy \leq C \iint_{\R^{N} \times \R^N} \left| w_i(x,y)\right|^p \, \dxy  \le C,
\end{align}
where $C$ does not depend on $n$. 
Since $\varphi_n(x)=0=\varphi_n(y)$ for $x,y \in B_n$, we write
\begin{align}\label{integral 3.9}
    \iint_{\R^N \times \R^N} \frac{\left|\frac{\partial u_n}{\partial x_i}(x)\varphi_n(x)-\frac{\partial u_n}{\partial x_i}(y)\varphi_n(y)\right|^p}{\abs{x-y}^{N+\sigma p}}\dxy \leq C (J_{n,i} + K_{n,i}),
\end{align}
where
\begin{align*}
    J_{n,i} &:= \iint_{\R^N \times B_n^c} \frac{\left|\frac{\partial u_n}{\partial x_i}(x)-\frac{\partial u_n}{\partial x_i}(y)\right|^p}{\abs{x-y}^{N+\sigma p}} |\varphi_n(x)|^p\dxy \\
  \text{ and }  K_{n,i}&:=\iint_{\R^N \times B_n^c} \frac{\left|\varphi_n(x)-\varphi_n(y)\right|^p}{\abs{x-y}^{N+\sigma p}}\left|\frac{\partial u_n}{\partial x_i}(y)\right|^p\dxy.
\end{align*}
Further, we define
\begin{align*}
    D_{n,0}&:= \left\{(x,y) \in \R^N \times B_n^c \text{ s.t. } |x| \leq \frac{\abs{y}}{2} \right\},\\
    D_{n,1}&:= \left\{(x,y) \in \R^N \times B_n^c \text{ s.t. } |x| > \frac{\abs{y}}{2} \text{ and } |x-y| \geq n \right\},\\
    D_{n,2}&:= \left\{(x,y) \in \R^N \times B_n^c \text{ s.t. } |x| > \frac{\abs{y}}{2} \text{ and } |x-y| < n\right\}.
\end{align*}
For $j \in \left\{0,1,2 \right\}$, we define
\begin{align*}
    K_{n,i,j}&:=\iint_{D_{n,j}} \frac{\left|\varphi_n(x)-\varphi_n(y)\right|^p}{\abs{x-y}^{N+\sigma p}}\left|\frac{\partial u_n}{\partial x_i}(y)\right|^p\dxy.
\end{align*}
Observe that
\begin{align}\label{K-n-i}
    K_{n,i}= K_{n,i,0} + K_{n,i,1} + K_{n,i,2}.
\end{align}
From the arguments as in the proof of \cite[Lemma 3.1]{Density_property}, for each $j \in \left\{0,1,2 \right\}$ we obtain
\begin{align*}
    K_{n,i,j} \leq C \norm{\frac{\partial u_n}{\partial x_i}}_{L_a^{\psig}(B_n^c)}^{\frac{N-\sigma p}{N}},
\end{align*}
where $C$ does not depend on $n$, and hence 
\begin{align}\label{estimate 1}
    K_{n,i,j} &\leq C\left( \norm{\frac{\partial u_n}{\partial x_i}-\frac{\partial u}{\partial x_i}}_{L_a^{\psig}(B_n^c)}^{\frac{N-\sigma p}{N}}+ \norm{\frac{\partial u}{\partial x_i}}_{L_a^{\psig}(B_n^c)}^{\frac{N-\sigma p}{N}}\right)\no \\
    &\leq C\left( \norm{\frac{\partial u_n}{\partial x_i}-\frac{\partial u}{\partial x_i}}_{L_a^{\psig}(\R^N)}^{\frac{N-\sigma p}{N}}+ \norm{\frac{\partial u}{\partial x_i}}_{L_a^{\psig}(B_n^c)}^{\frac{N-\sigma p}{N}}\right).
\end{align}
Since $|\nabla u| \in L^{p_\sigma^*}_a(\R^N)$, using Proposition \ref{bound1}-(ii) we have $ \frac{\partial u_n}{\partial x_i} \in L^{p_\sigma^*}_a(\R^N)$. As $n \ra \infty$, the first term of \eqref{estimate 1} tends to zero using \cite[Lemma 6.1]{Density_property} with $\frac{\partial u}{\partial x_i}$, and the second term of \eqref{estimate 1} tends to zero using  $\frac{\partial u}{\partial x_i} \in \Lasig$. Hence 
\begin{align}\label{k-n-i-convergence}
    \lim\limits_{n \ra \infty} K_{n,i}=0.
\end{align}
Using $0\leq \varphi_n \leq 1$ and the triangle inequality, we have
\begin{align}\label{eq3.12}
    J_{n,i} &\leq C(p) \bigg( \iint_{\R^N \times B_n^c} \frac{\left|\frac{\partial u_n}{\partial x_i}(x)-\frac{\partial u_n}{\partial x_i}(y)-\left(\frac{\partial u}{\partial x_i}(x)-\frac{\partial u}{\partial x_i}(y)\right)\right|^p}{\abs{x-y}^{N+\sigma p}} \dxy \no\\
    &\hspace{3cm} + \iint_{\R^N \times B_n^c} \frac{\left|\frac{\partial u}{\partial x_i}(x)-\frac{\partial u}{\partial x_i}(y)\right|^p}{\abs{x-y}^{N+\sigma p}} \dxy\bigg) \no\\
    &\leq C(p) \bigg( \iint_{\R^N \times \R^N } \frac{\left|\frac{\partial u_n}{\partial x_i}(x)-\frac{\partial u_n}{\partial x_i}(y)-\left(\frac{\partial u}{\partial x_i}(x)-\frac{\partial u}{\partial x_i}(y)\right)\right|^p}{\abs{x-y}^{N+\sigma p}} \dxy \no\\
    &\hspace{3cm} + \iint_{\R^N \times B_n^c} \frac{\left|\frac{\partial u}{\partial x_i}(x)-\frac{\partial u}{\partial x_i}(y)\right|^p}{\abs{x-y}^{N+\sigma p}} \dxy\bigg).
\end{align}
As $n \ra \infty$, the first integral of \eqref{eq3.12} tends to zero using \eqref{e3}, and the second integral of \eqref{eq3.12} tends to zero using 
\begin{align*}
    \iint_{\R^N \times \R^N} \frac{\left|\frac{\partial u}{\partial x_i}(x)-\frac{\partial u}{\partial x_i}(y)\right|^p}{\abs{x-y}^{N+\sigma p}} \dxy < \infty.
\end{align*}
Therefore, 
\begin{align}\label{j-n-i-convergence}
    \lim\limits_{n \ra \infty} J_{n,i}=0.
\end{align}
From \eqref{integral 3.9}, \eqref{k-n-i-convergence} and \eqref{j-n-i-convergence}, we deduce that
\begin{equation}\label{eq5}
    \lim\limits_{n \rightarrow \infty} \iint_{\R^N \times \R^N} \frac{\left|\frac{\partial u_n}{\partial x_i}(x)\varphi_n(x)-\frac{\partial u_n}{\partial x_i}(y)\varphi_n(y)\right|^p}{\abs{x-y}^{N+\sigma p}}\dxy=0.
\end{equation}
We now turn our attention to estimate the final term of \eqref{eq4}. Denote $\psi_{n, i}:= \frac{\partial \varphi_n}{\partial x_i}$. By noticing that $\text{supp}(\psi_{n,i}) \subset \overline{B_{2n}\setminus B_n}$ (since $\text{supp}(\varphi_n) \subset B_n^c$ and $\varphi_n = 1$ in $B_{2n}^c$), for each $i$, we split the following integral
\begin{align}\label{Im}
    I_{n,i} & := \iint_{\R^{N} \times \R^N} \frac{ \left| u_n(x) \psi_{n,i}(x)-u_n(y)\psi_{n,i}(y)  \right|^p}{|x-y|^{N+\sigma p}} \, \dxy \no \\
    &= 2I_{n,i,1} + 2I_{n,i,2} + 2I_{n,i,3} + 2I_{n,i,4} + I_{n,i,5},
\end{align}
where 
\begin{align*}
    I_{n,i,1} &:= \iint_{B_n \times \left(B_{2n} \setminus B_n \right)} \frac{ \left| u_n(x) \psi_{n,i}(x)-u_n(y)\psi_{n,i}(y)  \right|^p}{|x-y|^{N+\sigma p}} \, \dxy\\
    &= \iint_{B_n \times \left(B_{2n} \setminus B_n \right)} \frac{ \left|u_n(y)\psi_{n,i}(y) \right|^p}{|x-y|^{N+\sigma p}} \, \dxy,\\
    I_{n,i,2} &:= \iint_{B_n \times B_{2n}^c } \frac{ \left| u_n(x) \psi_{n,i}(x)-u_n(y)\psi_{n,i}(y)  \right|^p}{|x-y|^{N+\sigma p}} \, \dxy=0 ,\\
    I_{n,i,3} &:= \iint_{\left(B_{2n} \setminus B_n \right) \times \left(B_{2n} \setminus B_n \right)} \frac{ \left| u_n(x) \psi_{n,i}(x)-u_n(y)\psi_{n,i}(y)  \right|^p}{|x-y|^{N+\sigma p}} \, \dxy,\\
    I_{n,i,4} &:= \iint_{\left(B_{2n} \setminus B_n \right) \times  B_{2n}^c } \frac{ \left| u_n(x) \psi_{n,i}(x)-u_n(y)\psi_{n,i}(y)  \right|^p}{|x-y|^{N+\sigma p}} \, \dxy\\
    &=\iint_{\left(B_{2n} \setminus B_n \right) \times B_{2n}^c} \frac{ \left| u_n(x) \psi_{n,i}(x)\right|^p}{|x-y|^{N+\sigma p}} \, \dxy,\\
    I_{n,i,5} &:= \iint_{B_{2n}^c \times B_{2n}^c} \frac{ \left| u_n(x) \psi_{n,i}(x)-u_n(y)\psi_{n,i}(y)  \right|^p}{|x-y|^{N+\sigma p}} \, \dxy =0.
\end{align*}
\textbf{Estimation of $I_{n,i,1}$:} We infer from \eqref{estimate zeta_n} that $\|\nabla^2 \varphi_n\|_{\infty} \leq \frac{C(N)}{n^2}$. Therefore, applying the mean value theorem,
\begin{align}\label{I_ni1 part1}
    I_{n,i,1}  &=  \iint_{B_n \times \left(B_{2n} \setminus B_n \right)} \frac{ \left|u_n(y)\right|^p \left|\psi_{n,i}(x)-\psi_{n,i}(y) \right|^p}{|x-y|^{N+\sigma p}} \, \dxy \no\\
    & \leq \frac{C(N,p)}{n^{2p}} \iint_{B_n \times \left(B_{2n} \setminus B_n \right)} \frac{ \left|u_n(y)\right|^p }{|x-y|^{N+\sigma p-p}} \, \dxy \no\\
     & \leq \frac{C(N,p)}{n^{2p}} \Bigg\{ \iint_{B_{\frac{n}{2}} \times \left(B_{2n} \setminus B_n \right)} \frac{ \left|u_n(y)\right|^p }{|x-y|^{N+\sigma p-p}} \, \dxy \no\\
     &\hspace{3cm} + \iint_{(B_n \setminus B_{\frac{n}{2}}) \times \left(B_{2n} \setminus B_n \right)} \frac{ \left|u_n(y)\right|^p }{|x-y|^{N+\sigma p-p}} \, \dxy\Bigg\}.
\end{align}
Notice that for $x \in B_{\frac{n}{2}}$ and $y \in B_{2n} \setminus B_n$, we have $\abs{y}> \abs{x}$ and
\begin{align*}
     \abs{x-y} \geq \abs{y}-\abs{x} = \frac{\abs{y}}{2}+\frac{\abs{y}}{2}-\abs{x} \geq \frac{\abs{y}}{2}.
\end{align*}
Thus, we obtain
\begin{align}\label{I_ni1 part2}
    &\iint_{B_{\frac{n}{2}} \times \left(B_{2n} \setminus B_n \right)} \frac{ \left|u_n(y)\right|^p }{|x-y|^{N+\sigma p-p}} \, \dxy \leq C \iint_{B_{\frac{n}{2}} \times \left(B_{2n} \setminus B_n \right)} \frac{ \left|u_n(y)\right|^p }{|y|^{N+\sigma p-p}} \, \frac{\dx\dy}{\abs{x}^{2a}} \no\\
    & \leq C \left(\int_{B_{\frac{n}{2}}} \frac{\dx}{\abs{x}^{2a}} \right) \left(\int_{B_{2n} \setminus B_n} \frac{ \left|u_n(y)\right|^p }{|y|^{N+\sigma p-p}}\,\dy  \right) \no \\
    & \leq C n^{N-2a} \left(\int_{B_{2n} \setminus B_n} \frac{ \left|u_n(y)\right|^{\ps} }{|y|^{\frac{2a\ps}{p}}}\,\dy  \right)^{\frac{p}{\ps}} \left(\int_{B_{2n} \setminus B_n} \left(\frac{ \abs{y}^{2a} }{|y|^{N+\sigma p-p}}\right)^{\frac{N}{sp}}\,\dy  \right)^{\frac{sp}{N}}  \text{ (by H\"{o}lder's inequality) } \no \\
    & \leq C n^{N-2a} \frac{ n^{2a}}{n^{N+\sigma p-p}}\left(\int_{B_{2n} \setminus B_n} \frac{ \left|u_n(y)\right|^{\ps} }{|y|^{\frac{2a\ps}{p}}}\,\dy  \right)^{\frac{p}{\ps}} \left(\int_{B_{2n} \setminus B_n} \,\dy  \right)^{\frac{sp}{N}}  \no \\
    & \leq C n^{N-2a} \frac{ n^{2a+sp}}{n^{N+\sigma p-p}}\left(\int_{B_{2n} \setminus B_n} \frac{ \left|u_n(y)\right|^{\ps} }{|y|^{\frac{2a\ps}{p}}}\,\dy  \right)^{\frac{p}{\ps}} = C n^{2p} \left(\int_{B_{2n} \setminus B_n} \frac{ \left|u_n(y)\right|^{\ps} }{|y|^{\frac{2a\ps}{p}}}\,\dy
    \right)^{\frac{p}{\ps}},
\end{align}
where $C=C(N,p,\sigma,a)$.
Further, notice that for $x \in B_n \setminus B_{\frac{n}{2}}$ and $y \in B_{2n} \setminus B_n$, we have $\abs{y}\geq \abs{x}$ and $B_n \setminus B_{\frac{n}{2}} \subset B_{4n}(y)$. 
Hence we estimate 
\begin{align}
   &\iint_{(B_n \setminus B_{\frac{n}{2}}) \times \left(B_{2n} \setminus B_n \right)} \frac{ \left|u_n(y)\right|^p }{|x-y|^{N+\sigma p-p}} \, \dxy \leq \iint_{(B_n \setminus B_{\frac{n}{2}})\times \left(B_{2n} \setminus B_n \right)} \frac{ \left|u_n(y)\right|^p }{|x-y|^{N+\sigma p-p}} \, \frac{\dx\dy}{\abs{x}^{2a}} \no \\
    &\leq \frac{C}{n^{2a}} \int_{B_{2n} \setminus B_n } \abs{u_n(y)}^p  \left(\int_{B_n \setminus B_{\frac{n}{2}}} \frac{\dx}{\abs{x-y}^{N+\sigma p -p}}\right) \,\dy \no\\
    & \leq \frac{C}{n^{2a}} \int_{B_{2n} \setminus B_n } \abs{u_n(y)}^p  \left( \int_{B_{4n}(y)} \frac{\dx}{\abs{x-y}^{N+\sigma p -p}}\right) \,\dy \no\\
    & \leq C \frac{n^{p -\sigma p}}{n^{2a}} \int_{B_{2n} \setminus B_n } \frac{\abs{u_n(y)}^p}{\abs{y}^{2a}} \abs{y}^{2a} \,\dy  \no\\
    & \leq C \frac{n^{p -\sigma p}}{n^{2a}} \left(\int_{B_{2n} \setminus B_n } \frac{\abs{u_n(y)}^{\ps}}{\abs{y}^{\frac{2a \ps}{p}}} \,\dy \right)^{\frac{p}{\ps}} \left(\int_{B_{2n} \setminus B_n } \abs{y}^{\frac{2aN}{sp}} \,\dy \right)^{\frac{sp}{N}} \text{  (by H\"{o}lder's inequality)}  \no \\
    & \leq C \frac{n^{p -\sigma p +2a +sp}}{n^{2a}}\left(\int_{B_{2n} \setminus B_n } \frac{\abs{u_n(y)}^{\ps}}{\abs{y}^{\frac{2a \ps}{p}}} \,\dy \right)^{\frac{p}{\ps}} =  C n^{2p} \left(\int_{B_{2n} \setminus B_n } \frac{\abs{u_n(y)}^{\ps}}{\abs{y}^{\frac{2a \ps}{p}}} \,\dy \right)^{\frac{p}{\ps}},
\end{align}
where $C=C(N,p,s,a)$. Hence
\begin{align}\label{I_ni1 part3}
    \iint_{(B_n \setminus B_{\frac{n}{2}}) \times \left(B_{2n} \setminus B_n \right)} \frac{ \left|u_n(y)\right|^p }{|x-y|^{N+\sigma p-p}} \, \dxy \leq C n^{2p}\left(\int_{B_{2n} \setminus B_n } \frac{\abs{u_n(y)}^{\ps}}{\abs{y}^{\frac{2a \ps}{p}}} \,\dy \right)^{\frac{p}{\ps}}.
\end{align}
From \eqref{I_ni1 part1}, \eqref{I_ni1 part2} and \eqref{I_ni1 part3}, we get the following inequality
\begin{align*}
    I_{n,i,1} \leq C(N,p, \sigma,a) \left(\int_{B_{2n} \setminus B_n } \frac{\abs{u_n(y)}^{\ps}}{\abs{y}^{\frac{2a \ps}{p}}} \,\dy \right)^{\frac{p}{\ps}}.
\end{align*}
Using the arguments as in \cite[Lemma 6.1]{Density_property}, we have 
\begin{align}\label{convolution in Weighted Lp}
    \lim\limits_{n \ra \infty}\norm{u - u_n}_{L^{\ps}_a(\R^N)}=0.
\end{align}
Thus,
\begin{align*}
    \int_{B_{2n} \setminus B_n } \frac{\abs{u_n(y)}^{\ps}}{\abs{y}^{\frac{2a \ps}{p}}} \,\dy  &\leq C \int_{B_{2n} \setminus B_n } \frac{\abs{u_n(y)-u(y)}^{\ps}}{\abs{y}^{\frac{2a \ps}{p}}} \,\dy + C\int_{B_{2n} \setminus B_n } \frac{\abs{u(y)}^{\ps}}{\abs{y}^{\frac{2a \ps}{p}}} \,\dy\\
    &\leq C \int_{\R^N } \frac{\abs{u(y)-u_n(y)}^{\ps}}{\abs{y}^{\frac{2a \ps}{p}}} \,\dy + C\int_{B_n^c } \frac{\abs{u(y)}^{\ps}}{\abs{y}^{\frac{2a \ps}{p}}} \,\dy.
\end{align*} 
The first integral on the right-hand side of the above inequality converges to $0$ as $n \ra \infty$ due to \eqref{convolution in Weighted Lp} and the second integral also converges to $0$ as $n \ra \infty$ by dominated convergence theorem. Hence, we conclude that $\lim\limits_{n \ra \infty} I_{n,i,1}=0$.

\noindent \textbf{Estimation of $I_{n,i,4}$:} Using the fact that $\psi_{n,i} =0$ in $B_{2n}^c$, we write
\begin{align*}
    I_{n,i,4} & = \iint_{\left(B_{2n} \setminus B_n \right) \times \left(B_{3n}^c \right)} \frac{ \left| u_n(x) \psi_{n,i}(x) \right|^p}{|x-y|^{N+\sigma p}} \, \dxy \\
    & \hspace{3cm} + \iint_{\left(B_{2n} \setminus B_n \right) \times \left(B_{3n} \setminus B_{2n} \right)} \frac{ \left| u_n(x) \right|^p \left| \psi_{n,i}(x)- \psi_{n,i}(y)\right|^p}{|x-y|^{N+\sigma p}} \, \dxy\\
    & \leq \underbrace{\frac{C}{n^p}\iint_{\left(B_{2n} \setminus B_n \right) \times \left(B_{3n}^c \right)} \frac{ \left| u_n(x) \right|^p }{|x-y|^{N+\sigma p}} \, \dxy}_{A_{n,i}} + \underbrace{\frac{C}{n^{2p}} \iint_{\left(B_{2n} \setminus B_n \right) \times \left(B_{3n} \setminus B_{2n} \right)} \frac{ \left| u_n(x) \right|^p }{|x-y|^{N+\sigma p-p}} \, \dxy}_{B_{n,i}},
\end{align*}
where $C=C(N,p)$. For $A_{n,i}$, we use $\|\nabla \varphi_n\|_{\infty} \leq \frac{C(N)}{n}$, and  $B_{n,i}$ follows using the mean value theorem and $\|\nabla^2 \varphi_n\|_{\infty} \leq \frac{C(N)}{n^2}$. Notice that $B_n(x) \subset B_{3n}$ for $x \in B_{2n} \setminus B_n$. Thus, we get
\begin{align}\label{A-n-i}
    A_{n,i} &= \frac{C(N,p)}{n^p}\int_{B_{2n} \setminus B_n} \left| u_n(x) \right|^p  \left(\int_{B_{3n}^c}\frac{ \dy }{|x-y|^{N+\sigma p} \abs{y}^a} \right)\,\frac{\dx}{\abs{x}^a} \no\\
    & \leq \frac{C(N,p)}{n^{p+a}} \int_{B_{2n} \setminus B_n} \left| u_n(x) \right|^p  \left(\int_{B_n^c(x)}\frac{ \dy }{|x-y|^{N+\sigma p}} \right)\,\frac{\dx}{\abs{x}^a}\no\\
    &\leq \frac{C(N,p, \sigma)}{n^{p+\sigma p+a}}\int_{B_{2n} \setminus B_n} \left| u_n(x) \right|^p \, \frac{\dx}{\abs{x}^a}.
\end{align}
Next, we estimate the second integral, i.e.,
\begin{align*}
   B_{n,i}&= \frac{C(N,p)}{n^{2p}} \iint_{\left(B_{2n} \setminus B_n \right) \times \left(B_{3n} \setminus B_{2n} \right)} \frac{ \left| u_n(x) \right|^p }{|x-y|^{N+\sigma p-p}} \, \dxy\\
   &\le  \frac{C(N,p)}{n^{2p+a}} \int_{B_{2n} \setminus B_n } \left| u_n(x) \right|^p \left(\int_{ B_{3n} \setminus B_{2n}} \frac{ \dy }{|x-y|^{N+\sigma p-p}}  \right)\,\frac{\dx}{\abs{x}^a}.
\end{align*}
Observe that $B_{3n} \subset B_{6n}(x)$ for $x \in B_{2n} \setminus B_n$, and hence 
\begin{align*}
    \int_{ B_{3n} \setminus B_{2n}} \frac{ \dy }{|x-y|^{N+\sigma p-p}}  \leq \int_{ B_{6n}(x)} \frac{ \dy }{|x-y|^{N+\sigma p-p}}  \leq C(N,p, \sigma) n^{p-\sigma p}.
\end{align*}
Thus, we obtain
\begin{align}\label{B-n-i}
   B_{n,i} \le \frac{C(N,p, \sigma)}{n^{p+\sigma p+a}} \int_{B_{2n} \setminus B_n } \left| u_n(x) \right|^p\,\frac{\dx}{\abs{x}^a}.
\end{align}
Further, using H\"{o}lder's inequality, we estimate 
\begin{align}\label{estimate-s}
    &\int_{B_{2n} \setminus B_n } \left| u_n(x) \right|^p\,\frac{\dx}{\abs{x}^a} = \int_{B_{2n} \setminus B_n } \frac{\abs{u_n(x)}^p}{\abs{x}^{2a}} \abs{x}^a \, \dx \no \\
    &\le C n^{a+sp} \left( \int_{B_{2n} \setminus B_n } \frac{\abs{u_n(x)}^{\ps}}{\abs{x}^{\frac{2a\ps}{p}}} \, \dx \right)^{\frac{p}{\ps}} \le C n^{a+sp} \left( \int_{B_n^c } \frac{\abs{u_n(x)}^{\ps}}{\abs{x}^{\frac{2a\ps}{p}}} \, \dx \right)^{\frac{p}{\ps}},
\end{align}
for some $C=C(N,p,s)$. Therefore, in view of \eqref{A-n-i},\eqref{B-n-i}, and using \eqref{convolution in Weighted Lp} and dominated convergence theorem we get 
\begin{align*}
    \lim\limits_{n \rightarrow \infty} A_{n,i} = \lim\limits_{n \rightarrow \infty} B_{n,i} = 0.
\end{align*}
Thus, $\lim\limits_{n \rightarrow \infty} I_{n,i,4}=0$.

\noindent \textbf{Estimation of $I_{n,i,3}$:} We break the integral $I_{n,i,3}$ into two parts as follows
\begin{align*}
    I_{n,i,3} & \leq C(p) \Bigg\{ \underbrace{\iint_{\left(B_{2n} \setminus B_n \right) \times \left(B_{2n} \setminus B_n \right)} \frac{ \left|  \psi_{n,i}(x)-\psi_{n,i}(y)  \right|^p }{|x-y|^{N+\sigma p}}|u_n(x)|^p \, \dxy}_{E_{n,i}} \\
    & \hspace{3cm}+ \underbrace{\iint_{\left(B_{2n} \setminus B_n \right) \times \left(B_{2n} \setminus B_n \right)} \frac{ \left| u_n(x) -u_n(y) \right|^p}{|x-y|^{N+\sigma p}} |\psi_{n,i}(y)|^p\, \dxy}_{F_{n,i}} \Bigg\}.
\end{align*}
Using the mean value theorem and $\|\nabla^2 \varphi_n\|_{\infty} \leq \frac{C(N)}{n^2}$, we estimate $E_{n,i}$ as 
\begin{align*}
   E_{n,i} & \leq \frac{C(N,p)}{n^{2p}} \iint_{\left(B_{2n} \setminus B_n \right) \times \left(B_{2n} \setminus B_n \right)} \frac{ |u_n(x)|^p }{|x-y|^{N+\sigma p-p}} \, \dxy \\
   & \le \frac{C(N,p)}{n^{2p+a}} \int_{B_{2n} \setminus B_n } |u_n(x)|^p \left(\int_{B_{2n} \setminus B_n}\frac{ \dy }{|x-y|^{N+\sigma p-p}} \right)\, \frac{\dx}{\abs{x}^a}.
\end{align*}
Observe that $B_{2n} \subset B_{4n}(x)$, for $x \in B_{2n} \setminus B_n$. Thus using \eqref{estimate-s} we get
\begin{align*}
    E_{n,i} &\leq \frac{C(N,p)}{n^{2p+a}} \int_{B_{2n} \setminus B_n } |u_n(x)|^p \left(\int_{B_{4n}(x)}\frac{ \dy }{|x-y|^{N+\sigma p-p}} \right)\, \frac{\dx}{\abs{x}^a}\\
    &\leq \frac{C(N,p, \sigma)}{n^{p+\sigma p+a}} \int_{B_{2n} \setminus B_n } |u_n(x)|^p \,\frac{\dx}{\abs{x}^a} \le C(N,p, \sigma) \left( \int_{B_{2n} \setminus B_n } \frac{\abs{u_n(x)}^{\ps}}{\abs{x}^{\frac{2a\ps}{p}}} \, \dx \right)^{\frac{p}{\ps}}.
\end{align*}
Using \eqref{convolution in Weighted Lp} and subsequent arguments, it follows that $\lim\limits_{n \ra \infty}{E_{n,i}}=0$. Let us now consider the second integral, i.e.,
\begin{align}\label{eq7}
   F_{n,i} &= \iint_{\left(B_{2n} \setminus B_n \right) \times \left(B_{2n} \setminus B_n \right)} \frac{ \left| u_n(x) -u_n(y) \right|^p}{|x-y|^{N+\sigma p}} |\psi_{n,i}(y)|^p\, \dxy \no \\
   & \le \frac{C(N,p)}{n^{p+2a}} \iint_{\left(B_{2n} \setminus B_n \right) \times \left(B_{2n} \setminus B_n \right)} \frac{ \left| u_n(x) -u_n(y) \right|^p}{|x-y|^{N+\sigma p}} \, \dx\dy,
\end{align}
where the inequality holds since $\abs{\psi_{n,i}(y)} \le \frac{C(N)}{n}$. 
Applying Jensen's inequality ($\rho_n \dz$ is a probability measure and $\text{supp}(\rho_n) \subset \overline{B_{\frac{1}{n}}}$), we get 
\begin{align}\label{eq8}
    \left|u_n(x) -u_n(y)\right|^p &= \left|\int_{\R^N} \rho_n(z)u(x-z)\,\mathrm{d}z - \int_{\R^N} \rho_n(z)u(y-z)\,\mathrm{d}z \right|^p \no\\
    &= \left|\int_{B_{\frac{1}{n}}} \left( u(x-z) - u(y-z) \right)\rho_n(z)\,\mathrm{d}z \right|^p \no\\
    &\leq \int_{B_{\frac{1}{n}}} \left|u(x-z) - u(y-z)\right|^p\rho_n(z)\,\mathrm{d}z.
\end{align}
From \eqref{eq7} and \eqref{eq8}, 
\begin{align*}
    F_{n,i} \le \frac{C(N,p)}{n^{p+2a} } \iint_{\left(B_{2n} \setminus B_n \right) \times \left(B_{2n} \setminus B_n \right)} \left( \int_{B_{\frac{1}{n}}} \left|u(x-z) - u(y-z)\right|^p\rho_n(z)\,\mathrm{d}z \right) \frac{\dx\dy}{\abs{x-y}^{N+ \sigma p}}.
\end{align*}
By the fundamental theorem of calculus and Jensen's inequality,
\begin{align}\label{Fub1}
    F_{n,i} & \le \frac{C(N,p)}{n^{p+2a} } \iint_{\left(B_{2n} \setminus B_n \right) \times \left(B_{2n} \setminus B_n \right)} \left( \int_{B_{\frac{1}{n}}} \left( \int_0^{1} \abs{\Gr u(\theta)} \, \dt \right)^{p} \rho_n(z)\,\mathrm{d}z \right) \frac{\dx\dy}{\abs{x-y}^{N+ \sigma p -p}} \no\\
    & \le \frac{C(N,p)}{n^{p+2a} } \iint_{\left(B_{2n} \setminus B_n \right) \times \left(B_{2n} \setminus B_n \right)} \left( \int_{B_{\frac{1}{n}}} \left( \int_0^{1} \abs{\Gr u(\theta)}^p \, \dt \right) \rho_n(z)\,\mathrm{d}z \right) \frac{\dx\dy}{\abs{x-y}^{N+ \sigma p -p}},
\end{align}
where $\theta = t(x-z)+(1-t)(y-z)$.
Consider the following function
\begin{align*}
    A(t,x,y,z) = \frac{ \left| \Gr u(\theta) \right|^p}{|x-y|^{N+\sigma p-p}}  \rho_n(z), \text{ where } x,y \in B_{2n} \setminus B_n, z \in B_{\frac{1}{n}} \text{ and } t\in (0,1). 
\end{align*}
Observe that $\theta = \theta(t,x,y,z)$ and
\begin{align}\label{bound for theta}
\abs{\theta} \le t \left( \abs{x} + \abs{z} \right) + (1-t) \left(  \abs{y} + \abs{z} \right) \le 2n + \frac{1}{n}.
\end{align}
Now for fixed $t \in (0,1)$ and $z \in B_{\frac{1}{n}}$ we estimate the following integral 
\begin{align*}
   \iint_{\left(B_{2n} \setminus B_n \right) \times \left(B_{2n} \setminus B_n \right)} \frac{\abs{\Gr u(\theta(t, \cdot, \cdot, z))}^p}{|x-y|^{N+\sigma p-p}} \, \dx \dy.
\end{align*}
For fixed $x \in B_{2n} \setminus B_n$, we consider the following function 
\begin{align*}
  f_1(y) = \frac{\abs{\Gr u(\theta(t,x, \cdot, z))}^p}{\abs{\theta(t,x, \cdot, z)}^{2a}},  \text{ for } y \in \R^N.
\end{align*}
Then using $\norm{\Gr u}_{L_a^{p^*_{\sigma}}(\R^N)} < \infty$ we see that 
\begin{align*}
    \int_{\R^N} f_1(y)^{\frac{p^*_{\sigma}}{p}} \, \dy \le (1-t)^{-N} \int_{\R^N} \frac{\abs{\Gr u(\theta(t,x, \cdot, z))}^{p^*_{\sigma}}}{\abs{\theta(t,x, \cdot, z)}^{\frac{2ap^*_{\sigma}}{p}}} \, \rm{d} \theta < \infty.  
\end{align*}
Also by Example \ref{example-weak-Lp}, we have 
\begin{align*}
    g_1(\eta) = \frac{1}{\abs{\eta}^{N+\sigma p-p}} \in L^{{\frac{N}{N+\sigma p-p}},\infty}(\R^N).
\end{align*}
Therefore, by Proposition \ref{convolution weak}, $h_1(\eta) = \int_{\R^N} g_1(\eta-y) f_1(y)\, \dy < \infty $ for a.e. $\eta \in \R^N$ and moreover
\begin{align*}
    h_1 \in L^r(\R^N), \text{ where } \frac{1}{r} = \frac{p}{p^*_{\sigma}} + \frac{N+\sigma p-p}{N} -1 = \frac{N-p}{N}.
\end{align*}
Hence for $x \in B_{2n} \setminus B_n$ using \eqref{bound for theta} we get 
\begin{align}\label{h_2 bound}
  h_2(x) := \int_{B_{2n} \setminus B_n} \abs{\Gr u(\theta(t,x, \cdot, z))}^p  g_1(x-y) \, \dy & \le c_1(n) \int_{B_{2n} \setminus B_n} g_1(x-y) f_1(y)\, \dy \no \\
  & \le c_1(n) h_1(x),
\end{align}
where $c_1(n) = \left( 2n + \frac{1}{n} \right)^{2a}$. Further, using the H\"{o}lder's inequality, 
\begin{align*}
    \int_{B_{2n}\setminus B_n} h_2(x) \, \dx \le C(N,p) c_1(n) \norm{h_1}_{r} n^{\frac{N}{r'}}.
\end{align*}
Therefore, applying Tonelli's theorem, for a.e. $z \in B_{\frac{1}{n}}$ and $t \in (0,1)$ we get 
\begin{align}\label{Tonelli_2}
   B(t,z):= \iint_{\left(B_{2n} \setminus B_n \right) \times \left(B_{2n} \setminus B_n \right)} \frac{ \left| \nabla u(\theta(t,\cdot, \cdot, z))\right|^p}{|x-y|^{N+\sigma p-p}} \, \dx \dy < \infty.
\end{align}
Further, Fubini's theorem yields
\begin{align}\label{eq7.5}
    B(t,z) &= \int_{B_{2n} \setminus B_n} \left( \int_{B_{2n} \setminus B_n} \frac{ \left| \nabla u(\theta(t,\cdot, \cdot, z))\right|^p}{|x-y|^{N+\sigma p-p}} \, \dy \right) \, \dx \no \\
    & =  \int_{B_{2n} \setminus B_n} h_2(x)\, \dx \leq c_1(n) \int_{B_{2n} \setminus B_n} h_1(x)\, \dx \text{  (using \eqref{h_2 bound})} \no \\
    & \leq c_1(n) \left(\int_{B_{2n} \setminus B_n} h_1(x)^r\, \dx \right)^{\frac{1}{r}} \left(\int_{B_{2n} \setminus B_n} \, \dx \right)^{\frac{p}{N}} \no \\ 
    & \leq C(N,p) c_1(n) n^p \left(\int_{B_n^c} h_1(x)^r\, \dx \right)^{\frac{1}{r}}.
\end{align}
Notice that $h_1(x) \equiv h_1(t,x,z)$, and for every $z \in B_{\frac{1}{n}}$ and $t\in (0,1)$, since $h_1(t,\cdot, z) \in L^r(\R^N)$, we have  
\begin{align}\label{convergence of h_1}
    \left(\int_{B_n^c} h_1(t,\cdot,z)^r\, \dx \right)^{\frac{1}{r}} \ra 0, \text{ as } n \ra \infty.
\end{align} 
Then there exists $n_0 \in \N$ such that for $n \ge n_0$, $\int_{B_n^c} h_1(t,\cdot,z)^r\, \dx  < 1$ for every $t\in (0,1)$ and $z \in B_{\frac{1}{n}}$. Therefore,
\begin{align*}
    f(t) := \int_{B_{\frac{1}{n}}} \rho_n(z)  B(t,z) \, \dz & \le C(N,p) c_1(n) n^p \int_{B_{\frac{1}{n}}} \rho_n(z) \left(\int_{B_n^c} h_1(t,\cdot, z)^r\, \dx \right)^{\frac{1}{r}} \, \dz \\
    & \le C(N,p) c_1(n) n^p \int_{B_{\frac{1}{n}}} \rho_n(z) \, \dz  =  C(N,p) c_1(n) n^p,
\end{align*}
and $ \int_0^1 f(t) \, \dt \le C(N,p) c_1(n) n^p$.
Hence for $n \ge n_0$ using Tonneli's theorem,
$$A \in L^1\left( \left(B_{2n} \setminus B_n  \right) \times\left(B_{2n} \setminus B_n  \right) \times B_{\frac{1}{n}} \times (0,1) \right),$$
and subsequently using Fubini's theorem, from \eqref{Fub1} we obtain
\begin{align}\label{eq9}
   F_{n,i} \leq \frac{C(N,p)}{n^{p+2a}}\int_0^1 \left( \int_{B_{\frac{1}{n}}} \left( \iint_{\left(B_{2n} \setminus B_n \right) \times \left(B_{2n} \setminus B_n \right)} \frac{ \left| \Gr u(\theta) \right|^p}{|x-y|^{N+\sigma p-p}} \, \dx \dy \right)\rho_n(z)\,\mathrm{d}z\right) \,{\rm d}t.
\end{align}
Take $\ep>0$. Then using the convergence in \eqref{convergence of h_1} there exists $n_1 (\ge n_0) \in \N$ such that for $n \ge n_1$, using \eqref{eq7.5} and \eqref{eq9}, we conclude
\begin{align*}
     F_{n,i} &\leq C(N,p) \frac{c_1(n) n^p}{n^{p+2a}} \int_0^1 \left(\int_{B_{\frac{1}{n}}} \left( \int_{B_n^c} h_1(t,\cdot,z)^r\, \dx  \right)^{\frac{1}{r}} \rho_n(z)\,\mathrm{d}z\right)\,{\rm d}t \\
     &\le C(N,p) \int_0^1 \left(\int_{B_{\frac{1}{n}}} \left( \int_{B_n^c} h_1(t,\cdot,z)^r\, \dx  \right)^{\frac{1}{r}} \rho_n(z)\,\mathrm{d}z \right)\,{\rm d}t< C(N,p) \ep. 
\end{align*}
Therefore, $\lim\limits_{n \ra \infty} F_{n,i}=0$. In conclusion, we get $\lim\limits_{n \rightarrow \infty} I_{n,i,3}=0$. 

Finally, taking limit as $n \rightarrow \infty$ in \eqref{Im} and using the convergence of $I_{n,i,k}, k=1,\cdot, \cdot, \cdot,5$, we get
 \begin{align}\label{lim_Im}
     \lim\limits_{n \rightarrow \infty}I_{n,i}=0, \text{ for each } i=1,2,\cdot, \cdot, \cdot, N.
 \end{align} 
 Therefore, by \eqref{eq4}, \eqref{eq5} and \eqref{lim_Im}, we obtain
 \begin{align}\label{eq1.2}
    \lim\limits_{n \rightarrow \infty} [ u \star \rho_n-\left(u \star \rho_n\right) \zeta_n]_{s,p,a} =0.
 \end{align}
 From \eqref{eq1}, \eqref{eq1.1} and \eqref{eq1.2}, we get the required result.
\end{proof}

\begin{remark}\label{subset}
    In view of Proposition \ref{Prop3.2} and the definition of $\Dsp$, we infer that 
    $\Wsp \subset \Dsp$. More precisely, if $u \in \Wsp$, then using Proposition \ref{Prop3.2} there exists $\{u_n\} \subset \cc(\R^N)$ such that $[u-u_n]_{s,p,a} \ra 0$ as $n \ra \infty$, which implies that $u \in \Dsp$.
\end{remark}
In the following proposition, we prove that the seminorm $\left[\cdot\right]_{s,p,a}$ is an equivalent norm in $\Wsp$. 
\begin{proposition}\label{prop3.4}
   Let $N \ge 2, s \in (1,2), \sigma = s-1, p \in (1, \frac{N}{s})$, and $a \in [0, \frac{N-sp}{2})$. Then there exists $C=C(N,p, \sigma,a)>0$ such that 
    \begin{align*}
        \norm{\Gr u}_{\Lasig} \leq C \left[u\right]_{s,p,a}, \; \forall \, u \in \Wsp.  
    \end{align*}   
\end{proposition}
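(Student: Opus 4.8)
The plan is to prove the inequality first for $u\in\cc(\R^N)$ and then transfer it to all of $\Wsp$ using the density statement Proposition~\ref{Prop3.2} together with Fatou's lemma. The key observation for the smooth case is that it reduces, componentwise on $\nabla u$, to the \emph{first}-order weighted fractional Sobolev inequality at the reduced smoothness $\sigma=s-1$. Indeed, if $u\in\cc(\R^N)$ then each $\partial u/\partial x_i\in\cc(\R^N)$, and since $\sigma\in(0,1)$, $p\in(1,N/s)\subset(1,N/\sigma)$, and $a\in[0,\tfrac{N-sp}{2})\subset[0,\tfrac{N-\sigma p}{2})$ (because $\sigma<s$), the weighted Sobolev inequality \eqref{weighted fractional} applies with $\sigma$ in place of $s$ --- and \eqref{Sobolev inequality}, which is \eqref{weighted fractional} at $a=0$, covers that endpoint --- to give, for $i=1,\dots,N$,
\[
\norm{\tfrac{\partial u}{\partial x_i}}_{L_a^{p^*_{\sigma}}(\R^N)}\le C(N,p,\sigma,a)\,\Big(\iint_{\R^N\times\R^N}\frac{\big|\tfrac{\partial u}{\partial x_i}(x)-\tfrac{\partial u}{\partial x_i}(y)\big|^p}{|x-y|^{N+\sigma p}}\,\dxy\Big)^{1/p}.
\]

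Next I would re-assemble the components with Lemma~\ref{lemma_1}. By Lemma~\ref{lemma_1}-(ii), exactly as in the estimate for $[u]_{s,p,a}^p$ just after \eqref{eq2}, the right-hand side above is bounded by $C(N,p,\sigma,a)[u]_{s,p,a}$; and from $|\nabla u|\le\sum_{i=1}^N|\partial u/\partial x_i|$ together with Lemma~\ref{lemma_1}-(i) applied with exponent $p^*_{\sigma}$ inside the integral,
\[
\norm{\nabla u}_{L_a^{p^*_{\sigma}}(\R^N)}^{p^*_{\sigma}}=\int_{\R^N}\frac{|\nabla u(x)|^{p^*_{\sigma}}}{|x|^{\frac{2ap^*_{\sigma}}{p}}}\,\dx\le A(N,p^*_{\sigma})\sum_{i=1}^N\norm{\tfrac{\partial u}{\partial x_i}}_{L_a^{p^*_{\sigma}}(\R^N)}^{p^*_{\sigma}}.
\]
Combining the two displays gives $\norm{\nabla u}_{L_a^{p^*_{\sigma}}(\R^N)}\le C(N,p,\sigma,a)[u]_{s,p,a}$ for every $u\in\cc(\R^N)$.

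For arbitrary $u\in\Wsp$, fix the approximating sequence $u_n=(u\star\rho_n)\zeta_n\in\cc(\R^N)$ from Proposition~\ref{Prop3.2}, so that $[u-u_n]_{s,p,a}\to0$ and hence (by the triangle inequality for the seminorm) $[u_n]_{s,p,a}\to[u]_{s,p,a}$. Since $a\ge0$, the weight $|x|^{-2ap^*_{\sigma}/p}$ is bounded away from zero on every ball, so $\nabla u\in L_a^{p^*_{\sigma}}(\R^N)$ implies $\nabla u\in L^{p^*_{\sigma}}_{\mathrm{loc}}(\R^N)\subset L^1_{\mathrm{loc}}(\R^N)$. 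Writing $\nabla u_n=(\nabla u\star\rho_n)\zeta_n+(u\star\rho_n)\nabla\zeta_n$, the first term tends to $\nabla u$ a.e.\ (standard mollifier convergence and $\zeta_n\to1$ pointwise), while the second vanishes pointwise since $\nabla\zeta_n(x)=0$ once $n>|x|$; thus $\nabla u_n\to\nabla u$ a.e.\ in $\R^N$. Fatou's lemma then yields
\[
\norm{\nabla u}_{L_a^{p^*_{\sigma}}(\R^N)}^{p^*_{\sigma}}\le\liminf_{n\to\infty}\norm{\nabla u_n}_{L_a^{p^*_{\sigma}}(\R^N)}^{p^*_{\sigma}}\le\liminf_{n\to\infty}\big(C[u_n]_{s,p,a}\big)^{p^*_{\sigma}}=\big(C[u]_{s,p,a}\big)^{p^*_{\sigma}},
\]
with $C=C(N,p,\sigma,a)$ the constant from the smooth case, which is the assertion.

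The substantive step is the reduction in the first paragraph: recognizing that differentiating costs one order of smoothness, so the statement becomes the already-available order-$\sigma$ weighted Sobolev inequality applied to each $\partial u/\partial x_i$; the rest is bookkeeping (checking $\sigma\in(0,1)$, $p<N/\sigma$, $a<\tfrac{N-\sigma p}{2}$, and the two elementary inequalities of Lemma~\ref{lemma_1}). The one point needing care is the passage to the limit, namely identifying $\nabla u$ as the a.e.\ limit of $\nabla u_n$; as an alternative to the Fatou argument one may note that, applying the smooth-case inequality to $u_n-u_m\in\cc(\R^N)$, the sequence $\{\nabla u_n\}$ is Cauchy in $L_a^{p^*_{\sigma}}(\R^N)$, and then identify its limit with $\nabla u$ in the sense of distributions.
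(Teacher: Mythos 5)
Your proof is correct, and it reaches the conclusion by a genuinely lighter route than the paper at the limit-passage step. The smooth case is handled identically in both arguments: the paper also applies the order-$\sigma$ weighted Sobolev inequality of Caffarelli--Kohn type (\cite[Theorem 1.5]{Caffarelli_Kohn_2017}, i.e.\ \eqref{weighted fractional} at smoothness $\sigma$) componentwise to $\partial v_n/\partial x_i$ and reassembles with Lemma~\ref{lemma_1}, exactly your first paragraph, and both arguments invoke Proposition~\ref{Prop3.2} to approximate $u\in\Wsp$ by $v_n=(u\star\rho_n)\zeta_n$ in $[\cdot]_{s,p,a}$. Where you diverge is in transferring \eqref{eq2.1} to $u$: the paper proves the strong convergence $\norm{\Gr u-\Gr v_n}_{\Lasig}\to 0$ (claim \eqref{grad_un psig norm}), which costs roughly two pages of estimates ($A_n$, $B_n$, $C_n$, with dominated convergence, Jensen and Fubini arguments), and then passes to the limit on both sides of \eqref{eq2.1}; you instead observe that $\nabla v_n\to\nabla u$ a.e.\ (mollifier convergence of $\nabla u\star\rho_n$ at Lebesgue points, legitimate since $a\ge 0$ makes $\nabla u$ locally integrable, plus the pointwise vanishing of $(u\star\rho_n)\nabla\zeta_n$) and that $[v_n]_{s,p,a}\to[u]_{s,p,a}$, and conclude by Fatou's lemma. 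This is a correct and noticeably shorter argument; what it buys is economy, while what the paper's heavier route buys is the stronger fact that the smooth approximants converge to $u$ in the gradient part of the $\Wsp$-norm, not merely that the inequality survives the limit (a fact of independent interest, though not needed elsewhere in the paper). Your closing alternative --- applying the smooth-case inequality to $v_n-v_m$, getting a Cauchy sequence in $\Lasig$ and identifying the limit distributionally --- is essentially the mechanism the paper uses later in Proposition~\ref{Prop4.1}, so it too is consistent with the paper's framework and involves no circularity.
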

\begin{proof}
For $u \in \Wsp$, we consider the same sequence $v_n = \left(u \star \rho_n \right)\zeta_n$ for $n \in \N$ as in Proposition \ref{Prop3.2}. Then $v_n \in \cc(\R^N)$. Using \cite[Theorem 1.5]{Caffarelli_Kohn_2017} with $\frac{\del v_n}{\del x_i}$ and Lemma \ref{lemma_1}, we get 
\begin{align}\label{eq2.1}
    \norm{\Gr v_n}_{\Lasig} \leq C(N,p, \sigma,a) \left[v_n\right]_{s,p,a}, \; \forall \, n \in \N.
\end{align}
We claim 
\begin{align}\label{grad_un psig norm}
    \norm{\Gr u -\Gr v_n}_{\Lasig} \ra 0, \text{ as } n \ra \infty.
\end{align} 
Using the triangle inequality, we obtain
\begin{align}\label{grad_un psig norm esti}
    &\norm{\Gr u -\Gr v_n}_{\Lasig} \no\\
    &= \norm{\Gr u-\left(\Gr u \star \rho_n\right)\zeta_n - \left( u \star \rho_n\right)\Gr \zeta_n }_{\Lasig} \no\\
    & \leq \underbrace{\norm{\Gr u \star \rho_n-\left(\Gr u \star \rho_n\right)\zeta_n }_{\Lasig}}_{A_n}+ \underbrace{ \norm{\Gr u-\Gr u \star \rho_n }_{\Lasig}}_{B_n} + \underbrace{\norm{ \left( u \star \rho_n\right)\Gr \zeta_n }_{\Lasig}}_{C_n}.
\end{align}
As before, we define $\varphi_n = 1 -\zeta_n$. By noticing $0 \leq \varphi_n \leq 1$, we deduce 
\begin{align}\label{eq2.2}
    A_n^{\psig}&= \norm{\left(\Gr u \star \rho_n\right)\varphi_n}_{\Lasig}^{\psig}= \int_{\R^N} \frac{\left|\left(\Gr u \star \rho_n\right)\varphi_n\right|^{\psig}}{|x|^{\frac{2a \psig}{p}}}\dx = \int_{\R^N} \left(\sum\limits_{i=1}^{N}\left|\frac{\partial u_n}{\partial x_i}\varphi_n\right|^2 \right)^{\frac{\psig}{2}} \frac{\dx}{|x|^{\frac{2a \psig}{p}}} \no\\
    & \leq A(N,\frac{\psig}{2}) \sum\limits_{i=1}^{N}\int_{\R^N} \left|\frac{\partial u_n}{\partial x_i}\varphi_n\right|^{\psig}\frac{\dx}{|x|^{\frac{2a \psig}{p}}} \text{ (using Lemma \ref{lemma_1}-(i))}\no\\
    & \leq 2^{\psig -1} A(N,\frac{\psig}{2}) \left(\sum\limits_{i=1}^{N}\int_{\R^N} \left|\left(\frac{\partial u_n}{\partial x_i}- \frac{\partial u}{\partial x_i} \right)\varphi_n\right|^{\psig}\,\frac{\dx}{|x|^{\frac{2a \psig}{p}}} + \sum\limits_{i=1}^{N}\int_{\R^N} \left| \frac{\partial u}{\partial x_i} \varphi_n\right|^{\psig}\,\frac{\dx}{|x|^{\frac{2a \psig}{p}}} \right)\no\\
    & \leq 2^{\psig -1} A(N,\frac{\psig}{2}) \left(\sum\limits_{i=1}^{N}\int_{\R^N} \left|\frac{\partial u}{\partial x_i} - \frac{\partial u_n}{\partial x_i} \right|^{\psig}\frac{\dx}{|x|^{\frac{2a \psig}{p}}} + \sum\limits_{i=1}^{N}\int_{\R^N} \left| \frac{\partial u}{\partial x_i} \varphi_n\right|^{\psig}\,\frac{\dx}{|x|^{\frac{2a \psig}{p}}} \right).
\end{align}
Since $|\Gr u| \in \Lasig$, we get $\frac{\partial u}{\partial x_i} \in \Lasig$. Thus, applying \cite[Lemma 6.1]{Density_property} with $\frac{\del u}{\del x_i}$, we have
\begin{align}\label{eq2.3}
    \lim\limits_{n \ra \infty} \int_{\R^N} \left|\frac{\partial u}{\partial x_i} - \frac{\partial u_n}{\partial x_i} \right|^{\psig}\,\frac{\dx}{|x|^{\frac{2a \psig}{p}}} = 0, \text{ for each } i=1,2,\cdot,\cdot,\cdot,N.
\end{align}
By noticing that $\text{supp}(\zeta_n) \subset \overline{B_{2n}}$ and $\text{supp}(\varphi_n) \subset B_n^c$, the dominated convergence theorem yields
\begin{align}\label{eq2.4}
    \lim\limits_{n \ra \infty} \int_{\R^N} \left| \frac{\partial u}{\partial x_i} \varphi_n\right|^{\psig}\,\frac{\dx}{|x|^{\frac{2a \psig}{p}}} = 0, \text{ for each } i=1,2,\cdot,\cdot,\cdot,N.
\end{align}
Considering \eqref{eq2.2}, \eqref{eq2.3}, and \eqref{eq2.4}, we can deduce that
\begin{align}\label{lim_An}
    \lim\limits_{n \ra \infty} A_n =0.
\end{align}
Further applying Lemma \ref{lemma_1}-(i), we derive
\begin{align*}
    B_n^{\psig} = \norm{\Gr u- \Gr u \star \rho_n}_{\Lasig}^{\psig} & = \int_{\R^N} \left(\sum\limits_{i=1}^{N}\left|\frac{\partial u}{\partial x_i}-\frac{\partial u_n}{\partial x_i}\right|^2 \right)^{\frac{\psig}{2}}\,\frac{\dx}{|x|^{\frac{2a \psig}{p}}} \\
    & \leq A(N, \frac{\psig}{2}) \sum\limits_{i=1}^{N}\int_{\R^N} \left|\frac{\partial u}{\partial x_i}-\frac{\partial u_n}{\partial x_i}\right|^{\psig} \,\frac{\dx}{|x|^{\frac{2a \psig}{p}}}.
\end{align*}
We use \eqref{eq2.3} in the above inequality to get
\begin{align}\label{lim_Bn}
    \lim\limits_{n \ra \infty} B_n = 0.
\end{align}
Using $\norm{\Gr \zeta_n}_{\infty} \leq \frac{C}{n}$ and $\text{supp}(\abs{\Gr \zeta_n}) \subset \overline{B_{2n} \setminus B_n}$, we estimate the last term of \eqref{grad_un psig norm esti} as follows
\begin{align}\label{c_n}
    C_n^{\psig}&= \int_{\R^N} \left|\left( u \star \rho_n\right)\Gr \zeta_n \right|^{\psig}\,\frac{\dx}{|x|^{\frac{2a \psig}{p}}} \no \\
    & \leq \frac{C(N,p, \sigma)}{n^{\psig}}\int_{B_{2n} \setminus B_n} \left| u \star \rho_n \right|^{\psig}\,\frac{\dx}{|x|^{\frac{2a \psig}{p}}} \leq \frac{C(N,p, \sigma)}{n^{\psig +\frac{2a \psig}{p}}}\int_{B_{2n} \setminus B_n} \left| \int_{B_{\frac{1}{n}}}u(x-z) \rho_n(z)\,\dz \right|^{\psig}\,\dx \no \\
    & \leq \frac{C(N,p, \sigma)}{n^{\psig +\frac{2a \psig}{p}}}\int_{B_{2n} \setminus B_n} \left( \int_{B_{\frac{1}{n}}}\left|u(x-z)\right|^{\psig} \rho_n(z)\,\dz \right)\,\dx.
\end{align}
The last inequality follows using Jensen's inequality ($\rho_n \dz$ is a probability measure and $\text{supp}(\rho_n) \subset \overline{B_{\frac{1}{n}}}$).
For $x \in B_{2n} \setminus B_n$ and $z \in B_{\frac{1}{n}}$, we have
\begin{align}\label{xz inequality}
    n-\frac{1}{n}\leq\abs{x}-\abs{z}\leq \abs{x-z} \leq \abs{x} +\abs{z} \leq 2n + \frac{1}{n}.
\end{align}
Now consider $F(x,z) = \left|u(x-z)\right|^{\psig} \rho_n(z)$. 
Then for a.e. $z \in B_{\frac{1}{n}}$, we have
\begin{align*}
    \int_{B_{2n} \setminus B_n} F(x,z)\,\dx &= \rho_n(z)\int_{B_{2n} \setminus B_n}\left|u(x-z)\right|^{\psig}\,\dx \\
    & \leq c_2(n) \rho_n(z)\int_{B_{2n} \setminus B_n}\frac{\left|u(x-z)\right|^{\psig}}{\abs{x-z}^{\frac{2a \psig}{p}}}\,\dx, \text{ where $c_2(n)= \left(2n + \frac{1}{n} \right)^{\frac{2a \psig}{p}}$}\\
    & \leq c_2(n) \rho_n(z) \left(\int_{B_{2n} \setminus B_n}\frac{\left|u(x-z)\right|^{\ps}}{\abs{x-z}^{\frac{2a \ps}{p}}}\,\dx \right)^{\frac{\psig}{\ps}} \left(\int_{B_{2n} \setminus B_n}\,\dx \right)^{\frac{p}{N-\sigma p}}.
\end{align*} 
Thus we obtain
\begin{align*}
     \int_{B_{2n} \setminus B_n} F(x,z) \,\dx \leq C(N,p,\sigma)c_2(n) n^{\psig} \rho_n(z) \left(\int_{B_n^c}\frac{\left|u(x-z)\right|^{\ps}}{\abs{x-z}^{\frac{2a \ps}{p}}}\,\dx \right)^{\frac{\psig}{\ps}}.
\end{align*} 
Since $u \in \Lpastar$, for arbitrary $0<\ep<1$, there exists $n_\ep \in \N$ such that
\begin{align}\label{cgce_1}
    \left(\int_{B_n^c}\frac{\left|u(x-z)\right|^{\ps}}{\abs{x-z}^{\frac{2a \ps}{p}}}\,\dx \right)^{\frac{\psig}{\ps}} < \ep, \; \forall \, n \geq n_\ep.
\end{align}
Using \eqref{cgce_1}, for $n \geq n_\ep$ we obtain
\begin{align*}
    \int_{B_{2n} \setminus B_n} F(x,z) \,\dx \leq  C(N,p,\sigma)c_3(n) \rho_n(z) \ep,
\end{align*}
where $c_3(n) = n^{\frac{2a \psig}{p}+\psig}$. Further, we see that for every $n \geq n_\ep$,
\begin{align}\label{3.39}
   \int_{B_{\frac{1}{n}}} \left(\int_{B_{2n} \setminus B_n} F(x,z) \,\dx\right)\,\dz \leq  C(N,p,\sigma)c_3(n) \ep \int_{B_{\frac{1}{n}}}\rho_n(z)\,\dz = C(N,p,\sigma)c_3(n) \ep.
\end{align}
Therefore, Tonelli's theorem yields $F \in L^1\left(\left({B_{2n} \setminus B_n}\right) \times B_{\frac{1}{n}}\right)$. Subsequently, applying Fubini's theorem in \eqref{c_n} and then using \eqref{3.39} we get
\begin{align*}
    C_n^{\psig} \leq \frac{C(N,p, \sigma)}{n^{\psig +\frac{2a \psig}{p}}} \int_{B_{\frac{1}{n}}} \left(\int_{B_{2n} \setminus B_n}  \left|u(x-z)\right|^{\psig} \rho_n(z)\,\dx \right)\,\dz < C(N,p,\sigma) \ep , \; \forall \, n \geq n_\ep.
\end{align*}
Therefore,  
\begin{align}\label{lim_Cn}
    \lim\limits_{n \ra \infty} C_n =0.
\end{align}
From \eqref{grad_un psig norm esti}, \eqref{lim_An}, \eqref{lim_Bn} and \eqref{lim_Cn}, we obtain  \eqref{grad_un psig norm}. Moreover, we know from Proposition \ref{Prop3.2} that
\begin{align}\label{usp_cgce}
    \left[ u-v_n\right]_{s,p,a} \ra 0, \text{ as } n \ra \infty. 
\end{align}
Next, passing the limit to the inequality \eqref{eq2.1} and using \eqref{grad_un psig norm} and \eqref{usp_cgce}, we get
\begin{align*}
    \norm{\Gr u}_{\Lasig} \leq C \left[u\right]_{s,p,a}, \; \forall \, u \in \Wsp,
\end{align*}
which completes the proof.
\end{proof}

\noi \textbf{Proof of Theorem \ref{Theorem1.1}-((i) and (ii)):}
The proof of part-(i) and part-(ii) is followed from Proposition \ref{Prop3.2} and Proposition \ref{prop3.4}, respectively. \qed

\section{Characterization of fractional weighted homogeneous spaces}

Now, we proceed to characterize the function space $\Dsp$. 
In the following proposition, we show that every Cauchy sequence in $\cc(\R^N)$ converges to a function in $\Wsp$ with respect to the Gagliardo seminorm $[\cdot]_{s,p,a}.$

\begin{proposition}\label{Prop4.1}
Let $N \ge 2, s \in (1,2), \sigma = s-1, p \in (1, \frac{N}{s})$, and $a \in [0, \frac{N-sp}{2})$. Let $\{w_n\} \subset \cc(\R^N)$ be a Cauchy sequence with respect to $[\cdot]_{s,p,a}$. Then there exists $w \in \Wsp$ such that
\begin{align*}
    [w-w_n]_{s,p,a} \ra 0, \text{ as } n \ra \infty,
\end{align*}
\end{proposition}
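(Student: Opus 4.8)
The plan is to build $w$ as a limit in the weighted Lebesgue spaces, identify its distributional gradient, and only then recover convergence in the Gagliardo seminorm via an almost-everywhere identification on $\R^{2N}$. First I would transfer the Cauchy property to the Lebesgue norms: since $w_n-w_m\in\cc(\R^N)\subset\Wsp$, Proposition \ref{prop3.4} gives $\norm{\Gr w_n-\Gr w_m}_{\Lasig}\le C\,[w_n-w_m]_{s,p,a}$, so $\{\Gr w_n\}$ is Cauchy in $(\Lasig)^N$; invoking in addition a first-order weighted Sobolev inequality of Caffarelli--Kohn--Nirenberg type, $\norm{u}_{\Lpastar}\le C\norm{\Gr u}_{\Lasig}$ for $u\in\cc(\R^N)$ — which is scale-consistent because $\tfrac1{\ps}=\tfrac1{\psig}-\tfrac1N$ and the weight powers on both sides coincide — one also gets that $\{w_n\}$ is Cauchy in $\Lpastar$. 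By completeness of these weighted Lebesgue spaces there exist $V\in(\Lasig)^N$ and $w\in\Lpastar$ with $\Gr w_n\to V$ in $(\Lasig)^N$ and $w_n\to w$ in $\Lpastar$.

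Next I would check $V=\Gr w$. From $a<\tfrac{N-sp}{2}$ one verifies $\tfrac{2ap^*_\alpha}{p}<N$ for $\alpha=s,\sigma$, so the weights $\abs{x}^{-2ap^*_\alpha/p}$ are locally integrable and, by H\"older's inequality, convergence in $\Lpastar$ (resp.\ $\Lasig$) forces convergence in $L^1_{\mathrm{loc}}(\R^N)$. Hence for every $\varphi\in\cc(\R^N)$ and every $i$, integration by parts together with these local limits gives
\[
\int_{\R^N} w\,\partial_i\varphi\,\dx=\lim_{n\to\infty}\int_{\R^N} w_n\,\partial_i\varphi\,\dx=-\lim_{n\to\infty}\int_{\R^N}\partial_i w_n\,\varphi\,\dx=-\int_{\R^N} V_i\,\varphi\,\dx,
\]
so $\Gr w=V$ in $\mathcal{D}'(\R^N)$, and in particular $\norm{\Gr w}_{\Lasig}<\infty$.

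To finish I would run the seminorm convergence. Writing $\diff\mu:=\dxy$ on $\R^{2N}$ and, for $v\in\cc(\R^N)$,
\[
G_v(x,y):=\frac{\Gr v(x)-\Gr v(y)}{\abs{x-y}^{(N+\sigma p)/p}},
\]
one has $[v]_{s,p,a}=\norm{G_v}_{L^p(\R^{2N},\mu)}$, so $\{G_{w_n}\}$ is Cauchy in $L^p(\R^{2N},\mu)$ and converges there to some $G$. Since $\Gr w_n\to\Gr w$ in $\Lasig$, a subsequence has $\Gr w_{n_k}\to\Gr w$ a.e.\ in $\R^N$, hence $G_{w_{n_k}}\to G_w$ a.e.\ on $\R^{2N}$; extracting a further subsequence along which $G_{w_{n_k}}\to G$ a.e.\ identifies $G=G_w$. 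Then $[w]_{s,p,a}=\norm{G_w}_{L^p(\R^{2N},\mu)}<\infty$, and $[w-w_n]_{s,p,a}=\norm{G_w-G_{w_n}}_{L^p(\R^{2N},\mu)}\to0$ along the full sequence; combined with $w\in\Lpastar$ and $\norm{\Gr w}_{\Lasig}<\infty$ this gives $w\in\Wsp$.

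The hard part is the very first step: because $[\cdot]_{s,p,a}$ controls $u$ only through $\Gr u$ (it is a seminorm, insensitive to affine parts), the Cauchy hypothesis by itself delivers only the gradient limit $V$, and one really needs a weighted Sobolev inequality bounding $\norm{u}_{\Lpastar}$ in terms of $[u]_{s,p,a}$ (equivalently the first-order weighted CKN step) in order to produce an honest function $w$ with $\Gr w=V$. The remaining care is measure-theoretic bookkeeping — the passage from $L^p(\R^{2N},\mu)$-convergence to the pointwise identity $G=G_w$ needs two nested subsequences, and one must use the local integrability of the weights so that the gradient limit, the distributional identification, and the a.e.\ limits are all consistent on a common full-measure set.
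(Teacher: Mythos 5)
Your proposal is correct and follows essentially the same route as the paper: Proposition \ref{prop3.4} plus the first-order Caffarelli--Kohn--Nirenberg inequality to get Cauchy sequences in $\Lpastar$ and $(\Lasig)^N$, identification of the distributional gradient by integration by parts, and identification of the $L^p$-limit of the weighted difference quotients through a.e.\ convergence of the gradients. The only (harmless) cosmetic differences are that you work with the vector-valued quotient $G_v$ in $L^p(\R^{2N},\mu)$ rather than componentwise as the paper does via Lemma \ref{lemma_1}, and you obtain $[w]_{s,p,a}<\infty$ directly from the identification $G=G_w$ instead of a separate Fatou argument.
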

\begin{proof}
Let $\ep > 0$ be given. By hypothesis, there exists  $n_\epsilon \in \N$ such that
\begin{align}\label{Cauchy1}
    \left[w_n - w_m \right]_{s,p,a} < \epsilon, \; \forall \, m,n \geq n_\epsilon.
\end{align}
By \cite[Theorem 1]{Caffarelli_kohn_1984}, we have
\begin{align}\label{Caffarelli_1984}
    \norm{u}_{\Lpastar} \leq C_1(N,p,\sigma,a) \norm{\Gr u}_{\Lasig}, \; \forall \, u \in \cc(\R^N).
\end{align}
Further, by Proposition \ref{prop3.4}, 
\begin{align}\label{inequality4.3}
    \norm{\Gr u}_{\Lasig} \leq C_2(N,p,\sigma,a) [u]_{s,p,a}, \; \forall \, u \in \cc(\R^N),
\end{align}
From \eqref{Cauchy1}, \eqref{Caffarelli_1984}, \eqref{inequality4.3}, and the fact that $\{w_n\} \subset \cc(\R^N)$, we get $\{w_n\}$ is Cauchy in 
$\Lpastar$ and  $\left\{\frac{\del w_n}{\del x_i}\right\}$ is Cauchy in $\Lasig$. Consequently, $w_n \ra w$ in $\Lpastar$ and $\frac{\del w_n}{\del x_i} \ra v_i$ in $\Lasig$. Moreover, up to a subsequence, $w_n \ra w$ and $\frac{\del w_n}{\del x_i} \ra v_i$  a.e. in $\R^N$. To show $v_i = \frac{\del w}{\del x_i}$, we consider $\varphi \in \cc(\R^N)$ with $\text{supp}(\varphi) \subset K$, where $K$ is a bounded open set of class $\mathcal{C}^{1,1}$. The integration by parts formula yields
\begin{align}\label{integration by parts}
    \int_{\R^N} w_n \frac{\del \varphi}{\del x_i}\,\dx =\int_{K} w_n \frac{\del \varphi}{\del x_i}\,\dx = - \int_{K}  \frac{\del w_n}{\del x_i} \varphi\,\dx=- \int_{\R^N}  \frac{\del w_n}{\del x_i} \varphi\,\dx.
\end{align}
Using H\"{o}lder's inequality and $w_n \ra w$ in $\Lpastar$, we get
\begin{align}\label{lim1}
    &\left|  \int_{\R^N} \left( w -w_n \right) \frac{\del \varphi}{\del x_i}\,\dx\right| \leq \norm{\frac{\del \varphi}{\del x_i}}_\infty \int_{K} \left|   w -w_n \right| \,\dx \no\\
    & = \norm{\frac{\del \varphi}{\del x_i}}_\infty \int_{K} \frac{\left|   w -w_n \right|}{|x|^{\frac{2a}{p}}}  \cdot |x|^{\frac{2a}{p}} \,\dx \leq \norm{\frac{\del \varphi}{\del x_i}}_\infty \left(\int_{K} \frac{\left|   w -w_n \right|^{\ps}}{|x|^{\frac{2a \ps}{p}}}  \,\dx \right)^{\frac{1}{\ps}} \left(\int_{K}  \left(|x|^{\frac{2a}{p}}\right)^\frac{\ps}{\ps -1} \,\dx \right)^{\frac{\ps -1}{\ps}} \no\\
   & \leq C(N,p,\sigma,a,K)\norm{\frac{\del \varphi}{\del x_i}}_\infty \norm{w-w_n}_{\Lpastar} \ra 0, \text{ as } n \ra \infty.
\end{align}
Using $\frac{\del w_n}{\del x_i} \ra v_i$ in $\Lasig$ and similar calculations as in \eqref{lim1}, we also get
\begin{align}\label{lim2}
    &\left|  \int_{\R^N} \left( v_i-\frac{\del w_n}{\del x_i} \right)  \varphi\,\dx  \right| \ra 0, \text{ as } n \ra \infty.
\end{align}
Taking the limit as $n \ra \infty$ in \eqref{integration by parts}, and using \eqref{lim1} and \eqref{lim2}, 
\begin{align*}
     \int_{\R^N} w \frac{\del \varphi}{\del x_i}\,\dx = - \int_{\R^N}  v_i \varphi\,\dx, \; \forall \, \varphi \in \cc(\R^N),
\end{align*}
which implies $\frac{\del w}{\del x_i} = v_i \in \Lasig$, and hence $\abs{\Gr w} \in \Lasig$ (by Lemma \ref{lemma_1}-(i)).
For $n \geq n_\epsilon$, the triangle inequality and \eqref{Cauchy1} give 
\begin{align}\label{bddness of w_n}
    \left[w_n\right]_{s,p,a}\leq \left[w_n - w_{n_\epsilon}\right]_{s,p,a} + \left[ w_{n_\epsilon}\right]_{s,p,a}< \epsilon + \left[ w_{n_\epsilon}\right]_{s,p,a}.
\end{align}
For a.e. $(x,y) \in \R^N \times \R^N$, we define 
\begin{align}\label{V_n and V}
    V_n(x,y)= \frac{\left|\Gr w_n(x)- \Gr w_n(y)\right|^p}{\left| x-y\right|^{N+\sigma p} \abs{x}^a \abs{y}^a} \text{ and } V(x,y)= \frac{\left|\Gr w(x)- \Gr w(y)\right|^p}{\left| x-y\right|^{N+\sigma p} \abs{x}^a \abs{y}^a}.
\end{align}
Using \eqref{bddness of w_n}, $\{V_n\}$ is bounded in $L^1(\R^N \times \R^N)$. Since  $\frac{\del w_n}{\del x_i} \ra \frac{\del w}{\del x_i} $ a.e. in $\R^N$, we get $V_n \ra V$ a.e. in $\R^N \times \R^N$. Hence the Fatou's Lemma yields
\begin{align*}
  \left[w\right]_{s,p,a}^p = \iint_{\R^N \times \R^N} V(x,y)\,\dx \dy \leq  \liminf_{n \ra \infty} \iint_{\R^N \times \R^N} V_n(x,y)\,\dx \dy < \infty.
\end{align*}
Therefore, $w \in \Wsp$. Now we show that $w_n \ra w$ in $\Dsp$.
We define 
\begin{align*}
    V_{n,i}(x,y) = \frac{\frac{\del w_n}{\del x_i}(x)-\frac{\del w_n}{\del x_i}(y)}{\left|x-y \right|^{\frac{N}{p}+\sigma } \abs{x}^{\frac{a}{p}}\abs{y}^{\frac{a}{p}}},  \text{ for a.e. } (x, y) \in \R^N \times \R^N.
\end{align*}
Using Lemma \ref{lemma_1}-(ii), we have
\begin{align}\label{Cauchy2}
    &\left[w_n - w_m\right]_{s,p,a}^p = \iint_{\R^N \times \R^N} \frac{\left|\left(\Gr w_n(x)-\Gr w_n(y)\right) -\left(\Gr w_m(x)-\Gr w_m(y)\right) \right|^p}{\left|x-y \right|^{N+\sigma p}}\,\dxy \no \\
    & \geq B(N, \frac{p}{2})\left(\sum_{i=1}^{N}\iint_{\R^N \times \R^N} \frac{\left|\left(\frac{\del w_n}{\del x_i}(x)-\frac{\del w_n}{\del x_i}(y)\right) -\left(\frac{\del w_m}{\del x_i}(x)-\frac{\del w_m}{\del x_i}(y)\right) \right|^p}{\left|x-y \right|^{N+\sigma p}}\,\dxy \right) \no \\
    & = B(N, \frac{p}{2}) \sum_{i=1}^{N} \norm{V_{n,i} - V_{m,i}}_{L^p(\R^N \times \R^N)}^p.
\end{align}
From \eqref{Cauchy1} and \eqref{Cauchy2}, we deduce that for each $i$, the sequence $\{V_{n,i}\}$ is Cauchy in $L^p(\R^N \times \R^N)$. By completeness of $L^p(\R^N \times \R^N)$, there exists $V_i \in L^p(\R^N \times \R^N)$ such that $V_{n,i} \ra V_i \text{ in } L^p(\R^N \times \R^N)$. 
Also, $\frac{\del w_n}{\del x_i} \ra \frac{\del w}{\del x_i}$ a.e. in $\R^N$. Therefore, to the uniqueness of the limit, 
\begin{align*}
    V_i(x,y) = \frac{\frac{\del w}{\del x_i}(x)-\frac{\del w}{\del x_i}(y)}{\left|x-y \right|^{\frac{N}{p}+\sigma } \abs{x}^{\frac{a}{p}}\abs{y}^{\frac{a}{p}}}, \text{ for a.e. } (x,y) \in \R^N \times \R^N.
\end{align*}
Therefore,
\begin{align*}
    \left[w - w_n\right]_{s,p,a}^p &= \iint_{\R^N \times \R^N} \frac{\left|\left(\Gr w(x)-\Gr w(y)\right) -\left(\Gr w_n(x)-\Gr w_n(y)\right) \right|^p}{\left|x-y \right|^{N+\sigma p}}\,\dxy\\
     &\leq A(N,\frac{p}{2}) \left( \sum_{i=1}^{N}\iint_{\R^N \times \R^N} \frac{\left|\left(\frac{\del w}{\del x_i}(x)-\frac{\del w}{\del x_i}(y)\right) -\left(\frac{\del w_n}{\del x_i}(x)-\frac{\del w_n}{\del x_i}(y)\right) \right|^p}{\left|x-y \right|^{N+\sigma p}}\,\dxy \right) \no \\
     & = A(N,\frac{p}{2}) \sum_{i=1}^{N} \norm{V_{i} - V_{n,i}}_{L^p(\R^N \times \R^N)}^p \ra 0, \text{ as } n \ra \infty.
\end{align*}
This completes the proof.
\end{proof}
\subsection*{Completions} From Proposition \ref{prop3.4}, the seminorm $\left[\cdot\right]_{s,p,a}$ becomes a norm in the space $\cc(\R^N)$. However, $\cc(\R^N)$ is not complete with respect to this norm. This leads us to define the completion of $\cc(\R^N)$ with respect to $\left[\cdot\right]_{s,p,a}$. Let $V$ be the vector space of Cauchy sequences in $\cc(\R^N)$ and $V_0$ be the subspace of $V$ consisting of the sequences converging to zero. We define an equivalence relation on $V$ by
\begin{align*}
    \boldsymbol{u} \sim \boldsymbol{v} \; \iff \; \boldsymbol{u}-\boldsymbol{v} \in V_0,
\end{align*}
i.e., two sequences $\boldsymbol{u}=\{u_n\}$ and $\boldsymbol{v}=\{v_n\}$ in $V$ are said to be equivalent if and only if
\begin{align}\label{equivalence relation}
    \lim_{n \ra \infty}\left[u_n - v_n\right]_{s,p,a}=0.
\end{align}
The equivalence class containing a vector $\boldsymbol{u} \in V$ is called a coset, and it is denoted as $\boldsymbol{u}+V_0$ and defined by
\begin{align*}
    \boldsymbol{u}+V_0 :=\left\{\boldsymbol{u}+\boldsymbol{w} : \boldsymbol{w}\in V_0\right\}.
\end{align*}
The set of all cosets forms a quotient space and is defined as
\begin{align*}
    V/V_0:= \left\{\boldsymbol{u}+V_0: \boldsymbol{u}\in V\right\},
\end{align*}
and it becomes a vector space under the following addition and scalar multiplication:
\begin{align*}
    (\boldsymbol{u}+V_0) + (\boldsymbol{v}+V_0) = (\boldsymbol{u}+\boldsymbol{v}) +V_0  \text{ and } \alpha (\boldsymbol{u}+V_0)=\alpha \boldsymbol{u} +V_0, \; \forall \, \boldsymbol{u},\boldsymbol{v} \in V, \alpha \in \R.
\end{align*}
We define 
\begin{align}\label{quotient norm}
    \norm{\boldsymbol{u}+V_0}:= \lim_{n\ra \infty} \left[u_n\right]_{s,p,a}.
\end{align}
Since $\boldsymbol{u} \in V$, the sequence $\boldsymbol{u}= \{u_n\}$ is Cauchy with respect to $ \left[\cdot\right]_{s,p,a}$ and therefore the sequence $\{\left[u_n\right]_{s,p,a}\}$ is Cauchy in $\R$ by the following inequality
\begin{align*}
    \abs{\left[u_n\right]_{s,p,a} - \left[u_m\right]_{s,p,a}} \leq \left[u_n -u_m\right]_{s,p,a}.
\end{align*}
By completeness of $\R$, the sequence $\{\left[u_n\right]_{s,p,a}\}$ converges in $\R$ and hence the right hand quantity in \eqref{quotient norm} is well defined. Further, if $\boldsymbol{u}= \{u_n\}$ and $\boldsymbol{v}= \{v_n\}$ are Cauchy sequences such that $\{u_n-v_n\} \in V_0$, then by the following inequalities
\begin{align*}
    \left[u_n\right]_{s,p,a} \leq \left[u_n-v_n\right]_{s,p,a} + \left[v_n\right]_{s,p,a} \text{ and } \left[v_n\right]_{s,p,a} \leq \left[u_n-v_n\right]_{s,p,a} + \left[u_n\right]_{s,p,a},
\end{align*}
it follows that $\lim\limits_{n \ra \infty}\left[u_n\right]_{s,p,a} = \lim\limits_{n \ra \infty}\left[v_n\right]_{s,p,a}$. Thus, \eqref{quotient norm} is well defined, and it is independent of the choice of a representative of the coset.

\begin{remark}
   Using \cite[Section 10, Page 56]{Yosida_Functional}, we can verify that $\norm{\cdot}$ becomes a norm on $V/V_0$ and $V/V_0$ forms a Banach space with respect to $\norm{\cdot}$ and this space is unique upto isometric isomorphism. 
\end{remark}
Since $\Dsp$ is a completion of $\left(\cc(\R^N),[\cdot]_{s,p,a}\right)$, we can identify the spaces $V/V_0$ and $\Dsp$. Each element of $\Dsp$ is considered as an equivalence class $U=\langle\{u_n\}\rangle$, where the Cauchy sequence $\{u_n\}$ in $\cc(\R^N)$ is a representative of the class. Due to the isometric isomorphism between $\Dsp$ and $V/V_0$, the norm on $\Dsp$ in terms of its representative is given by 
\begin{align*}
    \norm{U}_{\Dsp}= \lim_{n \ra \infty} \left[u_n\right]_{s,p,a}.
\end{align*}

\noi \textbf{Proof of Theorem \ref{Theorem1.1}-(iii):} Using Proposition \ref{prop3.4}, it follows that $[\cdot]_{s,p,a}$ defines a norm on $\Wsp$. Now, we will show the existence of an isometric isomorphism between $\Dsp$ and $\Wsp$, thereby establishing that $\Wsp$ is a Banach space. Let $U= \langle \{u_n\} \rangle \in \Dsp$. By Proposition \ref{Prop4.1}, $u_n \ra u$ with respect to $\left[\cdot\right]_{s,p,a}$ for some $u \in \Wsp$. We define a map $\mathcal{J}: \Dsp \rightarrow \Wsp$ as 
    \begin{align*}
        \mathcal{J}(U) = u.
    \end{align*}
If $\{v_n\}$  is another representative of the equivalence class $U$, then using the equivalence relation \eqref{equivalence relation}, we have
\begin{align*}
    \lim_{n \ra \infty}\left[u-v_n\right]_{s,p,a} \leq  \lim_{n \ra \infty}\left[u -u_n\right]_{s,p,a} +  \lim_{n \ra \infty}\left[u_n -v_n\right]_{s,p,a} =0.
\end{align*}
Thus, the map $\mathcal{J}$ is well-defined, and one can verify that it is linear. Further,
the map $\mathcal{J}$ is an isometry by Proposition \ref{Prop4.1} i.e.,
\begin{align*}
    \norm{U}_{\Dsp} = \lim_{n \ra \infty}\left[u_n\right]_{s,p,a}= \left[u\right]_{s,p,a}.
\end{align*}
Now we claim that $\mathcal{J}$ is surjective. It follows from Proposition \ref{Prop3.2} that for every $w \in \Wsp$, there exists $\{w_n\} \subset \cc(\R^N)$ such that 
\begin{align*}
    \lim_{n \ra \infty}\left[w-w_n\right]_{s,p,a}=0.
\end{align*}
Since $\{w_n\}$ is Cauchy with respect to $\left[\cdot\right]_{s,p,a}$, it is a representative of some equivalence class $W$ i.e., $W= \langle \{w_n\}\rangle$. Thus, we obtain 
\begin{align*}
    \mathcal{J}(W)=w,
\end{align*}
which implies $\mathcal{J}$ is surjective. Hence $\mathcal{J}$ is a linear surjective isometry. \qed

\section{Fractional weighted Rellich inequality and finer embeddings}

First, we recall the Caffarelli-Kohn-Nirenberg inequality\cite{Caffarelli_kohn_1984}. \begin{theorem}[Caffarelli-Kohn-Nirenberg]\label{C-K-N-Theorem}
    Let $p,q,r,\alpha,\beta,\gamma$ and $l$ be real numbers such that $p,q \geq 1, r>0, 0\leq l \leq 1$, and
    \begin{align*}
        \frac{1}{p}+\frac{\alpha}{N},\frac{1}{q}+\frac{\beta}{N},\frac{1}{r}+\frac{m}{N}>0,
    \end{align*}
    where $m = l \gamma + (1-l)\beta$. Then there exists a positive constant $C$ such that 
   \begin{align*}
     \left(\int_{\R^N} |x|^{mr}|u(x)|^r\,\dx\right)^{\frac{1}{r}}   \leq C  \left(\int_{\R^N} |x|^{\alpha p}|\Gr u(x)|^p\,\dx\right)^{\frac{l}{p}} \left(\int_{\R^N} |x|^{\beta q}|u(x)|^q\,\dx\right)^{\frac{1-l}{q}}, \; \forall \, u \in \cc(\R^N),
   \end{align*} 
 if and only if the following relations hold
 \begin{align*}
     \frac{1}{r}+\frac{m}{N}= l \left(\frac{1}{p}+\frac{\alpha-1}{N} \right)+ (1-l) \left( \frac{1}{q}+\frac{\beta}{N}\right),
 \end{align*}
 with
 \begin{align*}
     0 \leq \alpha - \gamma  \text{ if } l >0,
 \end{align*}
 and 
 \begin{align*}
     \alpha -\gamma \le 1 \text{ if } l>0 \text{ and }\frac{1}{r}+\frac{m}{N}= \frac{1}{p}+\frac{\alpha-1}{N}.
 \end{align*}
\end{theorem}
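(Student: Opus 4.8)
The plan is to deduce the general inequality (for $0<l\le 1$) from its endpoint case $l=1$ by a single H\"older interpolation. The degenerate case $l=0$ is admissible only when $q=r$ and $m=\beta$, where the assertion is an equality; and the \emph{necessity} of the structural hypotheses is a separate, purely scaling matter: testing against $u(\la\,\cdot)$ and letting $\la\to 0$ and $\la\to\infty$ forces the dimensional balance relation, while testing against bumps concentrated in a thin annulus $\{\abs{x}\sim\la\}$, with $\la\to 0$ or $\la\to\infty$, forces the sign constraints $\alpha-\gamma\ge 0$ and (in the degenerate exponent case) $\alpha-\gamma\le 1$. Thus the core is the case $l=1$, where $m=\gamma$, the balance relation reads $\tfrac1r+\tfrac{\gamma}{N}=\tfrac1p+\tfrac{\alpha-1}{N}$, and the admissible range is $0\le\alpha-\gamma\le 1$.

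\emph{Step 1 (the case $l=1$).} For $u\in\cc(\R^N)$ I would start from the classical pointwise bound $\abs{u(x)}\le c_N\int_{\R^N}\abs{x-y}^{1-N}\abs{\Gr u(y)}\,\dy=:c_N\,(I_1\abs{\Gr u})(x)$, $I_1$ being the Riesz potential of order $1$. When $r>p$ (equivalently $\alpha-\gamma<1$) and the weights lie in the admissible range for power-weighted fractional integration, the Stein--Weiss inequality gives
\[
\Big(\int_{\R^N}\abs{x}^{\gamma r}\abs{(I_1 g)(x)}^{r}\,\dx\Big)^{1/r}\le C\Big(\int_{\R^N}\abs{x}^{\alpha p}\abs{g(x)}^{p}\,\dx\Big)^{1/p},
\]
and taking $g=\abs{\Gr u}$ yields the $l=1$ inequality; the scaling and positivity hypotheses of Stein--Weiss are exactly $\tfrac1p+\tfrac\alpha N>0$, $\tfrac1r+\tfrac{\gamma}{N}>0$, $\alpha-\gamma\ge 0$, together with the balance relation. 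The boundary value $\alpha-\gamma=1$ forces $r=p$, and there the claim $\||x|^{\gamma}u\|_{p}\le C\||x|^{\gamma+1}\Gr u\|_{p}$ is just the weighted Hardy inequality, valid on the full admissible range of $\gamma$ — which is precisely why the statement imposes $\alpha-\gamma\le 1$ only in that degenerate sub-case. (An alternative to Stein--Weiss: split $\R^N$ into dyadic annuli, on each of which the power weights are comparable to constants, apply the unweighted Sobolev--Poincar\'e inequality annulus by annulus, and sum via a discrete Hardy inequality, the series converging exactly by the balance relation.)

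\emph{Step 2 (interpolation).} Given $l\in(0,1)$, choose $r_1$ by $\tfrac1{r_1}+\tfrac{\gamma}{N}=\tfrac1p+\tfrac{\alpha-1}{N}$, so Step~1 gives $\||x|^{\gamma}u\|_{r_1}\le C\||x|^{\alpha}\Gr u\|_{p}$. The factorization
\[
\abs{x}^{mr}\abs{u}^{r}=\bigl(\abs{x}^{\gamma r_1}\abs{u}^{r_1}\bigr)^{lr/r_1}\bigl(\abs{x}^{\beta q}\abs{u}^{q}\bigr)^{(1-l)r/q}
\]
is a genuine identity because $m=l\gamma+(1-l)\beta$, and H\"older with exponents $\tfrac{r_1}{lr}$ and $\tfrac{q}{(1-l)r}$ — conjugate precisely because the theorem's balance relation rearranges to $\tfrac1r=\tfrac{l}{r_1}+\tfrac{1-l}{q}$, and automatically $\ge 1$ — gives $\||x|^{m}u\|_{r}\le\||x|^{\gamma}u\|_{r_1}^{l}\||x|^{\beta}u\|_{q}^{1-l}$, which together with Step~1 completes the argument. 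The main obstacle is the endpoint bookkeeping. One must verify $r_1\in(1,\infty)$ for Stein--Weiss to apply; and in the genuinely degenerate ranges — $p=1$, or $r$ or $r_1$ at most $1$, or strong weights with $\alpha$ beyond $\tfrac{N(p-1)}{p}$, or the range $\alpha-\gamma>1$ (admissible when $l\in(0,1)$ and $\tfrac1r+\tfrac mN\ne\tfrac1p+\tfrac{\alpha-1}{N}$) — the Riesz-potential route fails, and one must instead run the original Caffarelli--Kohn--Nirenberg scheme: absorb part of the gradient weight into the unknown through $\Gr(\abs{x}^{c}u)=c\,\abs{x}^{c-2}x\,u+\abs{x}^{c}\Gr u$, optimize over $c$ to trade a gradient weight against a zeroth-order weight that is controlled by the lower-order factor, and close with one-dimensional differential inequalities integrated in the radial direction. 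That case analysis, together with the sharpness of $0\le\alpha-\gamma\le 1$, is the real work — which is why, for the purposes of this paper, it suffices to cite \cite{Caffarelli_kohn_1984}.
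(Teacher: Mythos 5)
The paper does not prove this theorem at all: it is stated as a recalled external result, with the proof deferred entirely to the original 1984 paper of Caffarelli, Kohn and Nirenberg \cite{Caffarelli_kohn_1984}. So there is no in-paper argument to compare yours against, and the honest benchmark is whether your sketch would stand on its own as a proof. It would not, and you essentially say so yourself: the reduction you carry out (necessity by scaling and by annular test functions; sufficiency via the $l=1$ endpoint through Stein--Weiss for $I_1$, then H\"older interpolation using $m=l\gamma+(1-l)\beta$ and the balance relation $\tfrac1r=\tfrac{l}{r_1}+\tfrac{1-l}{q}$) is the standard skeleton, and the interpolation step is correct as written. But the load-bearing cases are exactly the ones you set aside: $p=1$ (where Stein--Weiss for the Riesz potential fails and one needs the weak-type/co-area route), the range $\alpha-\gamma>1$ reachable when $l\in(0,1)$ away from the degenerate exponent identity (where the intermediate exponent $r_1$ dictated by the $l=1$ balance relation need not satisfy the $l=1$ hypotheses, so the reduction to the endpoint with the same $\alpha,\gamma$ breaks down), and the sharpness of the constraints $0\le\alpha-\gamma\le1$. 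Deferring these to the radial/commutator scheme of the original paper is a citation, not a proof.

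That said, your treatment is consistent with, and in fact more informative than, what the paper does. Two small cautions if you ever flesh this out: first, the statement as reproduced in the paper allows $r>0$ (not just $r\ge1$), so the H\"older exponents $\tfrac{r_1}{lr}$ and $\tfrac{q}{(1-l)r}$ in Step 2 must be checked to be $\ge1$ individually — your observation that their reciprocals sum to $1$ and are nonnegative does settle this, but only because $lr/r_1,(1-l)r/q\ge0$; second, for the application actually made in Section 5 of the paper one only needs the special case $l=1$, $r=p$, $\alpha-\gamma=1$ (the choice \eqref{choice}), which is precisely the weighted Hardy inequality you identify as the boundary case, so for the paper's purposes your Step 1 alone — in its easiest sub-case — already suffices.
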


For $s \in (1,2), \sigma = s-1, p\in (1,\frac{N}{s})$ and $a \in [0,\frac{N-sp}{2})$, we consider
 \begin{align}\label{choice}
     r=p, l=1, \alpha = -\frac{\sigma p+2a}{p}, \text{ and } \gamma=-\frac{sp + 2a}{p}.
 \end{align}
 Then, we notice that $m = \gamma=\alpha-1$ and 
$\frac{1}{r}+\frac{m}{N} =\frac{1}{p}-\frac{s p+2a}{Np} =     \frac{1}{p}+\frac{\alpha-1}{N}$. Moreover,
\begin{align*}
    \frac{1}{p}+\frac{\alpha}{N}= \frac{N-\sigma p-2a}{Np}>0 \text{ and } \frac{1}{r}+\frac{m}{N}=\frac{N-sp-2a}{Np}>0.
\end{align*}
Hence the assumptions of Theorem \ref{C-K-N-Theorem} are verified, and we get 
\begin{align}\label{C-K-N_1}
    C(N,p,\sigma,a) \int_{\R^N}\frac{\abs{u(x)}^p}{\abs{x}^{s p +2a}} \, \dx \le \int_{\R^N}\frac{\abs{\Gr u(x)}^p}{\abs{x}^{\sigma p +2a}} \, \dx.
\end{align}
Moreover, by \cite[Theorem 2.7]{Caffarelli_Kohn_2017},
\begin{align}\label{weighted Hardy}
 C(N,p,\sigma,a) \int_{\R^N}\frac{\abs{u(x)}^p}{\abs{x}^{\sigma p +2a}}\,\dx \le \iint_{\R^N \times \R^N} \frac{\abs{u(x)-u(y)}^p}{\abs{x-y}^{N + \sigma p}}\, \dxy, \; \forall \, u \in \cc(\R^N). 
\end{align}
Now observe that
\begin{align*}
    &\iint_{\R^N \times \R^N} \frac{\abs{\Gr u(x)-\Gr u(y)}^p}{\abs{x-y}^{N + \sigma p}}\, \dxy \\
    &\geq B(N,\frac{p}{2})\left(\sum_{i=1}^{N} \iint_{\R^N \times \R^N} \frac{\abs{\frac{\del u}{\del x_i}(x)-\frac{\del u}{\del x_i}(y)}^p}{\abs{x-y}^{N + \sigma p}}\, \dxy \right) \text{ (using Lemma \ref{lemma_1}-(ii))}\\
    & \geq B(N,\frac{p}{2})C \left(\sum_{i=1}^{N}\int_{\R^N}\frac{\abs{\frac{\del u}{\del x_i}(x)}^p}{\abs{x}^{\sigma p +2a}}\,\dx \right) \text{ (using \eqref{weighted Hardy} with $\frac{\del u}{\del x_i}$)}\\
    & \geq \frac{B(N,\frac{p}{2})}{A(N,\frac{p}{2})} C\int_{\R^N}\frac{\abs{\Gr u(x)}^p}{\abs{x}^{\sigma p +2a}}\,\dx \text{ (using Lemma \ref{lemma_1}-(i))}\\
    & \geq \frac{B(N,\frac{p}{2})}{A(N,\frac{p}{2})} C \int_{\R^N}\frac{\abs{u(x)}^p}{\abs{x}^{s p +2a}} \, \dx,   \text{ (using \eqref{C-K-N_1})}.
\end{align*}
Therefore, we have 
\begin{proposition}\label{weighted higher}
Let $N \ge 2, s \in (1,2), \sigma = s-1, p \in (1, \frac{N}{s})$, and $a \in [0, \frac{N-sp}{2})$. Then 
\begin{align}\label{weighted Hardy higher}
    \iint_{\R^N \times \R^N} \frac{\abs{\Gr u(x)-\Gr u(y)}^p}{\abs{x-y}^{N + \sigma p}}\, \dxy \geq C\int_{\R^N}\frac{\abs{u(x)}^p}{\abs{x}^{s p +2a}} \, \dx, \;\forall \, u \in \cc(\R^N),
\end{align}
where $C=C(N,p,\sigma,a)$. 
\end{proposition}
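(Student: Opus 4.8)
The plan is to obtain \eqref{weighted Hardy higher} by chaining together three ingredients that are all at our disposal: the elementary vector-norm comparison of Lemma~\ref{lemma_1}, the first-order fractional weighted Hardy inequality \eqref{weighted Hardy} applied to each partial derivative $\frac{\del u}{\del x_i}$ separately, and the Caffarelli--Kohn--Nirenberg inequality of Theorem~\ref{C-K-N-Theorem} with a judicious choice of exponents. The strategy descends in two stages: first from the Gagliardo-type seminorm $\iint |\Gr u(x)-\Gr u(y)|^p|x-y|^{-(N+\sigma p)}\dxy$ down to the weighted norm $\int |\Gr u(x)|^p |x|^{-(\sigma p+2a)}\dx$, and then from that quantity down to $\int |u(x)|^p|x|^{-(sp+2a)}\dx$.

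Concretely, I would first fix $x,y\in\R^N$, write $|\Gr u(x)-\Gr u(y)|^p=\big(\sum_{i=1}^N|\frac{\del u}{\del x_i}(x)-\frac{\del u}{\del x_i}(y)|^2\big)^{p/2}$, and apply Lemma~\ref{lemma_1}-(ii) with $q=p/2$ to bound this from below by $B(N,p/2)\sum_{i=1}^N|\frac{\del u}{\del x_i}(x)-\frac{\del u}{\del x_i}(y)|^p$. Integrating against $|x-y|^{-(N+\sigma p)}\dxy$ and using that $\frac{\del u}{\del x_i}\in\cc(\R^N)$, I would then apply \eqref{weighted Hardy} to each term (this is legitimate since $\sigma=s-1\in(0,1)$ and $a\in[0,\frac{N-sp}{2})\subset[0,\frac{N-\sigma p}{2})$) to get a lower bound by $C\sum_{i=1}^N\int|\frac{\del u}{\del x_i}(x)|^p|x|^{-(\sigma p+2a)}\dx$. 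Next, Lemma~\ref{lemma_1}-(i) with $q=p/2$ gives $\sum_i|\frac{\del u}{\del x_i}(x)|^p\ge A(N,p/2)^{-1}|\Gr u(x)|^p$ pointwise, so the running lower bound becomes a constant multiple of $\int|\Gr u(x)|^p|x|^{-(\sigma p+2a)}\dx$. Finally I would invoke Theorem~\ref{C-K-N-Theorem} with the choice $r=p$, $l=1$, $\alpha=-\frac{\sigma p+2a}{p}$, $\gamma=-\frac{sp+2a}{p}$ as in \eqref{choice}, which yields \eqref{C-K-N_1}, namely $C(N,p,\sigma,a)\int|u(x)|^p|x|^{-(sp+2a)}\dx\le\int|\Gr u(x)|^p|x|^{-(\sigma p+2a)}\dx$; concatenating the four estimates produces the desired inequality with a constant $C=C(N,p,\sigma,a)$.

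The one place that genuinely requires care is the admissibility bookkeeping for the Caffarelli--Kohn--Nirenberg step. With the above parameters one computes $m=\gamma=\alpha-1$, so the borderline identity $\frac1r+\frac mN=\frac1p+\frac{\alpha-1}{N}$ holds; I must then check that the accompanying constraint $\alpha-\gamma\le1$ (which the theorem requires precisely in this borderline regime) together with $0\le\alpha-\gamma$ and the balance relation $\frac1r+\frac mN=l(\frac1p+\frac{\alpha-1}{N})+(1-l)(\frac1q+\frac\beta N)$ are all satisfied, and that $\frac1p+\frac\alpha N=\frac{N-\sigma p-2a}{Np}>0$ and $\frac1r+\frac mN=\frac{N-sp-2a}{Np}>0$; here $\alpha-\gamma=1$ exactly, so it is truly the endpoint case and deserves a line of verification. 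Everything else is a routine assembly of previously established inequalities, so I do not anticipate any substantive obstacle beyond this check.
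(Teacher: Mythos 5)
Your proposal is correct and follows essentially the same route as the paper: Lemma \ref{lemma_1} to pass between $\abs{\Gr u}$ and the individual partials, the weighted Hardy inequality \eqref{weighted Hardy} applied to each $\frac{\del u}{\del x_i}$, and the Caffarelli--Kohn--Nirenberg inequality with exactly the parameter choice \eqref{choice} (including the endpoint verification $\alpha-\gamma=1$). The only difference is cosmetic ordering of the chain, so there is nothing further to add.
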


Next, using \eqref{weighted Hardy higher}, we study the finer embeddings of $\Dsp$. Let $\Omega$ be an open set in $\R^N$. For $f \in M(\Omega)$ we define the one dimensional decreasing rearrangement $f^*$ of $f$ as $f^*(\tau) = \inf \{t>0 : \mu_{f}(t) < \tau\}$ for $\tau > 0$. Then, the following identity holds (see \cite{EdEv}) 
\begin{align}\label{identity-1}
    \int_{\Omega} f(x) \, \dx = \int_{0}^{\abs{\Omega}} f^*(t) \, \dt. 
\end{align}

\noi \textbf{Proof of Theorem \ref{Theorem1.1}-(iv):} 
For $u \in \cc(\R^N)$, consider the set $A_t = \{ x \in \R^N : \abs{u(x)}^p > t\}$ and we write 
    \begin{align*}
        \abs{u(x)}^p = \int_{0}^{\abs{u(x)}^p} \dt = \int_{0}^{\infty} \chi_{t}(x) \, \dt, \text{ where } \chi_{t}(x) = 1, \text{ if } x \in A_t, \text{ and } \chi_{t}(x) = 0, \text{ if } x \in A_t^c.
    \end{align*}
Using Fubini's theorem, we write
\begin{align*}
     \int_{\R^N}\frac{\abs{u(x)}^p}{\abs{x}^{sp +2a}} \, \dx =\int_{\R^N} \left( \int_{0}^{\infty}   \chi_{t}(x) \, \dt \right) \frac{\dx}{\abs{x}^{sp +2a}} =  \int_0^{\infty} \left( \int_{\R^N} \frac{\chi_{t}(x)}{\abs{x}^{sp +2a}} \, \dx \right) \,\dt.
\end{align*}
Denote $f(x) = \abs{x}^{-(sp +2a)}$ for $x \in \R^N$. Using \eqref{identity-1},  
\begin{align*}
    \int_{\R^N} f(x) \chi_{t}(x) \, \dx = \int_{A_t} f(x) \, \dx = \int_{0}^{|A_t|} f^*(t) \, \dt. 
\end{align*}
By \cite[Example 2.1.2]{biswas2021}, we see that $f^*(t) = \left( \frac{\omega_N}{t}\right)^{\frac{sp+2a}{N}}$, were $\omega_N$ is the measure of a unit ball in $\R^N$. This implies
\begin{align*}
    \int_{\R^N} f(x) \chi_{t}(x) \, \dx = \omega_N^{\frac{sp+2a}{N}} \int_{0}^{|A_t|} \frac{\dt}{t^{\frac{sp+2a}{N}}} = \frac{N \omega_N^{\frac{sp+2a}{N}}}{N-sp-2a} \abs{A_t}^{\frac{N-sp-2a}{N}}. 
\end{align*}
Observe that $\abs{A_t} = \mu_{\abs{u}^p}(t)$ for $t>0$. Therefore, 
\begin{align*}
    \int_{\R^N}\frac{\abs{u(x)}^p}{\abs{x}^{sp +2a}} \, \dx = \frac{N \omega_N^{\frac{sp+2a}{N}}}{N-sp-2a} \int_{0}^{\infty} \mu_{\abs{u}^p}(t)^{\frac{p}{p^{\star}}} \, \dt = \omega_N^{\frac{sp+2a}{N}} |\abs{u}^p|_{\frac{p^{\star}}{p}, 1} = \omega_N^{\frac{sp+2a}{N}} |u|^p_{p^{\star}, p}.
\end{align*}
Hence using \eqref{weighted Hardy higher} we obtain 
\begin{align}\label{embedding}
    |u|_{p^{\star}, p} \le C(N,s,p, a) [u]_{s,p,a}, \; \forall \, u \in \cc(\R^N).
\end{align}
Thus, \eqref{embedding} with the definition of $\Dsp$ yields \eqref{finer embedding}.  \qed 

In the following proposition, using a similar layer cake decomposition technique, we show that $L^{p^\star,p}(\R^N)$ is strictly contained in $\Lpastar$. 

\begin{proposition}\label{finer}
   Let $N \ge 2, s \in (1,2), \sigma = s-1, p \in (1, \frac{N}{s})$, and $a \in [0, \frac{N-sp}{2})$. Then $L^{p^\star,p}(\R^N) \subsetneq \Lpastar$, where $p^\star = \frac{Np}{N-sp-2a}.$
\end{proposition}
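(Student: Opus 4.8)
The plan is to sandwich the two spaces between $L^{p^\star,p}(\R^N)$ and $L^{p^\star,p^*_s}(\R^N)$, using a layer-cake computation in the spirit of the proof of Theorem \ref{Theorem1.1}-(iv) together with the strict nesting of Lorentz spaces sharing a common first index. Concretely, I would first prove the (non-strict) inclusion $L^{p^\star,p^*_s}(\R^N) \subseteq \Lpastar$. Since $p^*_s = \frac{Np}{N-sp} > p$ for every $p \in (1,\frac{N}{s})$, \cite[Proposition 3.4.3]{EdEv}, as recalled in Section 2, gives the strict inclusion $L^{p^\star,p}(\R^N) \subsetneq L^{p^\star,p^*_s}(\R^N)$; chaining the two then yields $L^{p^\star,p}(\R^N) \subsetneq \Lpastar$.

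For the inclusion $L^{p^\star,p^*_s}(\R^N) \subseteq \Lpastar$, set $\beta := \frac{2ap^*_s}{p} = \frac{2aN}{N-sp}$ and observe that $a < \frac{N-sp}{2}$ forces $\beta < N$; hence $|x|^{-\beta}$ is locally integrable at the origin and, by \cite[Example 2.1.2]{biswas2021}, its decreasing rearrangement is $(|x|^{-\beta})^*(\tau) = (\omega_N/\tau)^{\beta/N}$. Given $u \in L^{p^\star,p^*_s}(\R^N)$, finiteness of $|u|_{p^\star,p^*_s}$ forces $\mu_u(\lambda)<\infty$ for every $\lambda>0$ (otherwise, by monotonicity of $\mu_u$, the quasinorm integral would diverge on a neighbourhood of $0$), so the superlevel sets $A_t := \{x : |u(x)|^{p^*_s} > t\} = \{|u| > t^{1/p^*_s}\}$ have finite measure. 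Writing $|u(x)|^{p^*_s} = \int_0^\infty \chi_{A_t}(x)\,\dt$ and applying Tonelli's theorem,
\begin{align*}
    \int_{\R^N} \frac{|u(x)|^{p^*_s}}{|x|^\beta}\,\dx = \int_0^\infty \left( \int_{A_t} \frac{\dx}{|x|^\beta} \right) \dt,
\end{align*}
and for each $t$ the identity \eqref{identity-1} applied on $A_t$, together with the fact that the rearrangement of $|x|^{-\beta}$ restricted to $A_t$ is pointwise dominated by $(|x|^{-\beta})^*$ (equivalently, the Hardy--Littlewood inequality), gives $\int_{A_t}|x|^{-\beta}\,\dx \le \frac{N\omega_N^{\beta/N}}{N-\beta}\,\abs{A_t}^{\frac{N-\beta}{N}}$. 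Using $\abs{A_t} = \mu_u(t^{1/p^*_s})$, the substitution $\lambda = t^{1/p^*_s}$, and the exponent identity $\frac{N-\beta}{N} = \frac{N-sp-2a}{N-sp} = \frac{p^*_s}{p^\star}$, the right-hand side turns into a constant multiple of $p^\star\int_0^\infty \lambda^{p^*_s - 1}\mu_u(\lambda)^{p^*_s/p^\star}\, {\rm d}\lambda = |u|_{p^\star,p^*_s}^{p^*_s}$. Hence $\norm{u}_{\Lpastar} \le C(N,s,p,a)\,|u|_{p^\star,p^*_s} < \infty$, i.e. $u \in \Lpastar$.

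I do not expect a genuine obstacle: the argument is a direct transcription of the layer-cake computation already used for Theorem \ref{Theorem1.1}-(iv). The only points requiring care are (i) checking $\beta < N$, which is exactly the hypothesis $a < \frac{N-sp}{2}$ and guarantees that $(|x|^{-\beta})^*$ is integrable near $0$; (ii) invoking the rearrangement inequality in the correct direction --- here one only needs the upper bound $\int_{A_t}|x|^{-\beta} \le \int_0^{\abs{A_t}}(|x|^{-\beta})^*$, which is the direction supplied by the Hardy--Littlewood inequality, so no reverse rearrangement estimate (which would in fact be false) is needed; and (iii) the bookkeeping identity $\frac{N-\beta}{N} = \frac{p^*_s}{p^\star}$, which is precisely what converts the final integral into a genuine Lorentz quasinorm. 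Once $L^{p^\star,p^*_s}(\R^N) \subseteq \Lpastar$ is in hand, the strictness of $L^{p^\star,p}(\R^N) \subsetneq \Lpastar$ costs nothing further, being inherited from the elementary strict nesting $L^{p^\star,p}(\R^N) \subsetneq L^{p^\star,p^*_s}(\R^N)$, valid since $p < p^*_s$.
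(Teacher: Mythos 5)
Your proof is correct and follows essentially the same route as the paper: a layer-cake computation relating $\Lpastar$ to $L^{p^\star,p^*_s}(\R^N)$, followed by the strict nesting $L^{p^\star,p}(\R^N)\subsetneq L^{p^\star,p^*_s}(\R^N)$ from \cite[Proposition 3.4.3]{EdEv}. The only difference is one of care rather than of method: the paper asserts the exact identity $\Lpastar = L^{p^\star,p^*_s}(\R^N)$ by treating $\int_{A_t}|x|^{-\beta}\,\dx=\int_0^{|A_t|}\bigl(|x|^{-\beta}\bigr)^*(\tau)\,{\rm d}\tau$ as an equality (which in general is only the Hardy--Littlewood upper bound, since $A_t$ need not be a ball centred at the origin), whereas you correctly use only the one-sided inclusion $L^{p^\star,p^*_s}(\R^N)\subseteq\Lpastar$, which, as you observe, is all that the strict inclusion requires.
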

\begin{proof}
   For $t>0$, we consider the set $A_t = \{ x \in \R^N : \abs{u(x)}^{\ps} > t\}$, and write 
    \begin{align*}
        \abs{u(x)}^{\ps}= \int_{0}^{\abs{u(x)}^{\ps}} \dt = \int_{0}^{\infty} \chi_{t}(x) \, \dt, \text{ where } \chi_{t}(x) = 1, \text{ if } x \in A_t, \text{ and } \chi_{t}(x) = 0, \text{ if } x \in A_t^c.
    \end{align*}
Using Fubini's theorem, 
\begin{align*}
     \int_{\R^N}\frac{\abs{u(x)}^{\ps}}{\abs{x}^{\frac{2a \ps}{p}}} \, \dx =\int_{\R^N} \left( \int_{0}^{\infty}   \chi_{t}(x) \, \dt \right) \frac{\dx}{\abs{x}^{\frac{2a \ps}{p}}} =  \int_0^{\infty} \left( \int_{\R^N} \frac{\chi_{t}(x)}{\abs{x}^{\frac{2a \ps}{p}}} \, \dx \right) \,\dt.
\end{align*}
Denote $f(x) = \abs{x}^{-\frac{2a \ps}{p}}$ for $x \in \R^N$. Using \eqref{identity-1},  
\begin{align*}
    \int_{\R^N} f(x) \chi_{t}(x) \, \dx = \int_{A_t} f(x) \, \dx = \int_{0}^{|A_t|} f^*(t) \, \dt. 
\end{align*}
By \cite[Example 2.1.2]{biswas2021}, $f^*(t) = \left( \frac{\omega_N}{t}\right)^{\frac{2a}{N-sp}}$ and hence
\begin{align*}
    \int_{\R^N} f(x) \chi_{t}(x) \, \dx = \omega_N^{\frac{2a}{N-sp}} \int_{0}^{|A_t|} \frac{\dt}{t^{\frac{2a}{N-sp}}} = \frac{ \omega_N^{\frac{2a}{N-sp}} (N-sp)}{N-sp-2a} \abs{A_t}^{\frac{N-sp-2a}{N-sp}}, 
\end{align*}
where $\abs{A_t} = \mu_{\abs{u}^{\ps}}(t)$ for $t>0$. Therefore, 
\begin{align*}
    \int_{\R^N}\frac{\abs{u(x)}^{\ps}}{\abs{x}^{\frac{2a \ps}{p}}} \, \dx = \frac{ \omega_N^{\frac{2a}{N-sp}}(N-sp)}{N-sp-2a} \int_{0}^{\infty} \mu_{\abs{u}^{\ps}}(t)^{\frac{\ps}{p^{\star}}} \, \dt = \omega_N^{\frac{2a}{N-sp}} |\abs{u}^{\ps}|_{\frac{p^{\star}}{\ps}, 1} = \omega_N^{\frac{2a}{N-sp}} |u|^{\ps}_{p^{\star}, \ps}.
\end{align*}
We know that $L^{q,r}(\R^N) \subsetneq L^{q,s}(\R^N)$ for $r<s$. Taking $q=p^\star, s=\ps$ and $r=p$ yields 
\begin{align}\label{inclusion}
    L^{p^\star,p}(\R^N) \subsetneq L^{p^\star,\ps}(\R^N) = \Lpastar.
\end{align}
Hence the strict inclusion holds.
\end{proof}

\begin{remark}
    From Remark \ref{subset} and Theorem \ref{Theorem1.1}-(iv), we have 
    \begin{align*}
        \mathring{W}^{s,p}_a(\R^N) \subset L^{p^\star,p}(\R^N), \text{ for } s \in (1,2) \text{ and } p \in (1, \frac{N}{s}).
    \end{align*}
\end{remark}

\noi \textbf{Acknowledgments.} 
The first author acknowledges the Science and Engineering Research Board, Government of India, for national postdoctoral fellowship. The second author acknowledges the support of the CSIR fellowship, file no. 09/1125(0016)/2020--EMR--I.

\bibliographystyle{abbrv}

\begin{thebibliography}{10}

\bibitem{Caffarelli_Kohn_2017}
B.~Abdellaoui and R.~Bentifour.
\newblock Caffarelli-{K}ohn-{N}irenberg type inequalities of fractional order with applications.
\newblock {\em J. Funct. Anal.}, 272(10):3998--4029, 2017.

\bibitem{AN2019}
T.~V. Anoop and N.~Biswas.
\newblock Neumann eigenvalue problems on the exterior domains.
\newblock {\em Nonlinear Anal.}, 187:339--351, 2019.

\bibitem{ANU2021}
T.~V. Anoop, N.~Biswas, and U.~Das.
\newblock Admissible function spaces for weighted {S}obolev inequalities.
\newblock {\em Commun. Pure Appl. Anal.}, 20(9):3259--3297, 2021.

\bibitem{biswas2021}
N.~Biswas.
\newblock {\em The (p,q)-Hardy Inequalities and application to nonlinear eigenvalue problems}.
\newblock PhD thesis, Indian Institute of Technology Madras, Chennai, 2021.

\bibitem{Brasco_Characterisation}
L.~Brasco, D.~G\'{o}mez-Castro, and J.~L. V\'{a}zquez.
\newblock Characterisation of homogeneous fractional {S}obolev spaces.
\newblock {\em Calc. Var. Partial Differential Equations}, 60(2):Paper No. 60, 40, 2021.

\bibitem{Brezis_book}
H.~Br\'{e}zis.
\newblock {\em Functional analysis, {S}obolev spaces and partial differential equations}.
\newblock Universitext. Springer, New York, 2011.

\bibitem{Caffarelli_kohn_1984}
L.~Caffarelli, R.~Kohn, and L.~Nirenberg.
\newblock First order interpolation inequalities with weights.
\newblock {\em Compositio Math.}, 53(3):259--275, 1984.

\bibitem{Deny_Lions_1954}
J.~Deny and J.~L. Lions.
\newblock Les espaces du type de {B}eppo {L}evi.
\newblock {\em Ann. Inst. Fourier (Grenoble)}, 21:305--370, 1945.

\bibitem{Dipierro-Montoro-2015}
S.~Dipierro, L.~Montoro, I.~Peral, and B.~Sciunzi.
\newblock Qualitative properties of positive solutions to nonlocal critical problems involving the {H}ardy-{L}eray potential.
\newblock {\em Calc. Var. Partial Differential Equations}, 55(4):Art. 99, 29, 2016.

\bibitem{Density_property}
S.~Dipierro and E.~Valdinoci.
\newblock A density property for fractional weighted {S}obolev spaces.
\newblock {\em Atti Accad. Naz. Lincei Rend. Lincei Mat. Appl.}, 26(4):397--422, 2015.

\bibitem{EdEv}
D.~E. Edmunds and W.~D. Evans.
\newblock {\em Hardy operators, function spaces and embeddings}.
\newblock Springer Monographs in Mathematics. Springer-Verlag, Berlin, 2004.

\bibitem{Frank_Lieb_2008}
R.~L. Frank, E.~H. Lieb, and R.~Seiringer.
\newblock Hardy-{L}ieb-{T}hirring inequalities for fractional {S}chr\"{o}dinger operators.
\newblock {\em J. Amer. Math. Soc.}, 21(4):925--950, 2008.

\bibitem{Frank-JFA-2008}
R.~L. Frank and R.~Seiringer.
\newblock Non-linear ground state representations and sharp {H}ardy inequalities.
\newblock {\em J. Funct. Anal.}, 255(12):3407--3430, 2008.

\bibitem{Giovanni_book}
G.~Leoni.
\newblock {\em A first course in fractional {S}obolev spaces}, volume 229 of {\em Graduate Studies in Mathematics}.
\newblock American Mathematical Society, Providence, RI, [2023] \copyright 2023.

\bibitem{Lieb2001}
E.~H. Lieb and M.~Loss.
\newblock {\em Analysis}, volume~14 of {\em Graduate Studies in Mathematics}.
\newblock American Mathematical Society, Providence, RI, second edition, 2001.

\bibitem{Lorentz}
G.~G. Lorentz.
\newblock Some new functional spaces.
\newblock {\em Ann. of Math. (2)}, 51:37--55, 1950.

\bibitem{Mazya_Sobolev_Spaces}
V.~Maz'ya.
\newblock {\em Sobolev spaces with applications to elliptic partial differential equations}, volume 342 of {\em Grundlehren der mathematischen Wissenschaften [Fundamental Principles of Mathematical Sciences]}.
\newblock Springer, Heidelberg, augmented edition, 2011.

\bibitem{MaEv}
M.~Milman and E.~Pustylnik.
\newblock On sharp higher order {S}obolev embeddings.
\newblock {\em Commun. Contemp. Math.}, 6(3):495--511, 2004.

\bibitem{Peetre-1966}
J.~Peetre.
\newblock Espaces d'interpolation et th\'{e}or\`eme de {S}oboleff.
\newblock {\em Ann. Inst. Fourier (Grenoble)}, 16:279--317, 1966.

\bibitem{Tartar1998}
L.~Tartar.
\newblock Imbedding theorems of {S}obolev spaces into {L}orentz spaces.
\newblock {\em Boll. Unione Mat. Ital. Sez. B Artic. Ric. Mat. (8)}, 1(3):479--500, 1998.

\bibitem{Xiang2024}
M.~Xiang and Y.~Ma.
\newblock Existence and stability of normalized solutions for nonlocal double phase problems.
\newblock {\em J. Geom. Anal.}, 34(2):Paper No. 46, 29, 2024.

\bibitem{Yosida_Functional}
K.~Yosida.
\newblock {\em Functional analysis}.
\newblock Classics in Mathematics. Springer-Verlag, Berlin, 1995.
\newblock Reprint of the sixth (1980) edition.

\end{thebibliography}

		
\end{document}